\documentclass[11pt,a4paper,reqno]{amsart}
\usepackage{amsthm,amsmath,amsfonts,amssymb,xcolor,amsxtra,appendix,bookmark,dsfont,bm,mathrsfs}
\usepackage{hyperref}
\usepackage{algorithm}
\usepackage{algpseudocode}
\usepackage{enumitem}
\usepackage[latin1]{inputenc}
\usepackage{color} 
\usepackage{amssymb}\usepackage{graphicx}
\usepackage{tikz}
\usepackage{hyperref}
\theoremstyle{plain}
\newtheorem{theorem}{Theorem}
\newtheorem{lemma}[theorem]{Lemma}

\newtheorem{proposition}[theorem]{Proposition}

\theoremstyle{remark}
\newtheorem{remark}[theorem]{Remark}
\newtheorem{assumption}[theorem]{Assumption}

\allowdisplaybreaks

\title[A Neural-Spectral Method for Control Reconstruction in Wave Equations]{A Structure-Preserving Neural-Spectral Method for Reconstructing Controls of Wave Equations}

\author[T.-P.  Nguyen]{Tan-Phuc Nguyen}
\address{Department of Mathematics, Texas A\&M University, College Station, TX 77843, USA}
\email{tanphuc@tamu.edu}

\author[M.-B. Tran]{Minh-Binh Tran}
\address{Department of Mathematics, Texas A\&M University, College Station, TX 77843, USA}
\email{minhbinh@tamu.edu} 
\thanks{T.-P.  Nguyen, M.-B. T are  funded in part by     NSF CAREER  DMS-2303146, and NSF Grants DMS-2204795, DMS-2305523,  DMS-2306379.}

\author[S.  Tu]{Son  Tu}
\address{ Department of Mathematics, Baylor University, Waco, TX 76706, USA}
\email{Son\_Tu@baylor.edu} 

\begin{document}
\date{\today}
\begin{abstract}
The numerical reconstruction of controls for partial differential equations remains comparatively underdeveloped, despite the extensive analytical literature on controllability. This difficulty is particularly pronounced for wave equations, whose conservative structure, oscillatory dynamics, and high-frequency behavior make direct discretization and optimization challenging. In this work, we introduce a Neural-Spectral method for approximating controls of wave equations. The method represents both the state and the control in a Dirichlet spectral basis and parameterizes the time-dependent modal coefficients using shallow neural networks. In this way, the spatial oscillatory structure of the wave equation is built into the approximation, and the learning task is reduced to reconstructing temporal coefficients.
We prove approximation results showing that, under the standing assumption that an exact control exists in the relevant energy framework, the control-state pairs found can approximate exact controlled trajectories uniformly in time in the energy norm, while also approximating the corresponding controls in \(L^2\). We also state a conditional computable error estimate that separates spectral truncation, neural-network approximation, quadrature, and optimization errors. In addition, we discuss structural obstructions faced by standard time-stepping schemes for conservative wave dynamics: explicit Euler amplifies high frequencies, implicit Euler introduces artificial dissipation, and Crank--Nicolson preserves amplitudes but compresses high-frequency phases.
Numerical experiments in one, two, and three space dimensions illustrate the method on nonlinear, linear-reference, and high-dimensional control benchmarks. In a strongly nonlinear one-dimensional test, a Galerkin--Schauder HUM fixed-point implementation is observed to lose convergence for the parameter regime considered, while the Neural-Spectral method produces a forward-validated steering control. In two dimensions, we compare the method with a standard soft-constrained PINN on a GCC-compatible boundary-collar actuator, and we also include a linear Gramian reference benchmark in which the exact finite-dimensional minimum-energy control can be computed. This additional test shows that, when a classical Galerkin HUM reference is available, the Neural-Spectral formulation can recover a control close to the Gramian control and achieve small forward-validation terminal error. In three dimensions, the method is compared with a DeepONet-based forward surrogate. In the reported tests, the Neural-Spectral method attains smaller forward-validation terminal errors than these particular baselines. These results suggest that incorporating the spectral structure of the wave equation into the learning architecture can improve control reconstruction accuracy in the tested regimes. The DeepONet comparison also illustrates a complementary trade-off: the Neural-Spectral method is optimized separately for each control instance, whereas DeepONet can provide faster inference once its offline training cost has been amortized.
\end{abstract}
\maketitle
\tableofcontents
	
\section{Introduction}

The control theory of partial differential equations has been the subject of sustained and intensive investigation over the last several decades. For wave, heat, and dispersive equations, a large body of work has established exact, null, and approximate controllability via internal and boundary controls under a variety of geometric, analytic, and structural hypotheses; see, for instance, \cite{Coron2007, Lions1988, Zuazua2005, zuazua2006controllability} and the references therein. In the case of wave equations, the Geometric Control Condition (GCC) plays a central role: in multidimensional domains, exact controllability is closely tied to the requirement that every generalized ray of geometric optics enters the control region within the prescribed time interval~\cite{bardos1992sharp}. 

By contrast, the numerical reconstruction of controls remains much less developed. Even when controllability is established at the analytical level, it is often difficult to compute a reliable control that steers the system between prescribed states. Nonlinearities may also introduce loss of compactness, nonconvexity, stiffness, and instability in the numerical reconstruction procedure. Thus, while the existence of controls is well understood in many settings, the design of stable, accurate, and computationally tractable methods for approximating such controls remains a challenging and comparatively underdeveloped area.

A number of recent works have begun to address this issue from a constructive and numerical perspective. Least-squares, space-time, and fixed-point approaches have been developed for partial differential equations, including distributed and boundary control problems; see, for example, \cite{BhandariLemoineMunch2023, BottoisLemoineMunch2023, LemoineMarinGayteMunch2021, LemoineMunch2023, MunchTrelat2022}. These works show that numerical control reconstruction is a subtle problem even in low dimensions. In higher dimensions, the difficulty is compounded by the cost of mesh-based discretizations and by the need to resolve interactions among the control region, high-frequency modes, and the conservative structure of the equations.

Machine learning provides a natural complementary set of tools for this problem, although most existing scientific machine learning methods were not designed specifically for control reconstruction. Physics-informed neural networks (PINNs), introduced in \cite{Raissi2017PartI, Raissi2017PartII, Raissi2019}, approximate solutions of differential equations by training neural networks against PDE residuals, boundary conditions, and available data. PINNs have become a central tool in scientific machine learning for forward and inverse PDE problems, especially in data-sparse settings. However, the direct reconstruction of controls is a fundamentally different task: the control is an unknown input that must be chosen so that the corresponding state satisfies terminal steering constraints. Thus, the control problem imposes constraints not only on the state equation but also on the mechanism by which the unknown input enters the dynamics.

Several PINN variants have been proposed to address optimization difficulties, gradient flow stiffness, spectral bias, and gradient imbalance among different loss components; see, for example, \cite{DeRyckMishra2024, JagtapKawaguchiKarniadakis2020, JagtapKharazmiKarniadakis2020, McClennyBragaNeto2023, MishraMolinaro2023, RaissiAhmadiPerdikarisKarniadakis2024, ShinDarbonKarniadakis2020, WangTengPerdikaris2021}. These developments substantially improve the performance of PINNs for many forward and inverse problems. Nevertheless, they do not by themselves resolve the particular difficulty of control reconstruction, where one must recover an admissible control that steers the PDE dynamics to a prescribed terminal state and whose effect may be localized, bilinear, or strongly coupled to the state.
For connections between Control Theory and Machine Learning, we refer to the discussions in \cite{bensoussan2022machine}.

In the  work~\cite{KurbanovPhungTran2026}, the authors introduced WeightedPINN, an operator-decomposed PINN framework designed specifically for the numerical reconstruction of controls in nonlinear PDEs. The main idea of WeightedPINN is to assign separate adaptive weights to the different components of the controlled PDE residual, such as the time derivative, spatial differential operators, nonlinear response, and control term, as well as to the boundary, initial, and terminal constraints. This produces a control-aware residual metric that is better adapted to gradient imbalance at the operator level and to the mechanism through which the unknown control enters the equation. In this sense, WeightedPINN represents a viable attempt to adapt the PINN methodology to the control-reconstruction setting.

The present paper develops a different approach. Rather than modifying the space-time PINN loss, we exploit the spectral structure of the wave equation. The wave operator is skew-adjoint in the energy space, and its spectrum is purely imaginary. On rectangular domains, the corresponding Dirichlet eigenfunctions provide an explicit orthonormal modal basis. We use this structure to reduce the control reconstruction problem to the learning of time-dependent spectral coefficients. Both the state and the control are represented as finite sums of spatial eigenfunctions, while the modal coefficients are parameterized by one-dimensional neural networks in time. We call this a Neural-Spectral method.

This spectral viewpoint is especially natural for wave equations. The conservative dynamics are encoded mode by mode, and the projected controlled equation becomes a system of forced oscillators. The unknown control enters through the modal forcing coefficients, or, in the case of a localized actuator, through the corresponding actuator overlap matrix. Thus, the method does not ask a neural network to rediscover the spatial oscillatory structure of the wave equation. Instead, the spatial structure is built into the approximation, and the learning task is reduced to reconstructing the temporal coefficients that steer the retained modes.

The purpose of this paper is to develop and analyze a structure-preserving neural approximation framework for wave equation control reconstruction. The method is designed to exploit the modal structure of the wave equation and to provide a computationally useful control ansatz whose training loss is directly aligned with the projected controlled dynamics.

The main contributions of the paper are as follows.

\begin{itemize}[topsep=0pt, partopsep=0pt]
\item We formulate a Neural-Spectral framework for approximating controls of wave equations. The method represents the state and the control in the Dirichlet spectral basis and parameterizes the resulting temporal modal coefficients by shallow neural networks.

\item We prove approximation results for the Neural-Spectral method. Under the standing assumption that an exact control exists in the relevant energy framework, we show that the control can be approximated in \(L^2((0,T);\mathcal E)\). We then prove residual and solution approximation estimates showing that exact controlled trajectories can be approximated uniformly in time, in the energy norm, by Neural-Spectral control-state pairs.

\item We state a conditional computable error estimate that separates the principal numerical error sources: spectral truncation, neural-network approximation, quadrature or collocation error, and optimization error. This estimate is not a global convergence theorem for the nonconvex optimizer, but it clarifies how the computable training loss relates to the forward-validation error.

\item We identify a structural obstruction faced by standard time-stepping schemes for wave equations. Explicit Euler produces spurious amplification, implicit Euler introduces artificial dissipation, and Crank--Nicolson, although energy-preserving, compresses high-frequency phases and fails to converge to the continuous unitary wave group in the operator norm. This motivates a formulation that works directly at the level of spectral coefficients rather than relying on a direct time-stepping discretization of the full PDE.
\item We provide numerical experiments in one, two, and three space dimensions. In one dimension, we test a strongly nonlinear semilinear wave equation and compare with a Galerkin--Schauder HUM fixed-point implementation. In the reported strongly nonlinear regime, this HUM-S iteration does not converge, while the Neural-Spectral method produces a forward-validated steering control. In two dimensions, we compare against a standard soft-constrained PINN on a GCC-compatible boundary-collar actuator. We also add a linear Gramian reference benchmark, where the finite-dimensional minimum-energy Galerkin control is computed explicitly and used as a classical reference solution. This benchmark verifies that the Neural-Spectral formulation can recover a control close to the Galerkin HUM control in a setting where an exact finite-dimensional reference is available. In three dimensions, we compare against a DeepONet forward surrogate combined with gradient-based control inversion. In all cases, the recovered controls are evaluated by independent forward simulation of the projected dynamics, rather than only by the training loss. 
\end{itemize}

The numerical results illustrate the distinction between the present method and several natural computational baselines. In the one-dimensional strongly nonlinear test, a Galerkin--Schauder HUM fixed-point implementation loses its effective contraction property: the iterates fail to stabilize, and the resulting control does not steer the nonlinear semi-discrete dynamics accurately. By contrast, the Neural-Spectral method directly minimizes the projected residual and endpoint mismatch over a joint state-control parameterization; in the reported test, the training converges, and the recovered control gives a small forward-validation terminal error. This comparison should not be interpreted as a failure of HUM theory itself, but rather as evidence that the fixed-point implementation can become ineffective in strongly nonlinear regimes where a residual-based Neural-Spectral optimization remains useful.
In the two-dimensional comparison, the PINN is trained with PDE, boundary, initial, and terminal penalties in the physical variables \((t,x,y)\). For the architecture, weighting, and optimization schedule used in this benchmark, the recovered PINN control has small amplitude and the forward-simulation terminal error remains close to the free-evolution baseline. The Neural-Spectral method, by contrast, optimizes the projected modal dynamics used in the validation step and gives a smaller terminal error on this test. This comparison highlights the potential advantage of aligning the training loss with the finite-dimensional controlled dynamics, while leaving open the possibility that more specialized PINN variants or different penalty strategies could improve the PINN baseline.
In the three-dimensional comparison, the DeepONet experiment illustrates a cost--accuracy trade-off between per-instance optimization and amortized operator learning. The Neural-Spectral method is trained separately for each control problem and gives smaller forward-validated terminal errors on the two reported tests. DeepONet, on the other hand, shifts part of the computational cost to an offline training phase and therefore provides faster online inversion once the surrogate has been trained. The comparison should therefore be interpreted as evidence that the proposed method can provide higher per-instance accuracy in the tested setting, whereas operator-learning approaches may be preferable in many-query regimes where rapid online evaluation is more important than solving a single instance as accurately as possible.

The additional two-dimensional linear Gramian benchmark complements these comparisons by providing a classical non-machine-learning reference. In that test, the exact minimum-energy control of the projected Galerkin system is computed from the controllability Gramian, and the Neural-Spectral reconstruction is compared directly against this reference control. The results show close agreement with the Gramian/HUM control and small forward-validation terminal error. This benchmark should be viewed as a consistency check for the spectral control formulation in a setting where the finite-dimensional reference solution is available, rather than as a general superiority claim over all possible control solvers.
   
The paper is organized as follows. Section~\ref{sub:neural_spectral_framework} introduces the controlled wave equation, the energy formulation, the spectral decomposition, and the Neural-Spectral ansatz. We then prove the control, residual, and solution approximation results in Section~\ref{sub:approximation_results}. Section~\ref{sub:conditional_error_estimate} states the computable conditional error estimate. Section~\ref{sec:obstruction} discusses the obstruction created by standard time-stepping schemes for conservative wave dynamics. Finally, Section~\ref{sec:numerical_tests} presents the numerical experiments: a strongly nonlinear one-dimensional test, a two-dimensional comparison with a standard PINN, a two-dimensional linear Gramian reference benchmark, and a three-dimensional comparison with a DeepONet forward surrogate.

\section{Controlling  Wave Equations}

\subsection{The Neural-Spectral framework}
\label{sub:neural_spectral_framework}

Let \(\Omega=(0,1)^N\), and let \(\omega\subset \Omega\) be a nonempty open subset. For a given final time \(T>0\), we consider the following controlled wave equation with homogeneous Dirichlet boundary conditions:
\begin{equation}\label{Wave1}
	\begin{cases}
		\partial_{tt}{u} - \Delta u + g(u) + \alpha u + f\,\mathbf{1}_{\omega} = 0
		& \text{in } (0,T)\times \Omega, \\[1mm]
		u(t,x) = 0
		& \text{on } (0,T)\times \partial\Omega, \\[1mm]
		u(0,\cdot) = u^0,\quad \partial_t{u}(0,\cdot) = u^1
		& \text{in } \Omega .
	\end{cases}
\end{equation}

Here, \(u=u(t,x)\) denotes the state variable, and \(f=f(t,x)\) is an interior control acting on the subdomain \(\omega\), with \(\mathbf{1}_{\omega}\) denoting the characteristic function of \(\omega\). The constant \(\alpha \ge 0\) is real, and \(g:\mathbb{R}\to\mathbb{R}\) represents a nonlinear term. Subscripts in time denote differentiation with respect to \(t\). 
The objective is to construct an interior control \(f\), supported in
\(\omega\), that steers the solution from the prescribed initial state
\((u^0,u^1)\) to the prescribed terminal state \((u_T^0,u_T^1)\).

\begin{remark}
    The function \(g:\mathbb{R}\to\mathbb{R}\) induces a Nemytskii operator on the relevant function spaces, \(u(\cdot,t) \in H_0^1(\Omega) \mapsto g\left(u(\cdot,t)\right) \in L^2(\Omega)\). By a slight abuse of notation, when there is no risk of confusion, we shall not distinguish between the scalar function and its induced operator, denoting both by \(g\).
\end{remark}
\medskip
\noindent
\textbf{Control problem}.
Given initial data \(\left(u^0,u^1\right)\) and target data \(\left(u_T^0,u_T^1\right)\), find a control \(f\) such that
\begin{equation}\label{Wave2}
	u(T,\cdot) = u_T^0, \qquad \partial_t{u}(T,\cdot) = u_T^1
	\quad \text{in } \Omega .
\end{equation}

According to \cite{Zuazua_ExactControl}, under the assumption that \(\omega\) satisfies appropriate geometric conditions (e.g., being a neighborhood of a suitable part of the boundary) and \(T\) is sufficiently large, if the nonlinearity \(g\) is globally Lipschitz or satisfies an appropriate growth condition at infinity \bigg(e.g., \(\lim\limits_{|s|\to\infty} \dfrac{|g(s)|}{|s|\log^2|s|} = 0\) \cite{Zuazua1993}\bigg), the system \eqref{Wave1} is exactly controllable. That is, for any initial data
\[
\left(u^0,u^1\right)\in H_0^1(\Omega)\times L^2(\Omega)
\]
and any target data
\[
\left(u_T^0,u_T^1\right)\in H_0^1(\Omega)\times L^2(\Omega),
\]
there exists a control \(f\) such that \eqref{Wave2} holds. 
\begin{remark}\label{rem:regularity}
    Regarding the existence and regularity of the exact control \(f\), a sharp analytical distinction must be made between one-dimensional and multi-dimensional domains:
\begin{itemize}
    \item For the one-dimensional case (\(N=1\)), it has been rigorously proven that exact internal controllability holds with a control \(f \in L^2((0,T) \times \omega)\), provided the nonlinearity \(g(u)\) satisfies the critical logarithmic growth condition at infinity \cite{Zuazua1993}.
    \item However, in multi-dimensional spaces (\(N \ge 2\)), achieving exact controllability for the semi-linear wave equation with strictly \(L^2\)-controls remains an open problem. As explicitly noted in \cite[Remark 4.5]{Zuazua_ExactControl}, for globally Lipschitz nonlinearities, current fixed-point techniques only guarantee the existence of a control \(f \in H^{-\varepsilon}((0,T); L^2(\omega))\) for arbitrarily small \(\varepsilon > 0\) (specifically \(\varepsilon \in (0, 1/2)\)). The existence of a control strictly belonging to \(L^2((0,T) \times \omega)\) is not yet mathematically established in higher dimensions.
\end{itemize}

Although this leaves a theoretical gap in higher dimensions, the fact that \(H^{-\varepsilon}((0,T);L^2(\omega))\) is only slightly weaker than \(L^2((0,T)\times\omega)\), together with the constructive \(L^2\)-controllability results available in one dimension (see~\cite{Zuazua1993}), motivates the following standing assumption for the numerical part of the paper:	
\[
f\in L^2((0,T)\times\omega).
\]
\end{remark}

\begin{remark}[Geometric Control Condition]
    It is important to emphasize that the assumption of exact controllability for the multidimensional wave equation is intrinsically tied to the Geometric Control Condition (GCC). In the one-dimensional case (\(N=1\)), the exact controllability holds for any non-empty open subset \(\omega\) provided that \(T\) is sufficiently large. For instance, if \(\Omega = (0,1)\) and \(\omega = (l_1, l_2) \subset (0,1)\), the sharp condition is \(T > 2\max\{l_1, 1 - l_2\}\)~\cite[Theorem~1]{Zuazua1993}. However, the multi-dimensional framework (\(N \ge 2\)) strictly requires the pair \((\omega, T)\) to satisfy the Geometric Control Condition (GCC). Introduced in~\cite{bardos1992sharp}, the GCC states that every optical ray propagating in \(\Omega\) and reflecting on its boundary \(\partial\Omega\) must enter the control region \(\omega\) within the time interval \((0,T)\). As demonstrated by Bardos et al.~\cite{bardos1992sharp}, if there exists a generalized ray that never encounters the control region \(\omega\) within time \(T\) (a so-called "trapped ray"), the exact controllability to all states in the energy space cannot be achieved. Therefore, throughout this paper, we assume that \((\omega,T)\) satisfies the GCC.
\end{remark}

Introducing the auxiliary variable \(z = \partial_t{u}\), system \eqref{Wave1} can be rewritten as the first-order system
\begin{equation}\label{Wave3}
	\left\{
	\begin{aligned}
		&u' = z, \\
		&z' = \Delta u - \alpha u -g(u)- f\,\mathbf{1}_{\omega}, \\
		&u(0) = u^0,\quad z(0) = u^1 .
	\end{aligned}
	\right.
\end{equation}

Next, denoting \(\Phi = (u,z)\), \(G(\Phi) = (0,g(u))\),  \(\mathbf{F} = (0,f)\), and \(\Phi_0=(u^0,u^1),\) and \(\Phi_T=(u^0_T,u^1_T),\) system \eqref{Wave3} can be written in the abstract Cauchy form
\begin{equation}\label{Wave4}
	\left\{
	\begin{aligned}
		&\Phi' + A\Phi + \mathbf{F}\,\mathbf{1}_{\omega} +G(\Phi) = 0, \\
		&\Phi(0,\cdot) = \Phi_0,\ \ \  \Phi(T,\cdot) = \Phi_T .
	\end{aligned}
	\right.
\end{equation}

	We work in the energy space
	\[
	\mathcal E:=H_0^1(\Omega)\times L^2(\Omega),
	\]
	equipped with the inner product
	\[
	\left\langle (u_1,z_1),(u_2,z_2)\right\rangle_{\mathcal E}
	=
	\int_\Omega \left(\nabla u_1\cdot \overline{\nabla u_2}
	+\alpha u_1\overline{u_2}\right)\,dx
	+
	\int_\Omega z_1\overline{z_2}\,dx .
	\]
    Throughout this section, complex Hilbert inner products are taken to be linear in the first argument and conjugate-linear in the second. We define the operator \(A:D(A)\subset \mathcal E\to \mathcal E\) by
	\[
	D(A):=\left(H^2(\Omega)\cap H_0^1(\Omega)\right)\times H_0^1(\Omega),
	\]
	and
	\[
	A(u,z):=
	\begin{pmatrix}
		-z\\
		-\Delta u+\alpha u
	\end{pmatrix}.
	\]
	With this realization, \(A\) is skew-adjoint on \(\mathcal E\) (see \cite[Proposition~2.6.9]{cazenave1998semi}).

\begin{lemma}[Spectrum of \(A\)]\label{lem:spectrum_A}
	The operator \(A\) has a purely imaginary point spectrum. For each multi-index \(k = (k_1, \dots, k_N) \in (\mathbb{N}^*)^N\), the eigenvalues of \(A\) and the associated normalized eigenfunctions are given by
	\begin{align*}
    \lambda_k^\pm &= \pm \mathbf{i} \sqrt{\sum_{j=1}^N k_j^2 \pi^2 + \alpha}. \\
    \phi_k^\pm(x) &= 2^{\frac{N-1}{2}} \begin{pmatrix} \dfrac{1}{\lambda_k^\pm} \medskip\\ -1 \end{pmatrix}^T \prod_{j=1}^N \sin(k_j \pi x_j), \quad x = (x_1, \dots, x_N) \in \Omega.
    \end{align*}
	Moreover, the family \(\left\{ \phi_k^+, \phi_k^- \right\}_{k \in (\mathbb{N}^*)^N}\) forms an orthonormal basis of
	\(H_0^1(\Omega) \times L^2(\Omega)\).
\end{lemma}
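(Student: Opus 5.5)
The plan is to reduce everything to the Dirichlet Laplacian on \(\Omega=(0,1)^N\). Write \(\psi_k(x)=\prod_{j=1}^N\sin(k_j\pi x_j)\) and \(\mu_k=\pi^2\sum_j k_j^2\). Two standard facts will be used: \(\{2^{N/2}\psi_k\}_{k}\) is an orthonormal basis of \(L^2(\Omega)\) consisting of eigenfunctions of \(-\Delta\) with eigenvalues \(\mu_k\), and the same family is orthogonal and complete in \(H_0^1(\Omega)\) for the inner product \(\int(\nabla u\cdot\overline{\nabla v}+\alpha u\overline v)\). The latter holds because \(\langle\psi_k,v\rangle_{H_0^1}=(\mu_k+\alpha)\langle\psi_k,v\rangle_{L^2}\) by integration by parts.

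First I would solve the eigenvalue equation \(A(u,z)=\lambda(u,z)\) directly. The first component gives \(z=-\lambda u\). Substituting into the second gives \(-\Delta u+\alpha u=-\lambda^2u\), so \(u\) is a Dirichlet eigenfunction and \(-\lambda^2=\mu_k+\alpha>0\). Since \(\mu_k>0\) and \(\alpha\ge0\), this yields \(\lambda=\lambda_k^\pm=\pm\mathbf i\sqrt{\mu_k+\alpha}\), with eigenvector \((u,z)=c(\psi_k/\lambda,-\psi_k)\). This is exactly the stated form.

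Next comes the normalization. Because \(|\lambda_k^\pm|^2=\mu_k+\alpha\), we get \(\|\psi_k/\lambda\|_{H_0^1}^2=\|\psi_k\|_{L^2}^2=2^{-N}\), so \(\|\phi\|_{\mathcal E}^2=2c^2 2^{-N}\). Setting this equal to \(1\) gives \(c=2^{(N-1)/2}\).

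For orthogonality, modes with \(k\neq k'\) are orthogonal componentwise by the orthogonality of the \(\psi_k\) in both \(L^2\) and \(H_0^1\). For the same \(k\) with opposite signs, the inner product is
\[
c^2\Big(\tfrac{\mu_k+\alpha}{\lambda_k^+\overline{\lambda_k^-}}+1\Big)2^{-N}.
\]
Since \(\lambda_k^+\overline{\lambda_k^-}=-(\mu_k+\alpha)\), this equals \(c^2(-1+1)2^{-N}=0\), using the conjugate-linear convention fixed in the paper.

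Completeness is the step needing the most care. Suppose \((u,z)\in\mathcal E\) is orthogonal to all \(\phi_k^\pm\). Then
\[
\overline{(1/\lambda_k^\pm)}\,\langle u,\psi_k\rangle_{H_0^1}-\langle z,\psi_k\rangle_{L^2}=0
\quad\text{for both signs.}
\]
Since \(1/\lambda_k^+\neq1/\lambda_k^-\), this \(2\times2\) system forces \(\langle u,\psi_k\rangle_{H_0^1}=0\) and \(\langle z,\psi_k\rangle_{L^2}=0\) for all \(k\). Hence \(u=0\) and \(z=0\) by completeness of the sines in each space.

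Finally, the spectral claim. Since \(D(A)\) embeds compactly into \(\mathcal E\) by Rellich, \(A\) has compact resolvent, so its spectrum consists only of eigenvalues. They are purely imaginary by skew-adjointness, which the paper cites from Cazenave--Haraux, and the computation above shows they are exactly the \(\lambda_k^\pm\). Alternatively, purity of the point spectrum follows directly from the orthonormal eigenbasis: \(A\) is unitarily diagonal, so \(\sigma(A)\) is the closure of \(\{\lambda_k^\pm\}\), which is already closed because \(|\lambda_k^\pm|\to\infty\).
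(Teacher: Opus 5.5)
Your proof is correct. The eigenvalue computation, the normalization, and the orthogonality check match the paper's. The only difference there is cosmetic: you work with the unnormalized \(\psi_k\) and get \(c=2^{(N-1)/2}\) directly, while the paper uses \(e_k=2^{N/2}\psi_k\) and \(C=1/\sqrt2\).

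The genuine difference is the completeness step. The paper passes to the self-adjoint operator \(B=\mathbf{i}A\), notes that \(A^{-1}\) is compact, and invokes the spectral theorem for self-adjoint operators with compact resolvent. You instead argue by annihilators:
\begin{itemize}
\item orthogonality to both \(\phi_k^+\) and \(\phi_k^-\) gives a \(2\times2\) linear system with nonzero determinant, since \(\lambda_k^+\neq\lambda_k^-\);
\item this forces \(\langle u,\psi_k\rangle_{H_0^1}=\langle z,\psi_k\rangle_{L^2}=0\);
\item the identity \(\langle\psi_k,v\rangle_{H_0^1}=(\mu_k+\alpha)\langle\psi_k,v\rangle_{L^2}\) transfers the \(L^2\)-completeness of the product sines to \(H_0^1(\Omega)\).
\end{itemize}

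Your route is more elementary, and it directly certifies that the explicitly listed family is the basis. The abstract theorem only says that the eigenvectors of \(B\) span a dense subspace. To conclude that \(\{\phi_k^\pm\}\) itself is complete, one also needs these vectors to span every eigenspace of \(A\), which matters when \(\mu_k\) is degenerate. That fact again rests on the completeness of the product sines in \(L^2(\Omega)\), which the paper uses only implicitly. You use that fact once and avoid the abstract machinery altogether.

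The paper's route is more structural. Its compact-resolvent observation is exactly what your final paragraph uses to show that \(\sigma(A)=\{\lambda_k^\pm\}\) consists only of eigenvalues. The paper's proof does not make that point explicit.
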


\begin{proof}
    To determine the eigenvalues of \(A\), we need to find \(\lambda \in \mathbb{C}\) such that the equation \(A\phi=\lambda\phi^T\) has at least one non-zero solution \(\phi=(u,z)\in H_0^1(\Omega) \times L^2(\Omega)\). From the definition of \(A\), this equation is equivalent to
    \begin{equation*}
	\left\{
	\begin{aligned}
		&-z = \lambda u, \\
		&-\Delta{u}+\alpha u = \lambda z.
	\end{aligned}
	\right.
    \end{equation*}
    It implies that
    \begin{equation}\label{Spectrum1}
        \left\{
	\begin{aligned}
		&\Delta{u}=\left(\lambda^2+\alpha\right)u, \\
		& u(x)=0 \text{ on } \partial\Omega.
	\end{aligned}
	\right.
    \end{equation}
    Note that \(\Omega=(0,1)^N\). Using the method of separation of variables on the open hypercube \(\Omega = (0,1)^N\), the eigenvalues of the Dirichlet Laplacian \(-\Delta\) are known to be \(\mu_k =  \displaystyle\sum_{j=1}^N k_j^2\pi^2\) for \(k=(k_1,k_2,\ldots,k_N) \in (\mathbb{N}^*)^N\), with corresponding \(L^2\)-orthonormal eigenfunctions \(e_k(x) = 2^{\frac{N}{2}}\displaystyle\prod_{j=1}^N \sin(k_j \pi x_j)\). From \eqref{Spectrum1}, we must have \(\lambda^2 + \alpha = - \mu_k\), which implies \(\lambda^2 = -(\mu_k + \alpha)\). Since \(\mu_k > 0\) and \(\alpha \ge 0\), we obtain two purely imaginary eigenvalues for each \(k \in (\mathbb{N}^*)^N\) as follows
    \begin{equation*}
    \lambda_k^\pm = \pm \mathbf{i} \sqrt{\mu_k + \alpha} = \pm \mathbf{i} \sqrt{\sum_{j=1}^N k_j^2\pi^2 + \alpha}.
    \end{equation*}
    The corresponding eigenvectors of \(A\) take the form \(\phi_k^\pm(x) = C \begin{pmatrix} \dfrac{1}{\lambda_k^\pm} \medskip\\ -1 \end{pmatrix}^T e_k(x)\), where \(C\) is a complex normalization constant. Let us compute the norm of \(\phi_k^\pm\) in \(H_0^1(\Omega) \times L^2(\Omega)\).
    \begin{equation*}
    \left\|\phi_k^\pm\right\|_{H_0^1(\Omega) \times L^2(\Omega)}^2 = |C|^2 \left( \frac{1}{\left|\lambda_k^\pm\right|^2} \|e_k\|_{H_0^1(\Omega)}^2 + \|e_k\|_{L^2(\Omega)}^2 \right).
    \end{equation*}
    By the definition of the norm on \(H_0^1(\Omega)\), using Green's first identity, and taking into account that \(e_k|_{\partial\Omega} = 0\), we have
    \begin{align*}
    \|e_k\|_{H_0^1(\Omega)}^2 = \int_\Omega |\nabla e_k|^2 dx + \alpha \int_\Omega |e_k|^2 dx = \int_\Omega (-\Delta e_k) e_k dx + \alpha \|e_k\|_{L^2(\Omega)}^2.
    \end{align*}
    Since \(e_k\) is the eigenfunction of the Dirichlet Laplacian \(-\Delta\) associated with the eigenvalue \(\mu_k\), we have \(-\Delta e_k = \mu_k e_k\). Substituting this relation into the integral yields
    \begin{equation}\label{H_0^1-norm-e_k}
    \|e_k\|_{H_0^1(\Omega)}^2 = \mu_k \int_\Omega |e_k|^2 dx + \alpha \|e_k\|_{L^2(\Omega)}^2 = (\mu_k + \alpha) \|e_k\|_{L^2(\Omega)}^2=\left|\lambda_k^\pm\right|^2\|e_k\|_{L^2(\Omega)}^2=\left|\lambda_k^\pm\right|^2.
    \end{equation}
    Thus, \(\left\|\phi_k^\pm\right\|_{H_0^1(\Omega) \times L^2(\Omega)}^2 = 2|C|^2\).
    Setting \(\|\phi_k^\pm\|_{H_0^1(\Omega) \times L^2(\Omega)} = 1\) requires \(|C| = \dfrac{1}{\sqrt{2}}\). Choosing \(C = \dfrac{1}{\sqrt{2}}\) yields the normalized eigenfunctions \(\phi_k^\pm(x)\) as follows
    \begin{equation*}
    \phi_k^\pm(x) = \dfrac{1}{\sqrt{2}} \begin{pmatrix} \dfrac{1}{\lambda_k^\pm} \medskip\\ -1 \end{pmatrix}^T e_k(x) = 2^{\frac{N-1}{2}} \begin{pmatrix} \dfrac{1}{\lambda_k^\pm} \medskip\\ -1 \end{pmatrix}^T \prod_{j=1}^N \sin(k_j \pi x_j), \quad x = (x_1, \dots, x_N) \in \Omega.
    \end{equation*}
    Next, we prove that this family is orthonormal. For \(k \neq l \in (\mathbb{N}^*)^N\), the orthogonality \(\langle \phi_k^p, \phi_l^q \rangle_{H_0^1(\Omega) \times L^2(\Omega)} = 0\) (for \(p, q \in \{+, -\}\)) follows immediately from the orthogonality of the trigonometric polynomials \(e_k\) and \(e_l\) in both \(L^2(\Omega)\) and \(H_0^1(\Omega)\). 
    For the same mode \(k\) but opposite branches (\(+\) and \(-\)), we evaluate their inner product
    \begin{align*}
    \langle \phi_k^+, \phi_k^- \rangle_{H_0^1(\Omega) \times L^2(\Omega)} &= \dfrac{1}{2} \left[ \left\langle \frac{1}{\lambda_k^+} e_k, \frac{1}{\lambda_k^-} e_k \right\rangle_{H_0^1(\Omega)} + \langle -e_k, -e_k \rangle_{L^2(\Omega)} \right] \nonumber \\
    &= \dfrac{1}{2} \left[ \frac{1}{\lambda_k^+} \overline{\left(\frac{1}{\lambda_k^-}\right)} \|e_k\|_{H_0^1(\Omega)}^2 + \|e_k\|_{L^2(\Omega)}^2 \right].
    \end{align*}
    Since \(\lambda_k^- = \overline{\lambda_k^+}\), we have \(\overline{(1/\lambda_k^-)} = 1/\lambda_k^+\). Hence, the coefficient in the first term is \(1/\left(\lambda_k^+\right)^2\). Knowing that \(\left(\lambda_k^+\right)^2 = -(\mu_k + \alpha)\) and \(\|e_k\|_{H_0^1(\Omega)}^2 = (\mu_k + \alpha)\|e_k\|_{L^2(\Omega)}^2\), we obtain
    \begin{equation*}
    \langle \phi_k^+, \phi_k^- \rangle_{H_0^1(\Omega) \times L^2(\Omega)} = \dfrac{1}{2} \left[ - \|e_k\|_{L^2(\Omega)}^2 + \|e_k\|_{L^2(\Omega)}^2 \right] = 0.
    \end{equation*}
    Thus, the family \(\left\{ \phi_k^+, \phi_k^- \right\}_{k \in (\mathbb{N}^*)^N}\) is orthonormal. Since \(A\) is skew-adjoint on \(\mathcal E\), the operator \(B:=iA\) is
    	self-adjoint. Moreover, \(A^{-1}\), and therefore \(B^{-1}\), is compact,
    	as a consequence of the compact embedding
    	\(H_0^1(\Omega)\hookrightarrow L^2(\Omega)\) (see, e.g., \cite[Theorem~9.16]{brezis2011functional}). Hence, \(B\) has compact
    	resolvent. The spectral theorem for self-adjoint operators with compact resolvent \cite[Theorem~6.11]{brezis2011functional} implies that their eigenfunctions form a complete orthonormal basis
    	of \(\mathcal E\). Since the eigenfunctions of \(B\) and \(A\) coincide, the
    	family
    	\[
    	\{\phi_k^+,\phi_k^-\}_{k\in(\mathbb N^*)^N}
    	\]
    	forms a complete orthonormal basis of
    	\(H_0^1(\Omega)\times L^2(\Omega)\).
\end{proof}

To make the spectral representation compatible with neural operator learning frameworks, it is natural and convenient to represent the states using a one-dimensional sequence rather than a complex multi-index. We establish this rigorous transition in the following lemma. The proof of this lemma is presented in Appendix \ref{app:proof_lemma_2}.
\begin{lemma}[1D spectral re-indexing] \label{lem:reindexing}
There exists a bijection \(\kappa : \mathbb{N}^* \to (\mathbb{N}^*)^N \times \{+, -\}\) that systematically re-indexes the multi-dimensional eigenvalues \(\lambda_k^\pm\) and eigenfunctions \(\phi_k^\pm(x)\) obtained in Lemma \ref{lem:spectrum_A} into one-dimensional sequences \((\lambda_n)_{n \in \mathbb{N}^*}\) and \((\phi_n)_{n \in \mathbb{N}^*}\). 
\end{lemma}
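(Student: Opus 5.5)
The plan is to observe that the index set \(\mathcal I:=(\mathbb{N}^*)^N\times\{+,-\}\) is countably infinite, and then to choose a bijection with \(\mathbb{N}^*\) that respects the spectral structure, namely one that orders modes by nondecreasing \(|\lambda_k^\pm|\). Countability alone gives some bijection at once, since \((\mathbb{N}^*)^N\) is a finite product of countable sets and \(\{+,-\}\) is finite. For numerical truncation, however, the enumeration should list low frequencies first, so that \(\{\phi_n\}_{n\le M}\) corresponds to a spectral cutoff. I would therefore construct \(\kappa\) explicitly.

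First, for each \(k\in(\mathbb{N}^*)^N\), set \(\mu_k=\pi^2|k|^2\), so that \(|\lambda_k^\pm|=\sqrt{\mu_k+\alpha}\) depends only on \(|k|^2=\sum_j k_j^2\). The key finiteness fact is that for each \(R>0\) the set \(\{k:|k|^2\le R\}\) is finite, since it is contained in \(\{1,\dots,\lfloor\sqrt R\rfloor\}^N\). Hence \(\mathcal I\) is partitioned into the finite shells \(S_m:=\{(k,\pm):|k|^2=m\}\), \(m\in\mathbb{N}^*\). Only finitely many shells are empty below any level, and infinitely many shells are nonempty; for instance \(k=(j,1,\dots,1)\) lies in the shell with \(m=j^2+N-1\).

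Second, inside each shell I fix a tie-breaking order: lexicographic order on \(k\), with \(+\) placed before \(-\). Concatenating the shells in increasing \(m\) then gives a total order on \(\mathcal I\) in which every element has only finitely many predecessors. I would then define \(\kappa^{-1}(k,\pm):=1+\#\{\text{predecessors of }(k,\pm)\}\). This map is injective because the order is total. It is surjective onto \(\mathbb{N}^*\) because \(\mathcal I\) is infinite and every initial segment is finite, so the ranks are exactly \(1,2,3,\dots\). Setting \(\lambda_n:=\lambda_{\kappa(n)}\) and \(\phi_n:=\phi_{\kappa(n)}\) then yields the sequences. By Lemma \ref{lem:spectrum_A}, \((\phi_n)\) is a complete orthonormal basis of \(\mathcal E\), because orthonormality and completeness are invariant under reindexing by a bijection. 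In addition, \(|\lambda_n|\) is nondecreasing in \(n\).

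The only mildly delicate step is justifying that the rank map is onto \(\mathbb{N}^*\), which amounts to showing that the order type is \(\omega\). This rests entirely on the finiteness of the shells and on the absence of a largest element. I would make it explicit through the cumulative counts \(C_m:=\sum_{m'\le m}\#S_{m'}\): these are finite, nondecreasing, and tend to infinity. Consequently every \(n\in\mathbb{N}^*\) satisfies \(C_{m-1}<n\le C_m\) for a unique \(m\), and is attained within \(S_m\).
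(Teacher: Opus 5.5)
Your proposal is correct and is essentially the paper's argument: the paper also partitions $(\mathbb{N}^*)^N$ into the finite shells $S_m=\{k:\sum_j k_j^2=m\}$, orders each shell lexicographically, and uses the cumulative counts $n_m\to\infty$ (via $(r,1,\dots,1)\in S_{r^2+N-1}$) to build an explicit bijection $q$; it then sets $\kappa(2j-1)=(q(j),+)$ and $\kappa(2j)=(q(j),-)$, whereas you attach the signs to the shells directly and use a predecessor-count rank map. The two enumerations coincide only if your tie-break is $k$-major, with the sign as secondary key, rather than all $+$ entries of a shell before all $-$ entries; you should state that reading explicitly, because the paper later relies on $\phi_{2n-1},\phi_{2n}$ sharing the same $e_{q(n)}$, for example in $\phi_{2j-1}+\phi_{2j}=(0,-\sqrt{2}\,e_j)$.
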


\begin{remark}
	The proof of Lemma~\ref{lem:spectrum_A} shows a direct correspondence between the spectrum of the first-order wave operator \(A\) on \(H_0^1(\Omega)\times L^2(\Omega)\) and the spectrum of the Dirichlet Laplacian \(-\Delta\) on \(L^2(\Omega)\). Indeed, each vector-valued eigenfunction \(\phi_k^\pm\in H_0^1(\Omega)\times L^2(\Omega)\) associated with the eigenvalue \(\lambda_k^\pm\) of \(A\) is obtained from the scalar Dirichlet eigenfunction \(e_k\) associated with the eigenvalue \(\mu_k\) through
	\[
	\lambda_k^\pm=\pm \mathbf{i}\sqrt{\mu_k+\alpha},
	\qquad
	\phi_k^\pm(x)
	=
	\frac1{\sqrt2}
	\begin{pmatrix}
		\dfrac{1}{\lambda_k^\pm}\\[5mm]
		-1
	\end{pmatrix}^T
	e_k(x),
	\qquad k\in(\mathbb N^*)^N. 
	\]
    After re-indexing as in the proof of Lemma~\ref{lem:reindexing} (see Appendix~\ref{app:proof_lemma_2}), this relation becomes
    \begin{align*}
	&\lambda_{2n-1}= \lambda_{q(n)}^+ = \mathbf{i}\sqrt{\mu_n+\alpha},
	\qquad \qquad \qquad \qquad\;\;\lambda_{2n}= \lambda_{q(n)}^- = -\mathbf{i}\sqrt{\mu_n+\alpha} \\
	&\phi_{2n-1}
	= \phi_{q(n)}^+=
	\frac1{\sqrt2}
	\begin{pmatrix}
		\dfrac{1}{\lambda_{2n-1}}\\[5mm]
		-1
	\end{pmatrix}^T
	e_{n}(x), \qquad \phi_{2n}
	= \phi_{q(n)}^-=
	\frac1{\sqrt2}
	\begin{pmatrix}
		\dfrac{1}{\lambda_{2n}}\\[5mm]
		-1
	\end{pmatrix}^T
	e_{n}(x),
	\qquad n\in\mathbb N^*,
	\end{align*}
    where \(q\) is defined by \eqref{def:q} in Appendix~\ref{app:proof_lemma_2}. Here, to shorten the mathematical expressions, we have identified
    \begin{equation*}
        \mu_{n}:=\mu_{q(n)}, \qquad e_{n} := e_{q(n)}.
    \end{equation*}
\end{remark}

With the one-dimensional re-indexing \((\lambda_n)_{n\in\mathbb{N}^*}\), \((\phi_n)_{n\in\mathbb{N}^*}\) established above, we now formulate the learning problem. The key idea is to represent both the state \(\Phi\) and the control \(\mathbf{F}\,\mathbf{1}_{\omega}\) as finite linear combinations of eigenfunctions of \(A\), with time-dependent coefficients parameterized by neural networks. Specifically, for a truncation order \(P\), we project the problem onto the finite-dimensional subspace
\begin{equation}\label{Wave7}
  \mathscr{V}_P := \operatorname{span}\left\{\phi^+_{q(j)},\, \phi^-_{q(j)} 
  : j = 1, \ldots, P\right\} = \operatorname{span}\left\{\phi_{2j-1},\, \phi_{2j} : j = 1, \ldots, P\right\} = \{\phi_i\}_{i=1}^{2P} .
\end{equation}
	We seek an approximate solution of the form \(\widetilde{\Phi}_{\theta_a} = \left(\widetilde{u}_{\theta_a}, \partial_t\widetilde{u}_{\theta_a}\right)\), where
	\[
	\widetilde{u}_{\theta_a}(t,x)
	=
	\sum_{j=1}^{P} a_j(t;\theta_a)e_j(x).
	\]
	where
    \[
    a_j(t;\theta_a)=A_j^a\cdot \sigma\left(W_j^a t+B_j^a\right),\qquad A_j^a,W_j^a,B_j^a\in\mathbb R^{m_a}, \qquad j=1,\ldots,P.
    \]
    Similarly, we approximate the control \(\mathbf{F}\,\mathbf{1}_{\omega}=\left(0,f\,\mathbf{1}_\omega\right)\) by \(\widetilde{\mathbf{F}}_{\theta_c}\,\mathbf{1}_{\omega}=\left(0,\widetilde{f}_{\theta_c}\mathbf{1}_\omega\right)\), where
    \[
    \widetilde f_{\theta_c}(t,x)
    =\sum_{j=1}^{P} c_j(t;\theta_c)e_j(x),
    \]
    and
\[
c_j(t;\theta_c)
=
A_j^c\cdot \sigma\left(W_j^c t+B_j^c\right),
\qquad
A_j^c,W_j^c,B_j^c\in\mathbb R^{m_c}, \qquad j=1,\ldots,P.
\]
Here \(\sigma\) acts component-wise and ``\(\cdot\)'' denotes the Euclidean
inner product. Thus
\[
\theta_a
=
\left(A_i^{a},W_i^{a},B_i^{a}\right)_{i=1}^{P}
\in (\mathbb R^{m_a})^{3P},
\qquad
\theta_c
=
\left(A_j^c,W_j^c,B_j^c\right)_{j=1}^{P}
\in (\mathbb R^{m_c})^{3P}.
\]
The full parameter vector is then
\[
\theta=(\theta_a,\theta_c).
\]
Accordingly, the state and control approximations are written as
\[
\widetilde \Phi_{\theta_a}(t,x)
=
\sum_{j=1}^{P}{\left(a_j(t;\theta_a), a_j'(t;\theta_a)\right)e_j(x)},
\]
and
\[
\widetilde{\mathbf{F}}_{\theta_c}(t,x)\,\mathbf{1}_\omega(x)
=
\left(0,\,\mathbf{1}_\omega(x)\sum_{j=1}^{P} c_j(t;\theta_c)e_j(x)\right).
\]
Suppose that \(\widetilde \Phi_{\theta}(t,x)
=\displaystyle\sum_{j=1}^{P}{\left(a_j(t;\theta_a), a_j'(t;\theta_a)\right)e_j(x)}\) is a solution of \eqref{Wave4} projected onto \(\mathscr{V}_P\) and controlled by \(\widetilde{\mathbf{F}}_{\theta_c}\). Restricting the problem \eqref{Wave4} to \(\mathscr{V}_P\), taking the \(\mathcal E\)-inner product of \eqref{Wave4} with \(\phi_i\) and noting that 
\begin{equation*}
    \left\langle A\widetilde{\Phi}_{\theta}(t,\cdot), \phi_i \right\rangle = \left\langle \widetilde{\Phi}_{\theta}(t,\cdot), A^*\phi_i \right\rangle = \left\langle  \widetilde{\Phi}_{\theta}(t,\cdot), -A\phi_i\right\rangle = \left\langle  \widetilde{\Phi}_{\theta}(t,\cdot), -\lambda_i\phi_i\right\rangle = -\overline{\lambda_i}\left\langle \widetilde{\Phi}_{\theta}(t,\cdot), \phi_i \right\rangle,
\end{equation*}
we arrive at
\begin{align*}
    \left\langle \widetilde{\Phi}'_{\theta}(t,\cdot), \phi_i\right\rangle +\lambda_i\left\langle \widetilde{\Phi}_{\theta}(t,\cdot), \phi_i \right\rangle
	+ \left\langle G\left(\widetilde{\Phi}_{\theta}(t,\cdot)\right),\phi_i\right\rangle_{\mathcal E} + 
	\left\langle\widetilde{\mathbf{F}}_{\theta_c}(t,\cdot),\phi_i\right\rangle_{\mathcal E}=0,\quad t \in (0,T),\quad i=1,\ldots, 2P.
\end{align*} 
Note that we have used \(\overline{\lambda_i}=-\lambda_i\), which follows from \(\lambda_i \in \mathbf{i}\mathbb{R}\). Moreover, observe that
\begin{align}
    \left\langle \widetilde \Phi_{\theta}(t,\cdot), \phi_{2j-1} \right\rangle_{\mathcal{E}} = \dfrac{1}{\sqrt{2}}\left(\dfrac{a_j(t;\theta)}{\overline{\lambda_{2j-1}}}\|e_j\|^2_{H_0^1(\Omega)}-a_j'(t;\theta)\|e_j\|^2_{L^2(\Omega)}\right) = \dfrac{-1}{\sqrt{2}}\left(a_j'(t;\theta)-\mathbf{i}\sqrt{\mu_j+\alpha}\,a_j(t;\theta)\right) \label{phi-inner-odd}\\
    \left\langle \widetilde \Phi_{\theta}(t,\cdot), \phi_{2j} \right\rangle_{\mathcal{E}} = \dfrac{1}{\sqrt{2}}\left(\dfrac{a_j(t;\theta)}{\overline{\lambda_{2j}}}\|e_j\|^2_{H_0^1(\Omega)}-a_j'(t;\theta)\|e_j\|^2_{L^2(\Omega)}\right) = \dfrac{-1}{\sqrt{2}}\left(a_j'(t;\theta)+\mathbf{i}\sqrt{\mu_j+\alpha}\,a_j(t;\theta)\right). \label{phi-inner-even}
\end{align}
Furthermore, since \(G\left(\widetilde{\Phi}_{\theta}\right)=\left(0,g\left(\widetilde{u}_{\theta}\right)\right)\),
\(\widetilde{\mathbf{F}}_{\theta_c}=\left(0,\widetilde{f}_{\theta_c}\right)\), we have

\begin{align*}
    \left\langle G\left(\widetilde{\Phi}_{\theta}(t,\cdot)\right),\phi_{2j-1}\right\rangle_{\mathcal E}  &= \left\langle G\left(\widetilde{\Phi}_{\theta}(t,\cdot)\right),\phi_{2j}\right\rangle_{\mathcal E}=\dfrac{-1}{\sqrt{2}}\left\langle g\left(\widetilde{u}_{\theta}(t,\cdot)\right),e_j\right\rangle_{L^2(\Omega)} \\
    & = \dfrac{-1}{\sqrt{2}}\left\langle g\left(\sum_{\ell=1}^{P} a_\ell(t;\theta_a)e_\ell\right),e_j\right\rangle_{L^2(\Omega)}
\end{align*}
and
\begin{align*}
    \left\langle\widetilde{\mathbf{F}}_{\theta_c}(t,\cdot)\,\mathbf{1}_\omega,\phi_{2j-1}\right\rangle_{\mathcal E}&=\left\langle\widetilde{\mathbf{F}}_{\theta_c}(t,\cdot)\,\mathbf{1}_\omega,\phi_{2j}\right\rangle_{\mathcal E}=\dfrac{-1}{\sqrt{2}}\left\langle\widetilde{f}_{\theta_c}(t,\cdot)\,\mathbf{1}_\omega,e_j\right\rangle_{L^2(\Omega)}\\ 
    &=\dfrac{-1}{\sqrt{2}}\left\langle\mathbf{1}_\omega\,\sum_{\ell=1}^{P} c_\ell(t;\theta_c)e_\ell,e_j\right\rangle_{L^2(\Omega)}=\dfrac{-1}{\sqrt{2}}\sum_{\ell=1}^P M_{j\ell}\,c_\ell(t;\theta_c),
\end{align*}
where
\[
M_{j\ell} = \int_\omega e_j(x)e_\ell(x)\,dx.
\]
Therefore, considering the two cases \(i=2j-1\) and \(i=2j\), we obtain the same system as follows
\begin{align}\label{Wave8}
    a_j''(t;\theta)+(\mu_j+\alpha)a_j(t;\theta)
	+\left\langle g\left(\sum_{\ell=1}^{P} a_\ell(t;\theta)e_\ell\right),e_j\right\rangle_{L^2(\Omega)} +\sum_{\ell=1}^P M_{j\ell}\,c_\ell(t;\theta_c)=0,\notag\\ 
    t \in (0,T),\,j=1,\ldots, P.
\end{align}

However, since the neural network \(a_j(t;\theta)\) is determined by the parameters \(\theta\) rather than by solving \eqref{Wave8}, the left-hand side of \eqref{Wave8} does not vanish in general. For each \(t \in (0,T)\) and \(j=1,\ldots, P\), we define the residual \(R_j(t;\theta)\) as this remaining quantity
\begin{align}\label{Wave9}
&R_j(t;\theta)=a_j''(t;\theta)+(\mu_j+\alpha)a_j(t;\theta)
	+\left\langle g\left(\sum_{\ell=1}^{P} a_\ell(t;\theta)e_\ell\right),e_j\right\rangle_{L^2(\Omega)} +\sum_{\ell=1}^P M_{j\ell}\,c_\ell(t;\theta_c).
\end{align}
The goal is to find \(\theta\) such that \(R_j(t;\theta) \approx 0\) for all \(j=1,\ldots, P\) and the terminal condition \(\widetilde{\Phi}_\theta(T,\cdot) \approx \Phi_T\) is satisfied. This motivates us to minimize
\begin{equation}\label{Wave10}
	\min_\theta
	\left[
	\sum_{j=1}^P \|R_j(\cdot;\theta)\|_{L^2(0,T)}^2
	+
	\left\|
	\sum_{j=1}^P \left(a_j(0;\theta), a_j'(0;\theta)\right)e_j-\Phi_0
	\right\|_{\mathcal E}^2
	+
	\left\|
	\sum_{j=1}^P \left(a_j(T;\theta),a_j'(T;\theta)\right)e_j-\Phi_T
	\right\|_{\mathcal E}^2
	\right].
\end{equation}

A minimizer \(\theta^*\) of \eqref{Wave10} simultaneously yields an approximate control \(\widetilde{\mathbf{F}}_{\theta^*}\) and an approximate solution \(\widetilde{\Phi}_{\theta^*}\).

\subsection{Approximation results}
\label{sub:approximation_results}

Having established the Neural-Spectral formulation, we now turn to its theoretical guarantees. In this section, we prove that the proposed architecture possesses sufficient representational capacity to approximate exact control-state pairs. We begin by specifying the regularity requirements for the neural network activation function, which are essential for establishing the density of the approximation space.

\begin{assumption}(Activation function)\label{ass:sobolev-UAP}
We assume that the scalar activation function \(\sigma:\mathbb{R}\to\mathbb{R}\) belongs to \(C^2(\mathbb{R})\) and is not a polynomial. For vector inputs, \(\sigma\) acts component-wise. This property is satisfied by standard smooth non-polynomial activations, such as \(\tanh\), sigmoid, softplus, Swish, and GELU.
\end{assumption}
\begin{lemma}\label{lem:sobolev-UAP}
Let Assumption~\ref{ass:sobolev-UAP} hold. Then the class
\[
\mathcal N_\sigma
:=
\left\{
\sum_{\ell=1}^m A_\ell \sigma(W_\ell t+B_\ell)
:
m\in\mathbb N^*,\;
A_\ell,W_\ell,B_\ell\in\mathbb R
\right\}
\]
is dense in \(W^{2,2}(0,T)\). Equivalently, for every
\(a\in W^{2,2}(0,T)\) and every \(\varepsilon>0\), there exists
\(a_\theta\in\mathcal N_\sigma\) such that
\[
\|a-a_\theta\|_{W^{2,2}(0,T)}<\varepsilon .
\]
Consequently, \(\mathcal N_\sigma\) is also dense in \(L^2(0,T)\).
\end{lemma}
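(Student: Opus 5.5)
The plan is to reduce density in \(W^{2,2}(0,T)\) to a simultaneous uniform approximation statement for a function and its first two derivatives on the compact interval \([0,T]\), and then obtain that statement from the classical universal approximation theorem for non-polynomial activations applied to the \emph{second derivative} of the activation.

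\textbf{Step 1: reduction to smooth targets.} Since \(C^\infty([0,T])\) (indeed polynomials) is dense in \(W^{2,2}(0,T)\), and \(\|v\|_{W^{2,2}(0,T)}\le C_T\|v\|_{C^2([0,T])}\), it suffices to show that for every \(a\in C^2([0,T])\) and \(\varepsilon>0\) there is \(a_\theta\in\mathcal N_\sigma\) with \(\|a-a_\theta\|_{C^2([0,T])}<\varepsilon\).

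\textbf{Step 2: approximate the second derivative.} The key observation is that \(\sigma''\) is continuous, and it is not a polynomial. Indeed, if \(\sigma''\) were a polynomial, integrating twice would make \(\sigma\) a polynomial, contradicting Assumption~\ref{ass:sobolev-UAP}. Hence, by the Leshno--Lin--Pinkus--Schocken theorem, the span of \(\{\sigma''(Wt+B)\}\) is dense in \(C([0,T])\). I would then choose \(\sum_\ell \alpha_\ell\sigma''(W_\ell t+B_\ell)\) uniformly \(\delta\)-close to \(a''\). Terms with \(W_\ell=0\) are constants; these can be absorbed by perturbing \(W_\ell\) slightly, or handled separately by noting that constants and linear functions also need to be produced.

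Now set \(A_\ell=\alpha_\ell/W_\ell^2\), so that \(\frac{d^2}{dt^2}\sum A_\ell\sigma(W_\ell t+B_\ell)\) is \(\delta\)-close to \(a''\). The resulting function \(h\) satisfies \(\|h''-a''\|_\infty<\delta\). Two matching conditions remain: \(h(0)=a(0)\) and \(h'(0)=a'(0)\). For these we need to add an affine function \(p(t)=c_0+c_1t\).

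\textbf{Step 3 (main obstacle): realize affine functions within \(\mathcal N_\sigma\) in \(C^2\).} For this step I would use difference quotients. Pick \(B_0\) with \(\sigma'(B_0)\neq0\), which exists since \(\sigma\) is nonconstant; if \(\sigma'\equiv0\) then \(\sigma\) would be a polynomial. Then \(\frac{\sigma(ht+B_0)-\sigma(B_0)}{h\sigma'(B_0)}\to t\) in \(C^2([0,T])\) as \(h\to0\). The \(C^2\) convergence follows from Taylor's theorem with the uniform continuity of \(\sigma',\sigma''\) on compacts: the second derivative is \(h\sigma''(ht+B_0)/\sigma'(B_0)\to0\).

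Constants are obtained similarly: choose \(B_1\) with \(\sigma(B_1)\ne0\) and take \(\sigma(0\cdot t+B_1)/\sigma(B_1)\). This is allowed because \(W_\ell=0\) is admitted, and the same device also handles the \(W_\ell=0\) terms from Step 2.

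\textbf{Step 4: assemble the estimate.} Put \(a_\theta=h+\tilde p\), where \(\tilde p\in\mathcal N_\sigma\) approximates the affine correction \((a(0)-h(0))+(a'(0)-h'(0))t\) in \(C^2\). Then by the fundamental theorem of calculus, \(\|a'-a_\theta'\|_\infty\le|a'(0)-a_\theta'(0)|+T\|a''-a_\theta''\|_\infty\), and similarly for \(a-a_\theta\). Therefore \(\|a-a_\theta\|_{C^2}\le C(T)\delta\), and choosing \(\delta\) small gives the claim.

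Finally, density in \(L^2(0,T)\) follows because \(W^{2,2}(0,T)\) is dense in \(L^2(0,T)\) and \(\|\cdot\|_{L^2}\le\|\cdot\|_{W^{2,2}}\).
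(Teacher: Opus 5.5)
Your proof is correct. Its first step matches the paper: smooth functions are dense in \(W^{2,2}(0,T)\), and \(\|v\|_{W^{2,2}(0,T)}\le\sqrt{3T}\,\|v\|_{C^2[0,T]}\). The core step takes a different route. The paper gets simultaneous \(C^2\)-approximation in one line by citing Pinkus's Theorem~4.1: \(\mathcal N_\sigma\) is dense in \(C^m\) on compacta whenever \(\sigma\in C^m\) is not a polynomial. You instead reprove the one-dimensional case of that theorem from the plain \(C^0\) universal approximation theorem, in four moves:
\begin{enumerate}
\item apply Leshno--Lin--Pinkus--Schocken to \(\sigma''\), correctly noting that \(\sigma''\) is continuous and not a polynomial;
\item rescale by \(W_\ell^{-2}\) to obtain \(h\in\mathcal N_\sigma\) with \(\|h''-a''\|_\infty\) small;
\item recover the kernel of \(d^2/dt^2\), the affine functions, inside \(\mathcal N_\sigma\) via difference quotients in the weight;
\item close with the fundamental theorem of calculus.
\end{enumerate}
Your route is self-contained and shows exactly why \(C^2\) regularity plus non-polynomiality suffices. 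However, it relies on integrating in the single variable \(t\), so it does not carry over to multivariate inputs as directly. The paper's route is shorter and dimension-independent, at the cost of invoking a heavier result.

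One slip needs fixing. In Step 3 you say the constant device \(\sigma(0\cdot t+B_1)/\sigma(B_1)\) ``also handles the \(W_\ell=0\) terms from Step 2.'' It does not. A \(W_\ell=0\) term contributes a constant to the approximant of \(a''\), so at the level of \(h\) it needs a quadratic \(\tfrac{c}{2}t^2\), not a constant. The rescaling \(A_\ell=\alpha_\ell/W_\ell^2\) is also undefined there. Your other suggestion, perturbing \(W_\ell\) to a small \(\eta\neq0\), is the right fix. Since \(\sigma''(\eta t+B_\ell)\to\sigma''(B_\ell)\) uniformly on \([0,T]\) by uniform continuity of \(\sigma''\) on compacts, you may assume all \(W_\ell\neq0\) from the outset. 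Equivalently, the span of \(\sigma''(Wt+B)\) over \(W\neq0\) is already dense in \(C([0,T])\).
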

\begin{proof}
We employ a two-step density argument as formulated by \cite[p.~254]{Hornik1991} for Sobolev spaces. Since the interval \((0, T)\) is bounded and possesses the segment property, by \cite[Theorem 3.18]{Adams1975}, the space of infinitely continuously differentiable functions \(C^\infty[0, T]\) is dense in \(W^{2,2}(0, T)\). Thus, there exists an auxiliary function \(v \in C^\infty[0,T]\) such that
\begin{equation}\label{lem8-est4-a}
    \| v - a \|_{W^{2,2}(0,T)} < \dfrac{\varepsilon}{2},
\end{equation}
Then, since \(v \in C^\infty[0, T] \subset C^2[0, T]\), by invoking \cite[Theorem 4.1]{Pinkus1999}, there exists a neural network \(a_{\theta} \in \mathcal N_\sigma\) that approximates \(v\) uniformly in \(C^2[0,T]\), with a sufficiently large width \(m\) and parameters \(\theta \in \left(\mathbb{R}^{m}\right)^{3P}\). Utilizing the inequality \(\| u \|_{W^{2,2}(0,T)} \le \sqrt{3T} \| u \|_{C^2[0,T]}\), we have
\begin{equation}\label{lem8-est5-a}
    \left\| a_{\theta} - v \right\|_{W^{2,2}(0,T)} \le \sqrt{3T} \left\| a_{\theta} - v\right\|_{C^2[0,T]} < \dfrac{\varepsilon}{2}.
\end{equation}
From \eqref{lem8-est4-a} and \eqref{lem8-est5-a}, we obtain 
\begin{equation*}
    \left\| a - a_\theta\right\|_{W^{2,2}(0,T)} \le \| a_{\theta} - v \|_{W^{2,2}(0,T)} + \| v - a\|_{W^{2,2}(0,T)} < \varepsilon,
\end{equation*}
which demonstrates that \(\mathcal N_\sigma\) is dense in \(W^{2,2}(0,T)\). Consequently, since \(W^{2,2}(0,T) \hookrightarrow L^2(0,T)\) and \(W^{2,2}(0,T)\) is dense in \(L^2(0,T)\), \(\mathcal N_\sigma\) is also dense in \(L^2(0,T)\).
\end{proof}

\begin{lemma}[Projection of the control]\label{lem:proj_control}
		Let \(\Pi_P\) denote the orthogonal projection of
		\(\mathcal E:=H_0^1(\Omega)\times L^2(\Omega)\) onto \(\mathscr{V}_P\) (defined in~\eqref{Wave7}), and let
		\(\pi_P\) denote the orthogonal projection of \(L^2(\Omega)\) onto
        \[
		E_P:=\operatorname{span}\{e_j\}_{j=1}^{P} \subset L^2(\Omega).
		\]
		If \(\mathbf F=(0,f)\), with \(f\in L^2((0,T)\times\omega)\), then, for
		a.e. \(t\in(0,T)\),
		\[
		\Pi_P\left(\mathbf{F}(t,\cdot)\right)
		=
		\left(0,\pi_P(f(t,\cdot))\right).
		\]
\end{lemma}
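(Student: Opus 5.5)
The plan is to compute $\Pi_P(\mathbf F(t,\cdot))$ directly from the orthonormal basis of Lemma~\ref{lem:spectrum_A}. Fix $t$ outside a null set, so that $f(t,\cdot)\in L^2(\Omega)$ (extended by zero outside $\omega$). Then $\mathbf F(t,\cdot)=(0,f(t,\cdot))\in\mathcal E$, and
\[
\Pi_P\mathbf F(t,\cdot)=\sum_{i=1}^{2P}\langle \mathbf F(t,\cdot),\phi_i\rangle_{\mathcal E}\,\phi_i .
\]
For $j\le P$, the $H_0^1$ component of $\mathbf F$ vanishes. As in the computation preceding \eqref{Wave8}, this gives
\[
\langle \mathbf F,\phi_{2j-1}\rangle_{\mathcal E}=\langle \mathbf F,\phi_{2j}\rangle_{\mathcal E}=-\tfrac{1}{\sqrt2}\langle f(t,\cdot),e_j\rangle_{L^2(\Omega)}=:-\tfrac{1}{\sqrt2}f_j(t).
\]
Here we use that $e_j$ is real and that the second component of each $\phi_i$ is $-e_j/\sqrt2$.

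Next we sum the pair of basis vectors for each $j$:
\[
-\tfrac{1}{\sqrt2}f_j\,(\phi_{2j-1}+\phi_{2j})=-\tfrac12 f_j\left(\Big(\tfrac{1}{\lambda_{2j-1}}+\tfrac{1}{\lambda_{2j}}\Big)e_j,\,-2e_j\right).
\]
Since $\lambda_{2j}=-\lambda_{2j-1}$, the first component cancels, and the pair contributes exactly $(0,f_j e_j)$. Summing over $j=1,\dots,P$ yields
\[
\Pi_P\mathbf F(t,\cdot)=\Big(0,\sum_{j=1}^P f_j(t)e_j\Big)=\big(0,\pi_P f(t,\cdot)\big),
\]
because $\{e_j\}$ is $L^2$-orthonormal and so $\pi_P f=\sum_{j\le P}\langle f,e_j\rangle e_j$.

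The only genuinely delicate point is the cancellation of the first component, which relies on $\lambda_{2j}=-\lambda_{2j-1}$ (equivalently, $\lambda_{2j}=\overline{\lambda_{2j-1}}$ with both purely imaginary). I will also check that the sesquilinear convention is consistent: the coefficients are $\langle\mathbf F,\phi_i\rangle$ taken conjugate-linear in $\phi_i$, and the second entry of $\phi_i$ is real, so no conjugation issue arises.

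Measurability in $t$ is not needed for the pointwise a.e. statement. The only requirement is $f(t,\cdot)\in L^2(\Omega)$ for a.e. $t$, which follows from Fubini since $f\in L^2((0,T)\times\omega)$.
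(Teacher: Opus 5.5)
Your proposal is correct and follows essentially the same route as the paper: you expand $\Pi_P\mathbf F(t,\cdot)$ in the orthonormal family $\{\phi_i\}_{i=1}^{2P}$, note that $\langle \mathbf F,\phi_{2j-1}\rangle_{\mathcal E}=\langle \mathbf F,\phi_{2j}\rangle_{\mathcal E}=-\tfrac{1}{\sqrt2}\langle f(t,\cdot),e_j\rangle_{L^2(\Omega)}$, and use $\lambda_{2j}=-\lambda_{2j-1}$ to get $\phi_{2j-1}+\phi_{2j}=(0,-\sqrt2\,e_j)$. Your remarks on choosing $t$ a.e.\ via Fubini, extending $f$ by zero outside $\omega$, and the conjugation convention are small clarifications that the paper leaves implicit.
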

\begin{proof}
    Since \(\{\phi_i\}_{i=1}^{2P}\) forms an orthonormal family in \(\mathcal{E}\) (see Lemma~\ref{lem:spectrum_A}),
    \begin{align}\label{eq:proj_expand}
    \Pi_P\left(\mathbf{F}(t,\cdot))\right) &=
    \sum_{i=1}^{2P}{\left\langle \mathbf{F}(t,\cdot), \phi_i\right\rangle_{\mathcal{E}}\phi_i} \notag\\
    &=\sum_{j=1}^{P}
    \left[
     \left\langle \mathbf{F}(t,\cdot), \phi_{2j-1}\right\rangle_{\mathcal{E}}\phi_{2j-1}
      +
      \left\langle \mathbf{F}(t,\cdot), \phi_{2j}\right\rangle_{\mathcal{E}}\phi_{2j}
    \right].
    \end{align}
    As \(\mathbf{F}(t,\cdot)\, = (0, f(t,\cdot))\), the inner product on \(\mathcal E\) gives
    \begin{align*}
    \left\langle \mathbf{F}(t,\cdot), \phi_{2j-1}\right\rangle_{\mathcal{E}}
    =
     \left\langle \mathbf{F}(t,\cdot), \phi_{2j}\right\rangle_{\mathcal{E}}
    = -\frac{1}{\sqrt{2}}\,
     \left\langle f(t,\cdot), e_{j}\right\rangle_{L^2(\Omega)}.
     \end{align*}
     Consequently,
     \begin{equation}\label{eq:pair_sum}
     \left\langle \mathbf{F}(t,\cdot), \phi_{2j-1}\right\rangle_{\mathcal{E}}\phi_{2j-1}
     + \left\langle \mathbf{F}(t,\cdot), \phi_{2j}\right\rangle_{\mathcal{E}}\phi_{2j}
     = -\dfrac{\left\langle f(t,\cdot), e_{j}\right\rangle_{L^2(\Omega)}}{\sqrt{2}}\left(\phi_{2j-1} + \phi_{2j}\right).
     \end{equation}
     Moreover, since \(\lambda_{2j} = -\lambda_{2j-1}\), we obtain
     \begin{equation*}
    \phi_{2j-1} + \phi_{2j} = \begin{pmatrix}
      \dfrac{1}{\lambda_{2j-1}} + \dfrac{1}{\lambda_{2j}} \medskip\\
      -2
    \end{pmatrix}^T\dfrac{e_{j}}{\sqrt{2}}=\begin{pmatrix}
      0\\
      -2
    \end{pmatrix}^T\dfrac{e_{j}}{\sqrt{2}}=\left(0,-\sqrt{2}e_{j}\right).
    \end{equation*}
    Substituting this back into \eqref{eq:pair_sum} yields
    \begin{equation*}
    \left\langle \mathbf{F}(t,\cdot), \phi_{2j-1}\right\rangle_{\mathcal{E}}\phi_{2j-1}
     + \left\langle \mathbf{F}(t,\cdot), \phi_{2j}\right\rangle_{\mathcal{E}}\phi_{2j}= \left(0,\left\langle f(t,\cdot),e_{j} \right\rangle_{L^2(\Omega)}e_{j}\right).
    \end{equation*}
    Summing over all \(j=1,\ldots,P\) and using \eqref{eq:proj_expand}, we conclude
    \begin{align*}
    \Pi_P\left(\mathbf{F}(t,\cdot))\right) = \left(0,
      \displaystyle\sum_{j=1}^{P}
        \left\langle f(t,\cdot),e_{j}\right\rangle_{L^2(\Omega)} e_{j}\right)= \left(0,\pi_P(f(t,\cdot))\right).
    \end{align*}
\end{proof}

This structural identity, together with the strong convergence of spectral Galerkin projections~\cite[Theorem 5.9]{brezis2011functional}, allow us to split the total approximation error into a spectral truncation contribution and a neural-network approximation contribution.

	\begin{theorem}[Control approximation error]\label{thm:approx_con}
		Assume that problem \eqref{Wave4} possesses a control of the form
		\[
		\mathbf F\mathbf 1_\omega=(0,f\mathbf 1_\omega),
		\qquad
		f\in L^2((0,T)\times\omega).
		\]
		Let \(\epsilon>0\), and let \(\sigma\) satisfy Assumption~\ref{ass:sobolev-UAP}. Then, there exist a spectral truncation order
		\(P_c\in\mathbb N^*\), a neural-network width \(m_c\in\mathbb N^*\), and
		parameters
		\[
		\theta_c=(A_j,W_j,B_j)_{j=1}^{P_c},
		\qquad
		A_j,W_j,B_j\in\mathbb R^{m_c},
		\]
		such that the Neural-Spectral approximation
		\[
		\widetilde{\mathbf F}_{\theta_c}\mathbf{1}_\omega
		=
		\left(0,\widetilde f_{\theta_c}\,\mathbf{1}_\omega\right),
		\qquad
		\widetilde f_{\theta_c}(t,x)
		=
		\sum_{j=1}^{P_c}
		A_j\cdot\sigma(W_jt+B_j)e_j(x),
		\]
		satisfies
		\[
		\left\|
		\widetilde{\mathbf F}_{\theta_c}\mathbf{1}_\omega
		-
		\mathbf F\mathbf 1_\omega
		\right\|_{L^2((0,T);\mathcal E)}
		<
		\epsilon.
		\]
	\end{theorem}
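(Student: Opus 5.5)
Since the first component of both objects vanishes, the $\mathcal E$-norm reduces to the $L^2(\Omega)$-norm of the second component. Thus
\[
\bigl\|\widetilde{\mathbf F}_{\theta_c}\mathbf 1_\omega-\mathbf F\mathbf 1_\omega\bigr\|_{L^2((0,T);\mathcal E)}
=\bigl\|(\widetilde f_{\theta_c}-f)\mathbf 1_\omega\bigr\|_{L^2((0,T)\times\Omega)}
\le \bigl\|\widetilde f_{\theta_c}-\tilde f\bigr\|_{L^2((0,T)\times\Omega)},
\]
where $\tilde f$ denotes the extension of $f$ by zero to $\Omega$. The last step holds because multiplication by $\mathbf 1_\omega$ is a contraction on $L^2$ and $f\mathbf 1_\omega=\tilde f$. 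So it suffices to approximate $\tilde f\in L^2((0,T)\times\Omega)$ by the separated sums $\sum_{j\le P}c_j(t)e_j(x)$. I will split the error into a spectral truncation part and a neural-network part, each bounded by $\epsilon/2$.

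\textbf{Step 1 (spectral truncation).} Write $\hat f_j(t):=\langle \tilde f(t,\cdot),e_j\rangle_{L^2(\Omega)}$. By Fubini, $\hat f_j\in L^2(0,T)$ and $\pi_P\tilde f(t)=\sum_{j\le P}\hat f_j(t)e_j$. Completeness of $\{e_j\}$ in $L^2(\Omega)$ (equivalently, Lemma~\ref{lem:proj_control} combined with the completeness in Lemma~\ref{lem:spectrum_A}) gives $\|\tilde f(t)-\pi_P\tilde f(t)\|_{L^2(\Omega)}\to0$ for a.e.\ $t$. This quantity is also dominated by $\|\tilde f(t)\|_{L^2(\Omega)}$, which lies in $L^2(0,T)$. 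Dominated convergence therefore yields $P_c$ with $\|\tilde f-\pi_{P_c}\tilde f\|_{L^2((0,T)\times\Omega)}<\epsilon/2$. Applying the paper's $\Pi_P$, this is exactly $\|\mathbf F-\Pi_{P_c}\mathbf F\|_{L^2((0,T);\mathcal E)}<\epsilon/2$.

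\textbf{Step 2 (neural approximation of coefficients).} For each $j\le P_c$, Lemma~\ref{lem:sobolev-UAP} (its $L^2(0,T)$ density statement) gives a network $c_j\in\mathcal N_\sigma$ with $\|c_j-\hat f_j\|_{L^2(0,T)}<\epsilon/(2\sqrt{P_c})$. By $L^2$-orthonormality of the $e_j$ and Parseval,
\[
\Bigl\|\sum_{j\le P_c}(c_j-\hat f_j)e_j\Bigr\|_{L^2((0,T)\times\Omega)}^2=\sum_{j\le P_c}\|c_j-\hat f_j\|_{L^2(0,T)}^2<\epsilon^2/4 .
\]
The triangle inequality combined with Step~1 then finishes the proof.

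\textbf{Main obstacle (a bookkeeping point).} The networks $c_j$ have possibly different widths $m_j$, whereas the statement requires a common width $m_c$. I will take $m_c:=\max_j m_j$ and pad each network with neurons having $A_\ell=0$ (arbitrary $W_\ell,B_\ell$). This does not change the functions. It also uses only that $\mathcal N_\sigma$ with width $m$ is contained in width $m'\ge m$, so no extra regularity is needed.
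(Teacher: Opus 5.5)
Your proof is correct and follows essentially the same route as the paper. You drop $\mathbf 1_\omega$ via the contraction bound, split into a spectral-truncation part and a neural-coefficient part at $\epsilon/2$ each ($\epsilon/(2\sqrt{P_c})$ per mode, combined by Parseval), and pad the networks to a common width with zero output weights. The only difference is that you justify $\pi_{P_c}\tilde f\to\tilde f$ in $L^2((0,T)\times\Omega)$ by pointwise-in-time convergence plus dominated convergence, which is slightly more careful than the paper's direct appeal to strong convergence of orthogonal projections.
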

	\begin{proof}
		First, observing that
        \begin{align*}
            \left\| \widetilde{\mathbf F}_{\theta_c}\mathbf{1}_\omega - \mathbf F\mathbf 1_\omega \right\|_{L^2((0,T);\mathcal E)} 
            &= \left(\int_0^T{\left\| \widetilde{\mathbf F}_{\theta_c}(t,\cdot)\mathbf{1}_\omega - \mathbf F(t,\cdot)\mathbf{1}_\omega \right\|^2_{\mathcal E}\,dt}\right)^{1/2} \\
            &=\left(\int_0^T{\left\| \widetilde{f}_{\theta_c}(t,\cdot)\mathbf{1}_\omega - f(t,\cdot)\mathbf{1}_\omega \right\|^2_{L^2(\Omega)}\,dt}\right)^{1/2} \\
            &\le \left(\int_0^T{\left\| \widetilde{f}_{\theta_c}(t,\cdot) - f(t,\cdot) \right\|^2_{L^2(\Omega)}\,\left\|\mathbf{1}_\omega\right\|^2_{L^{\infty}(\Omega)}\,dt}\right)^{1/2} \\
            &=\left(\int_0^T{\left\| \widetilde{f}_{\theta_c}(t,\cdot) - f(t,\cdot)\right\|^2_{L^2(\Omega)}\,dt}\right)^{1/2} = \left\| \widetilde{\mathbf F}_{\theta_c} - \mathbf F \right\|_{L^2((0,T);\mathcal E)},
        \end{align*}
        it suffices to bound \(\left\| \widetilde{\mathbf F}_{\theta_c} - \mathbf F \right\|_{L^2((0,T);\mathcal E)}\).
        By the triangle inequality, we have
		\[
		\left\|
		\widetilde{\mathbf F}_{\theta_c}
		-
		\mathbf F
		\right\|_{L^2((0,T);\mathcal E)}
		\le
		\left\|
		\widetilde{\mathbf F}_{\theta_c}
		-
		\mathbf F_{P_c}
		\right\|_{L^2((0,T);\mathcal E)}
		+
		\left\|
		\mathbf F_{P_c}
		-
		\mathbf F
		\right\|_{L^2((0,T);\mathcal E)},
		\]
		where \(P_c \in \mathbb{N}^*\) is the spectral truncation order to be chosen below and \(\mathbf{F}_{P_c}(t,\cdot)
		:=
		\Pi_{P_c}\left(\mathbf{F}(t,\cdot)\right)\).
		
        We first estimate the spectral truncation error. By Lemma~\ref{lem:proj_control}, we can write
		\[
		\mathbf F_{P_c}(t,\cdot)
		=
		\left(0,f_{P_c}(t,\cdot)\right),
		\qquad
		f_{P_c}(t,\cdot)
		:=
		\pi_{P_c}\left(f(t,\cdot)\right).
		\]
		Therefore, we find
		\[
		\begin{aligned}
			\left\|
			\mathbf F_{P_c}
			-
			\mathbf F
			\right\|_{L^2((0,T);\mathcal E)}
			&=
			\left(
			\int_0^T
			\left\|
			f_{P_c}(t,\cdot)-f(t,\cdot)
			\right\|_{L^2(\Omega)}^2
			\,dt
			\right)^{1/2}  \\
			&=
			\left\|
			f_{P_c}-f
			\right\|_{L^2((0,T)\times\Omega)} .
		\end{aligned}
		\]
		Since \(\pi_{P_c}\) is the orthogonal projection onto the span of the first
		\(P_c\) Dirichlet modes and
		\(f\in L^2((0,T)\times\Omega)\), the strong convergence of orthogonal projections ~\cite[Theorem 5.9]{brezis2011functional} implies that
		\[
		f_{P_c}\to f
		\qquad
		\text{strongly in }L^2((0,T)\times\Omega).
		\]
		Thus, for \(P_c\) sufficiently large, we can bound
		\[
		\left\|
		\mathbf F_{P_c}
		-
		\mathbf F
		\right\|_{L^2((0,T);\mathcal E)}
		<
		\frac{\epsilon}{2}.
		\]
		
        We now estimate the neural-network approximation error. For each
		\(j=1,\ldots,P_c\), define
		\[
		c_j(t)
		:=
		\left\langle
		f(t,\cdot),e_j
		\right\rangle_{L^2(\Omega)} .
		\]
		Then, we obtain
		\[
		f_{P_c}(t,x)
		=
		\sum_{j=1}^{P_c}c_j(t)e_j(x).
		\]
		Moreover, by the Cauchy--Schwarz inequality, we have
		\[
		|c_j(t)|
		\le
		\|f(t,\cdot)\|_{L^2(\Omega)}
		\|e_j\|_{L^2(\Omega)}
		\le
		\|f(t,\cdot)\|_{L^2(\Omega)}.
		\]
		Hence \(c_j\in L^2(0,T)\) for every \(j=1,\ldots,P_c\).
		By Lemma~\ref{lem:sobolev-UAP}, the neural network class \(\mathcal N_\sigma\) is dense in \(L^2(0,T)\). Therefore, for each
		\(j=1,\ldots,P_c\), there exist a width \(m_c^j\in\mathbb N^*\) and parameters
		\(\left\{A_j^\ell,w_j^\ell,b_j^\ell\right\}_{\ell=1}^{m_c^j}\) such that
		\[
		\left\|
		c_j
		-
		c_j(\cdot;\theta_c)
		\right\|_{L^2(0,T)}
		<
		\frac{\epsilon}{2\sqrt{P_c}},
		\]
		where
		\[
		c_j(t;\theta_c)
		:=
		\sum_{\ell=1}^{m_c^j}
		A_j^\ell\sigma\left(w_j^\ell t+b_j^\ell\right) \in \mathcal{N}_\sigma.
		\]
		Taking
		\[
		m_c:=\max_{1\le j\le P_c}m_c^j
		\]
		and padding the shorter networks with zero output weights, we may assume that
		all coefficient networks have the same width \(m_c\).
		We define
		\[
		\widetilde f_{\theta_c}(t,x)
		:=
		\sum_{j=1}^{P_c}c_j(t;\theta_c)e_j(x),
		\qquad
		\widetilde{\mathbf F}_{\theta_c}
		:=
		\left(0,\widetilde f_{\theta_c}\right).
		\]
		By Parseval's identity, we can estimate
		\[
		\begin{aligned}
			\left\|
			\mathbf F_{P_c}
			-
			\widetilde{\mathbf F}_{\theta_c}
			\right\|_{L^2((0,T);\mathcal E)}^2
			&=
			\int_0^T
			\left\|
			f_{P_c}(t,\cdot)
			-
			\widetilde f_{\theta_c}(t,\cdot)
			\right\|_{L^2(\Omega)}^2
			\,dt \\
			&=
			\sum_{j=1}^{P_c}
			\left\|c_j-c_j(\cdot;\theta_c)\right\|_{L^2(0,T)}^2  \\
			&<
			\sum_{j=1}^{P_c}
			\frac{\epsilon^2}{4P_c}
			=
			\frac{\epsilon^2}{4}.
		\end{aligned}
		\]
		Thus, we have
		\[
		\left\|
		\mathbf F_{P_c}
		-
		\widetilde{\mathbf F}_{\theta_c}
		\right\|_{L^2((0,T);\mathcal E)}
		<
		\frac{\epsilon}{2}.
		\]
        
		Combining the two estimates gives
		\[
		\left\|
		\widetilde{\mathbf F}_{\theta_c}
		-
		\mathbf F
		\right\|_{L^2((0,T);\mathcal E)}
		<
		\epsilon,
		\]
		which implies
        \[
        \left\| \widetilde{\mathbf F}_{\theta_c}\mathbf{1}_\omega - \mathbf F\mathbf 1_\omega \right\|_{L^2((0,T);\mathcal E)} < \epsilon.
        \]
	\end{proof}
    
	\begin{remark}
		The assumption \(f\in L^2((0,T)\times\omega)\) in Theorem~\ref{thm:approx_con} is consistent with the standing control framework adopted in this paper. As discussed in Remark~\ref{rem:regularity}, stronger regularity such as \[ f\in W^{1,\infty}\left((0,T);L^2(\omega)\right) \] is not generally available from the existing controllability theory. Under the minimal \(L^2\)-assumption, Theorem~\ref{thm:approx_con} gives strong approximation in \(L^2((0,T);\mathcal E)\), but does not provide an explicit approximation rate.
		
		If additional regularity is available, then a quantitative rate can be obtained. For instance, if
		\[
		f\in W^{1,\infty}\left((0,T);L^2(\omega)\right),
		\]
		then each coefficient
		\[
		c_j(t)
		=
		\left\langle f(t,\cdot),e_j
		\right\rangle_{L^2(\Omega)}
		\]
		belongs to \(W^{1,\infty}(0,T)\). One may then apply \cite[Theorem~1]{mao2023rates}, with input dimension \(d=1\) and regularity index \(r=1\), to obtain, for each \(j=1,\ldots,P_c\),
		\[
		\inf_\theta
		\|c_j-c_j(\cdot;\theta)\|_{L^\infty(0,T)}
		\le
		C
		\|c_j\|_{W^{1,\infty}(0,T)}
		\log(m)^{3/2}m^{-3/5}.
		\]
		The present work uses only the weaker \(L^2\)-assumption to remain consistent with the available regularity of the exact control. Quantitative rates under stronger control regularity are left for future investigation.
	\end{remark}
    
Theorem~\ref{thm:approx_con} guarantees that \(\widetilde{\mathbf{F}}_{\theta_c}\mathbf{1}_\omega\) approximates the exact control \(\mathbf{F}\,\mathbf{1}_\omega\) in \(L^2((0,T);\mathcal E)\). A natural next question is whether replacing \(\mathbf{F}\,\mathbf{1}_\omega\) by \(\widetilde{\mathbf{F}}_{\theta_c}\mathbf{1}_\omega\) in system~\eqref{Wave4} produces a solution that remains close to the exact solution \(\Phi\). Thus, we aim to estimate the difference between the exact solution \(\Phi\) and the approximate solution \(\widetilde{\Phi}_{\theta}\) generated by the neural operator. Let the initial data \(\Phi_0 = (u_0, u_1) \in \mathcal E = H^1_0(\Omega) \times L^2(\Omega)\) and the internal control \(\mathbf{F}\,\mathbf{1}_\omega = (0, f\,\mathbf{1}_\omega)\) with \(f\in L^2((0,T) \times \omega)\) be given. Under the appropriate asymptotic growth conditions of the nonlinearity \(g\), it is a well-known result from the well-posedness theory of semi-linear wave equations (see, e.g., \cite{Li2000, Zuazua1993}) that the system \eqref{Wave1} admits a unique solution 
\[
u \in C\!\left([0,T]; H^1_0(\Omega)\right) \cap C^1\!\left([0,T]; L^2(\Omega)\right).
\]
Consequently, the system \eqref{Wave4} has a unique solution 
\[
\Phi = \left(u, \partial_t u\right) \in C([0,T]; \mathcal E).
\]

Let \(\widetilde{\Psi}_{\theta_c}=\left(\widetilde{\psi}_{\theta_c},\partial_t\widetilde{\psi}_{\theta_c}\right) \in C([0,T]; \mathcal E)\) be the solution of the approximate problem obtained by replacing the control \(\mathbf{F}\,\mathbf{1}_\omega\) of \eqref{Wave4} with the neural operator \(\widetilde{\mathbf{F}}_{\theta_c}\mathbf{1}_\omega\) defined as in Theorem \ref{thm:approx_con}, i.e.,
    \begin{equation}\label{Wave4_approx}
        \partial_t{\widetilde{\Psi}_{\theta_c}}+A\widetilde{\Psi}_{\theta_c}+\widetilde{\mathbf{F}}_{\theta_c}\mathbf{1}_\omega+G\left(\widetilde{\Psi}_{\theta_c}\right)=0, \quad \text{with } \widetilde{\Psi}_{\theta_c}(0,\cdot)=\Phi(0,\cdot).
    \end{equation}
For each \(j=1,\dots,P\), denoting \(a_j^*(t;\theta_c)=\left\langle \widetilde{\psi}_{\theta_c}(t,\cdot),e_j \right\rangle_{L^2(\Omega)}\), we prove the regularity of \(a_j^*\) in the following lemma. 

    \begin{lemma}[Regularity of projected coefficients]\label{lem:reg_ai*}
		Under the assumptions of Theorem~\ref{thm:approx_con}, assume that
		\(g:H_0^1(\Omega)\to L^2(\Omega)\) is Lipschitz continuous with constant
		\(L_g\). Let
		\[
		\widetilde\Psi_{\theta_c}
		=\left(\widetilde\psi_{\theta_c},\partial_t\widetilde\psi_{\theta_c}\right)
		\in C([0,T]; \mathcal E)
		\]
		be the solution of \eqref{Wave4_approx}. For each \(j=1,\ldots,P\), define
		\[
		a_j^*(t;\theta_c)
		:=
		\left\langle
		\widetilde\psi_{\theta_c}(t, \cdot),e_j
		\right\rangle_{L^2(\Omega)}, \qquad t \in [0,T].
		\]
		Then
		\[
		a_j^*(\cdot;\theta_c)\in W^{2,2}(0,T).
		\]
	\end{lemma}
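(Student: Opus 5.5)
The plan is to test the weak formulation of \eqref{Wave4_approx} against the fixed eigenfunction $e_j$ and read off a scalar forced oscillator for $a_j^*$ whose right-hand side lies in $L^2(0,T)$. Then $a_j^{*\prime\prime}\in L^2(0,T)$ in the distributional sense, and together with $a_j^*,a_j^{*\prime}\in C[0,T]$ this gives $a_j^*\in W^{2,2}(0,T)$.

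\textbf{Low-order regularity.} Since $\widetilde\Psi_{\theta_c}=(\widetilde\psi_{\theta_c},\partial_t\widetilde\psi_{\theta_c})\in C([0,T];\mathcal E)$, we have $\widetilde\psi_{\theta_c}\in C([0,T];H_0^1(\Omega))\cap C^1([0,T];L^2(\Omega))$. Hence
\[
a_j^*(t)=\langle\widetilde\psi_{\theta_c}(t),e_j\rangle_{L^2(\Omega)}\in C^1[0,T],
\qquad
a_j^{*\prime}(t)=\langle\partial_t\widetilde\psi_{\theta_c}(t),e_j\rangle_{L^2(\Omega)} .
\]
The derivative formula holds because $e_j$ is fixed and the pairing is continuous on $L^2(\Omega)$.

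\textbf{Projected equation.} Pair the second component of \eqref{Wave4_approx} with $e_j$. Equivalently, take the $\mathcal E$-inner product with $\phi_{2j-1}$ and $\phi_{2j}$, exactly as in the derivation of \eqref{Wave8}, using skew-adjointness $A^*=-A$ and $A\phi_i=\lambda_i\phi_i$ so that the unbounded operator falls on the smooth test vector. The second component of $A\widetilde\Psi$ is $-\Delta\widetilde\psi+\alpha\widetilde\psi$, and $-\Delta e_j=\mu_je_j$ with $e_j\in H_0^1(\Omega)$; Green's formula then gives
\[
\langle-\Delta\widetilde\psi+\alpha\widetilde\psi,e_j\rangle=(\mu_j+\alpha)a_j^*
\]
for $\widetilde\psi(t)\in H_0^1(\Omega)$. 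One obtains, in $\mathcal D'(0,T)$,
\[
a_j^{*\prime\prime}(t)=-(\mu_j+\alpha)a_j^*(t)-\big\langle g(\widetilde\psi_{\theta_c}(t)),e_j\big\rangle_{L^2(\Omega)}-\big\langle \widetilde f_{\theta_c}(t)\mathbf 1_\omega,e_j\big\rangle_{L^2(\Omega)} .
\]
To justify this rigorously for a mild solution, I will use the Duhamel formula. With $S(t)=e^{-tA}$ unitary, $\langle S(t)\Phi_0,\phi_i\rangle=e^{\overline{-\lambda_i}t}\cdots$, so each modal coordinate $\langle\widetilde\Psi(t),\phi_i\rangle_{\mathcal E}$ satisfies the scalar variation-of-constants identity
\[
y_i(t)=e^{-\lambda_it}y_i(0)-\int_0^te^{-\lambda_i(t-s)}h_i(s)\,ds,
\qquad
h_i=\langle\widetilde{\mathbf F}_{\theta_c}\mathbf 1_\omega+G(\widetilde\Psi_{\theta_c}),\phi_i\rangle_{\mathcal E}.
\]
This makes $y_i$ absolutely continuous with $y_i'=-\lambda_iy_i-h_i$ whenever $h_i\in L^1$. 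By \eqref{phi-inner-odd}--\eqref{phi-inner-even}, $a_j^*$ and $a_j^{*\prime}$ are linear combinations of $y_{2j-1}$ and $y_{2j}$, so the ODE follows.

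\textbf{Right-hand side in $L^2(0,T)$.} The first term is continuous. For the nonlinear term, the Lipschitz hypothesis gives
\[
\|g(\widetilde\psi(t))\|_{L^2}\le\|g(0)\|_{L^2}+L_g\|\widetilde\psi(t)\|_{H_0^1}.
\]
This is bounded on $[0,T]$, and $g(\widetilde\psi(\cdot))$ is continuous into $L^2(\Omega)$, so the coefficient is continuous and hence in $L^2(0,T)$. For the control term, $\widetilde f_{\theta_c}=\sum_\ell c_\ell(t;\theta_c)e_\ell$ with each $c_\ell$ smooth, so the term equals $\sum_\ell M_{j\ell}c_\ell(t;\theta_c)\in C[0,T]$, where $|M_{j\ell}|\le1$. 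Hence $a_j^{*\prime\prime}\in L^2(0,T)$; in fact it is even continuous, so $a_j^*\in C^2[0,T]\subset W^{2,2}(0,T)$.

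\textbf{Main obstacle.} The delicate step is the rigorous passage from the abstract mild solution in $C([0,T];\mathcal E)$ to the pointwise modal ODE. Since $\widetilde\psi$ is not assumed to lie in $H^2(\Omega)$, $\Delta\widetilde\psi$ is not defined classically. I will handle this through the modal Duhamel formula above, which only uses that $\phi_i\in D(A)$ is an eigenvector of the generator and that the forcing $\widetilde{\mathbf F}_{\theta_c}\mathbf 1_\omega+G(\widetilde\Psi_{\theta_c})$ lies in $C([0,T];\mathcal E)$.
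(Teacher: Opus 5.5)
Your proposal is correct and follows the paper's route. Like the paper, you project \eqref{Wave4_approx} onto $\phi_{2j-1},\phi_{2j}$ (equivalently, pair with $e_j$) to get the forced oscillator $a_j^{*\prime\prime}=-(\mu_j+\alpha)a_j^*-\langle g(\widetilde\psi_{\theta_c}),e_j\rangle-\langle\widetilde f_{\theta_c}\mathbf 1_\omega,e_j\rangle$, and then check that each term lies in $L^2(0,T)$. Your modal Duhamel argument usefully tightens the paper's bare ``in the sense of distributions'' step and even gives $a_j^*\in C^2[0,T]$; just fix the slip in $\langle S(t)\Phi_0,\phi_i\rangle_{\mathcal E}$, where the factor should be $e^{\overline{\lambda_i}t}=e^{-\lambda_i t}$ rather than $e^{\overline{-\lambda_i}t}$ (your displayed formula already uses the correct one).
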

	
	\begin{proof}
		As \(\widetilde\Psi_{\theta_c}
		=\left(\widetilde\psi_{\theta_c},\partial_t\widetilde\psi_{\theta_c}\right)
		\in C([0,T]; \mathcal E)\), we have \(\partial_t\widetilde\psi_{\theta_c} \in C([0,T]; L^2(\Omega))\). Moreover, since
		\[
		\left(a_j^*\right)'(t;\theta_c)
		=
		\left\langle
		\partial_t\widetilde\psi_{\theta_c}(t,\cdot),e_j
		\right\rangle_{L^2(\Omega)}
		\]
		and \(\|e_j\|_{L^2(\Omega)}=1\), the Cauchy--Schwarz inequality gives
		\[
		\left|\left(a_j^*\right)'(t;\theta_c)-\left(a_j^*\right)'(s;\theta_c)\right|
		\le
		\left\|\partial_t\widetilde\psi_{\theta_c}(t,\cdot)-\partial_t\widetilde\psi_{\theta_c}(s,\cdot)\right\|_{L^2(\Omega)},
		\qquad t,s\in[0,T].
		\]
		Thus \(\left(a_j^*\right)'(\cdot;\theta_c)\in C([0,T])\subset L^2(0,T)\).
		
		We now prove \(\left(a_j^*\right)''(\cdot;\theta_c)\in L^2(0,T)\). For each \(i=1,\ldots, 2P\), testing \eqref{Wave4_approx} against
		\(\phi_i\) and using \(\left\langle A\widetilde{\Psi}_{\theta_c}(t,\cdot), \phi_i \right\rangle = -\overline{\lambda_i}\left\langle \widetilde{\Psi}_{\theta_c}(t,\cdot), \phi_i \right\rangle\), we obtain, in the sense of
		distributions on \((0,T)\),
        \begin{align*}
        \left\langle \widetilde{\Psi}'_{\theta_c}(t,\cdot), \phi_i\right\rangle -\overline{\lambda_i}\left\langle \widetilde{\Psi}_{\theta_c}(t,\cdot), \phi_i \right\rangle + \left\langle G\left(\widetilde{\Psi}_{\theta_c}(t,\cdot)\right),\phi_i\right\rangle_{\mathcal E} + \left\langle\widetilde{\mathbf{F}}_{\theta_c}(t,\cdot)\,\mathbf{1}_\omega,\phi_i\right\rangle_{\mathcal E}=0,\\ \quad t \in (0,T),\quad i=1,\ldots, 2P.
        \end{align*} 
        Since \(G\left(\widetilde{\Psi}_{\theta_c}\right)=\left(0,g\left(\widetilde{\psi}_{\theta_c}\right)\right)\),
\(\widetilde{\mathbf{F}}_{\theta_c}\mathbf{1}_\omega=\left(0,\widetilde{f}_{\theta_c}\mathbf{1}_\omega\right)\), we have

\begin{align*}
    &\left\langle G\left(\widetilde{\Psi}_{\theta_c}(t,\cdot)\right),\phi_{2j-1}\right\rangle_{\mathcal E}  = \left\langle G\left(\widetilde{\Psi}_{\theta_c}(t,\cdot)\right),\phi_{2j}\right\rangle_{\mathcal E}=\dfrac{-1}{\sqrt{2}}\left\langle g\left(\widetilde{\psi}_{\theta_c}(t,\cdot)\right),e_j\right\rangle_{L^2(\Omega)} \\    &\left\langle\widetilde{\mathbf{F}}_{\theta_c}(t,\cdot)\,\mathbf{1}_\omega,\phi_{2j-1}\right\rangle_{\mathcal E}=\left\langle\widetilde{\mathbf{F}}_{\theta_c}(t,\cdot)\,\mathbf{1}_\omega,\phi_{2j}\right\rangle_{\mathcal E}=\dfrac{-1}{\sqrt{2}}\left\langle\widetilde{f}_{\theta_c}(t,\cdot)\,\mathbf{1}_\omega,e_j\right\rangle_{L^2(\Omega)} 
\end{align*}
Therefore, computing similar to \eqref{phi-inner-odd}, \eqref{phi-inner-even} and considering two cases \(i=2j-1\) and \(i=2j\), we obtain
        \[
        \left(a_j^*\right)''(t;\theta)=-(\mu_j+\alpha)a_j^*(t;\theta)
		-
		\left\langle g\left(\widetilde{\psi}_{\theta_c}(t,\cdot)\right),e_j\right\rangle_{L^2(\Omega)}
		-
		\left\langle\widetilde{f}_{\theta_c}(t,\cdot)\,\mathbf{1}_\omega,e_j\right\rangle_{L^2(\Omega)} .
		\]	
		We verify that each term on the right-hand side belongs to \(L^2(0,T)\).
		The linear term belongs to \(L^2(0,T)\) because
		\(a_j^*(\cdot;\theta_c)\in C^1[0,T] \subset L^2(0,T)\). For the nonlinear term, the Lipschitz
		continuity of \(g:H_0^1(\Omega)\to L^2(\Omega)\) gives
		\[
		\left\|g\left(\widetilde\psi_{\theta_c}(t,\cdot)\right)\right\|_{L^2(\Omega)}
		\le
		L_g\left\|\widetilde\psi_{\theta_c}(t,\cdot)\right\|_{H_0^1(\Omega)}
		+
		\|g(0)\|_{L^2(\Omega)} .
		\]
		Since
		\[
		\widetilde\Psi_{\theta_c}\in C([0,T]; \mathcal E),
		\]
		the map
		\[
		t\mapsto \left\|\widetilde\psi_{\theta_c}(t,\cdot)\right\|_{H_0^1(\Omega)}
		\]
		is bounded on \([0,T]\). Therefore
		\[
		t\mapsto
		\left\langle g\left(\widetilde{\psi}_{\theta_c}(t,\cdot)\right),e_j\right\rangle_{L^2(\Omega)}
		\]
		belongs to \(L^2(0,T)\).
		
		Finally, by construction of \(\widetilde f_{\theta_c}\), the control term
		\[
		t\mapsto
		\left\langle
		\widetilde f_{\theta_c}(t,\cdot)\,\mathbf{1}_\omega,e_j
		\right\rangle_{L^2(\Omega)}
		\]
		belongs to \(L^2(0,T)\). Hence
		\[
		(a_j^*)''(\cdot;\theta_c)\in L^2(0,T),
		\]
		and therefore
		\[
		a_j^*(\cdot;\theta_c)\in W^{2,2}(0,T).
		\]
	\end{proof}

Having established the \(W^{2,2}\)-regularity of the Galerkin coefficients \(a_j^*(t;\theta_c)\) under the neural control \(\widetilde{f}_{\theta_c}\,\mathbf{1}_\omega\) in Lemma~\ref{lem:reg_ai*}, we are now ready to bound the residual generated by the neural approximation.

	\begin{theorem}[Residual approximation bound]\label{thm:res-bound}
		Assume the hypotheses of Theorem~\ref{thm:approx_con}. Suppose moreover that
		\(G(\Phi)=(0,g(u))\), where
		\[
		g:H_0^1(\Omega)\to L^2(\Omega)
		\]
		is Lipschitz continuous with constant \(L_g\). Let \(P\in\mathbb N^*\) be fixed,
		and let \(\theta_c\) be chosen as in Theorem~\ref{thm:approx_con}. Let
		\[
		\left(a_1^*(\cdot;\theta_c),\ldots,a_P^*(\cdot;\theta_c)\right)
		\]
		be the Galerkin coefficient solution associated with the projected system \eqref{Wave8}
		driven by the neural control \(\widetilde{\mathbf F}_{\theta_c}\mathbf{1}_\omega=\left(0,\widetilde{f}_{\theta_c}\mathbf{1}_\omega\right)\), and assume
		that
		\[
		a_j^*(\cdot;\theta_c)\in W^{2,2}(0,T),
		\qquad j=1,\ldots,P.
		\]
		Then, for every \(\epsilon>0\), there exist a sufficiently large width
		\(m_a\in\mathbb N^*\) and parameters
		\[
		\theta_a\in(\mathbb R^{m_a})^{3P}
		\]
		such that, with \(\theta=(\theta_a,\theta_c)\),
		\[
		\sum_{j=1}^P
		\|R_j(\cdot;\theta)\|_{L^2(0,T)}^2
		<
		\epsilon,
		\]
        where \(R_j(\cdot;\theta)\) is defined by \eqref{Wave9} for each \(j=1,\ldots,P\).
	\end{theorem}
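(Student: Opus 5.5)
The argument is a perturbation one: the residual vanishes on the exact Galerkin solution, and we approximate that solution by networks in $W^{2,2}$. Since $(a_1^*,\ldots,a_P^*)$ solves the projected system \eqref{Wave8} driven by $\widetilde f_{\theta_c}\mathbf 1_\omega$, the residual \eqref{Wave9} evaluated at $a^*$ is identically zero, that is,
\[
0=(a_j^*)''+(\mu_j+\alpha)a_j^*+\Big\langle g\Big(\sum_{\ell=1}^P a_\ell^* e_\ell\Big),e_j\Big\rangle_{L^2(\Omega)}+\sum_{\ell=1}^P M_{j\ell}c_\ell(\cdot;\theta_c)
\]
a.e. on $(0,T)$. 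Subtracting this identity from \eqref{Wave9} eliminates the control term, because $\theta_c$ is held fixed. Writing $\delta_j:=a_j(\cdot;\theta_a)-a_j^*$, $\widetilde u:=\sum_\ell a_\ell(\cdot;\theta_a)e_\ell$ and $u^*_P:=\sum_\ell a_\ell^*e_\ell$, we get
\[
R_j(\cdot;\theta)=\delta_j''+(\mu_j+\alpha)\delta_j+\big\langle g(\widetilde u)-g(u^*_P),e_j\big\rangle_{L^2(\Omega)}.
\]

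Next we bound each term in $L^2(0,T)$. The first two give $\|\delta_j''\|_{L^2}+(\mu_j+\alpha)\|\delta_j\|_{L^2}\le(1+\mu_P+\alpha)\|\delta_j\|_{W^{2,2}(0,T)}$, using that $\mu_j\le\mu_P$ in the chosen ordering; in any case $P$ is fixed, so $\max_{j\le P}\mu_j$ is a finite constant. For the nonlinear term we use Cauchy--Schwarz, $\|e_j\|_{L^2}=1$, and the Lipschitz hypothesis on $g:H_0^1\to L^2$. The $H_0^1$-norm is the $\mathcal E$-type norm $\|\nabla\cdot\|^2+\alpha\|\cdot\|^2$, and by \eqref{H_0^1-norm-e_k} and orthogonality $\|\sum_\ell\delta_\ell e_\ell\|_{H_0^1}^2=\sum_\ell(\mu_\ell+\alpha)|\delta_\ell|^2$. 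This gives, pointwise in $t$,
\[
\big|\langle g(\widetilde u)-g(u^*_P),e_j\rangle\big|\le L_g\Big(\sum_{\ell=1}^P(\mu_\ell+\alpha)|\delta_\ell(t)|^2\Big)^{1/2}.
\]
Squaring and integrating over $(0,T)$ yields a bound $L_g^2(\mu_P+\alpha)\sum_\ell\|\delta_\ell\|_{L^2}^2$. Summing over $j$, we obtain
\[
\sum_{j=1}^P\|R_j\|_{L^2(0,T)}^2\le C_P\sum_{j=1}^P\|\delta_j\|_{W^{2,2}(0,T)}^2,\qquad C_P:=2(1+\mu_P+\alpha)^2+2PL_g^2(\mu_P+\alpha).
\]

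It then remains to choose $\theta_a$. By hypothesis $a_j^*\in W^{2,2}(0,T)$, and Lemma~\ref{lem:reg_ai*} supplies this. Lemma~\ref{lem:sobolev-UAP} therefore gives, for each $j$, a network in $\mathcal N_\sigma$ with $\|\delta_j\|_{W^{2,2}}^2<\epsilon/(PC_P)$. As in the proof of Theorem~\ref{thm:approx_con}, we take $m_a$ to be the maximal width and pad the narrower networks with zero output weights. This gives $\sum_j\|R_j\|^2<\epsilon$.

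The delicate point is the nonlinear term. We need the Nemytskii Lipschitz bound to apply to finite modal sums, and we need the $H_0^1$-norm of the coefficient error to be controlled by $L^2$-in-time norms. This works only because $P$ is fixed, so the factor $\mu_P+\alpha$ is a harmless constant, and because $W^{2,2}$-density controls $\delta_j''$ and $\delta_j$ together. Pointwise control of $a_j(0),a_j'(0)$ is not needed here, since the statement concerns only the residual and not the initial or terminal terms.
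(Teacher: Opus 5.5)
Your argument is correct and is essentially the paper's proof: subtracting the Galerkin identity for $a^*$ cancels the control term, the linear part is controlled by $\|\delta_j\|_{W^{2,2}(0,T)}$, the nonlinear part by the Lipschitz bound combined with $\|\sum_\ell \delta_\ell e_\ell\|_{H_0^1}^2=\sum_\ell(\mu_\ell+\alpha)|\delta_\ell|^2$, and the conclusion follows from the $W^{2,2}$-density of $\mathcal N_\sigma$ with zero-padding to a common width. The only difference is cosmetic: the paper sums the nonlinear contributions over $j$ with Bessel's inequality (together with $(x_1+x_2+x_3)^2\le 3(x_1^2+x_2^2+x_3^2)$), whereas your per-mode Cauchy--Schwarz estimate costs an extra factor $P$ in the constant, which is harmless because $P$ is fixed.
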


\begin{proof}
Consider \(\theta=(\theta_a,\theta_c)\) with \(\theta_c\) defined in Theorem~\ref{thm:approx_con} and \(\theta_a\) will be chosen later. By the definitions of the residual function \(R_j(\cdot,\theta)\) in \eqref{Wave9} and the system \eqref{Wave8}, for each \(j=1,\dots,P\) and \(t \in (0,T)\), we have
\begin{align*}
    R_j(t; \theta) &= \left[ a_j''(t; \theta) - \left(a_j^*\right)''(t;\theta_c) \right] + (\mu_j+\alpha) \left[ a_j(t; \theta) - a_j^*(t;\theta_c) \right] +\left\langle g\left(\widetilde{u}_{\theta}(t, \cdot)\right) - g\left(u_{\theta_c}^*(t, \cdot)\right), e_j \right\rangle_{L^2(\Omega)},
\end{align*}
where \(\widetilde{u}_{\theta}(t, \cdot) := \displaystyle\sum_{\ell=1}^P a_\ell(t;\theta)e_\ell\), \(u_{\theta_c}^*(t, \cdot)=\displaystyle\sum_{\ell=1}^P a_\ell^*(t;\theta_c)e_\ell\). Taking the absolute value and using the triangle inequality, we obtain
\begin{align*}
    |R_j(t; \theta)| &\leq \left|a_j''(t; \theta) - (a_j^*)''(t;\theta_c)\right| + (\mu_j+\alpha) \left|a_j(t; \theta) - a_j^*(t;\theta_c)\right| \\
    &\quad + \left|\left\langle g\left(\widetilde{u}_{\theta}(t, \cdot)\right) - g\left(u_{\theta_c}^*(t, \cdot)\right), e_j \right\rangle_{L^2(\Omega)}\right|, \quad t \in (0,T), \quad j=1,\dots,P.
\end{align*}
Squaring both sides, integrating over \((0,T)\), and summing over \(j=1,\ldots,P\) yields
\begin{align}\label{lem8-est1}
    \sum_{j=1}^P \| R_j(\cdot; \theta) \|_{L^2(0,T)}^2 &\le 3\sum_{j=1}^P\left[\left\|a_j''(\cdot;\theta)-\left(a_j^*\right)''(\cdot;\theta_c)\right\|^2_{L^2(0,T)} + (\mu_j+\alpha)^2\left\|a_j(\cdot;\theta)-a_j^*(\cdot;\theta_c)\right\|^2_{L^2(0,T)}\right] \nonumber\\ 
    &+ 3\int_0^T{\left\| g\left(\widetilde{u}_{\theta}(t, \cdot)\right) - g\left(u_{\theta_c}^*(t, \cdot)\right)\right\|^2_{L^2(\Omega)}\,dt}.
\end{align}
Here, we utilized the basic inequality \(\left(x_1 + x_2 + x_3\right)^2 \leq 3\left(x_1^2 + x_2^2 + x_3^2 \right)\) and Bessel's inequality (applied to the first \(P\) Dirichlet modes). In light of the Lipschitz continuity of \(g\) with constant \(L_g\) and Parseval's identity, we obtain the following estimate
\begin{align*}
    \| g\left(\widetilde{u}_{\theta}(t, \cdot)\right) - g\left(u_{\theta_c}^*(t, \cdot)\right)\!\|_{L^2(\Omega)} \leq L_g \left\| \widetilde{u}_{\theta}(t, \cdot) - u_{\theta_c}^*(t, \cdot) \right\|_{H_0^1(\Omega)}
    &= L_g \left( \sum_{j=1}^P |a_j(t; \theta) - a_j^*(t;\theta_c)|^2\|e_j\|^2_{H_0^1(\Omega)} \right)^{\frac{1}{2}} \nonumber \\
    &= L_g \left( \sum_{j=1}^P{(\mu_j+\alpha)\left|a_j(t; \theta) - a_j^*(t;\theta_c)\right|^2}\right)^{\frac{1}{2}}
\end{align*}
Note that we have used the fact that \(\|e_j\|^2_{H_0^1(\Omega)}=\mu_j+\alpha\) for all \(j=1,\dots,P\) (see~\eqref{H_0^1-norm-e_k}). Thus, it follows that
\begin{equation}\label{lem8-est3-g}
    \int_0^T{\left\| g\left(\widetilde{u}_{\theta}(t, \cdot)\right) - g\left(u_{\theta_c}^*(t, \cdot)\right)\right\|^2_{L^2(\Omega)}\,dt} \le L_g^2\sum_{j=1}^P{(\mu_j+\alpha)\left\|a_j(\cdot;\theta)-a_j^*(\cdot;\theta_c)\right\|^2_{L^2(0,T)}}.
\end{equation}
From the hypothesis, we have \(a_j^*(\cdot;\theta_c) \in W^{2,2}(0,T)\) for all \(j = 1, \dots, P\). By Lemma~\ref{lem:sobolev-UAP}, the neural network class \(\mathcal N_\sigma\) is dense in \(W^{2,2}(0,T)\). Thus, for a given tolerance \(\epsilon_0\) (to be defined later), there exist a sufficiently large width \(m_a \in \mathbb{N}^*\) and parameters \(\theta_a \in \left(\mathbb{R}^{m_a}\right)^{3P}\) such that for each \(j=1,\ldots,P\),
\begin{equation}\label{lem8-est5-a*}
    \left\| a_j(\cdot; \theta) - a_j^*(\cdot;\theta_c)\right\|_{W^{2,2}(0,T)} < \epsilon_0,
\end{equation}
where \(\theta:=(\theta_a,\theta_c)\). This directly implies
\begin{align*}
    3\sum_{j=1}^P{(\mu_j+\alpha)^2\left\| a_j(\cdot; \theta) - a_j^*(\cdot;\theta_c)\right\|^2_{L^2(0,T)}} &\le 3\sum_{j=1}^P{(\mu_j+\alpha)^2\left\| a_j(\cdot; \theta) - a_j^*(\cdot;\theta_c) \right\|^2_{W^{2,2}(0,T)}} \notag \\
     &< 3P(\mu_{\text{max}}+\alpha)^2\epsilon_0^2, \\ 
     3\sum_{j=1}^P{\left\| a_j''(\cdot; \theta) - (a_j^*)''(\cdot;\theta_c)\right\|^2_{L^2(0,T)}} &\le  3\sum_{j=1}^P{\left\| a_j(\cdot; \theta) - a_j^*(\cdot;\theta_c) \right\|^2_{W^{2,2}(0,T)}} < 3P\epsilon_0^2,
\end{align*}
where \(\mu_{\text{max}}=\max\limits_{1 \le j \le P}{\mu_j}\). Furthermore, substituting this into \eqref{lem8-est3-g}, we obtain
\begin{equation*}
    3\int_0^T{\left\| g\left(\widetilde{u}_{\theta}(t, \cdot)\right) - g\left(u_{\theta_c}^*(t, \cdot)\right)\right\|^2_{L^2(\Omega)}\,dt} \le 3PL_g^2(\mu_{\text{max}}+\alpha)\epsilon_0^2.
\end{equation*}
	Now, we choose \(\epsilon_0>0\) so that
	\[
	3P(\mu_{\text{max}}+\alpha)^2\epsilon_0^2\le\frac{\epsilon}{3},
	\qquad
	3P\epsilon_0^2\le\frac{\epsilon}{3},
	\qquad
	3PL_g^2(\mu_{\text{max}}+\alpha)\epsilon_0^2\le\frac{\epsilon}{3}.
	\]
 To this end, we define
\begin{equation*}
    \epsilon_0:=\sqrt{\epsilon}\min\left\{\dfrac{1}{3(\mu_{\text{max}}+\alpha)\sqrt{P}}, \dfrac{1}{3\sqrt{P}},\dfrac{1}{3L_g\sqrt{P}\sqrt{\mu_{\text{max}}+\alpha}}\right\}.
\end{equation*}
Substituting this into \eqref{lem8-est1}, we conclude that
\begin{equation*}
    \sum_{j=1}^P \| R_j(\cdot; \theta) \|_{L^2(0,T)}^2 < 3\,\frac{\epsilon}{3} = \epsilon.
\end{equation*}

\end{proof}

With the approximation properties for both the control input (Theorem~\ref{thm:approx_con}) and the PDE residual (Theorem~\ref{thm:res-bound}) independently established, we now synthesize these results. The forthcoming main theorem demonstrates that by employing a structural unification technique, a single, global neural operator architecture can be rigorously constructed to bound the overall solution error.

\begin{theorem}[Solution approximation error]\label{thm:err-sol}
Assume that the controlled wave equation admits an exact solution \(\Phi\in C([0,T];\mathcal E)\) corresponding to an internal control \(\mathbf F\,\mathbf{1}_\omega\in L^2((0,T);\mathcal E)\). Furthermore, suppose that the nonlinear operator is given by \(G(\Phi)=(0,g(u))\), where \(g:H_0^1(\Omega)\to L^2(\Omega)\) is Lipschitz continuous with constant \(L_g\). Let \(\sigma\) satisfy Assumption~\ref{ass:sobolev-UAP}. Then, for every \(\epsilon>0\), there exist a truncation orders \(P\in\mathbb N^*\), a network width \(m\in\mathbb N^*\), and parameters
\[
	\theta=(\theta_a,\theta_c)\in(\mathbb R^m)^{6P}
\]
such that the Neural-Spectral state-control pair \(\left(\widetilde\Phi_\theta,\widetilde{\mathbf F}_{\theta_c}\mathbf{1}_\omega\right)\) given by
\begin{align*}
	\widetilde\Phi_\theta(t,x)
	&=
	\sum_{j=1}^P \left(a_j(t;\theta), a_j'(t;\theta)\right)e_j(x), \\
    \widetilde{\mathbf F}_{\theta_c}(t,x)\mathbf{1}_\omega
	&=
	\left(0\,,\mathbf{1}_\omega\,\sum_{j=1}^{P}
	c_j(t;\theta_c)e_j(x)\right),
\end{align*}
simultaneously satisfies the state approximation bound
\[
\sup_{t\in[0,T]}
\left\|\Phi(t,\cdot)-\widetilde\Phi_\theta(t,\cdot)\right\|_{\mathcal E}<\epsilon
\]
and the control approximation bound
\[
\left\|\mathbf F\mathbf {1}_\omega-\widetilde{\mathbf F}_{\theta_c}\mathbf{1}_\omega\right\|_{L^2((0,T);\mathcal E)}
<\epsilon.
\]
\end{theorem}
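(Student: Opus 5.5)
We split the state error into three pieces through two intermediate objects. The first is the true (infinite-dimensional) solution \(\widetilde\Psi_{\theta_c}\) of \eqref{Wave4_approx}, driven by the neural control. The second is its Galerkin-projected trajectory \(\Phi^*_{P}(t):=\sum_{j=1}^P (a_j^*(t;\theta_c),(a_j^*)'(t;\theta_c))e_j\), whose coefficients are the \(a_j^*\) of Lemma~\ref{lem:reg_ai*}. Thus
\[
\left\|\Phi-\widetilde\Phi_\theta\right\|_{\mathcal E}\le \left\|\Phi-\widetilde\Psi_{\theta_c}\right\|_{\mathcal E}+\left\|\widetilde\Psi_{\theta_c}-\Phi^*_P\right\|_{\mathcal E}+\left\|\Phi^*_P-\widetilde\Phi_\theta\right\|_{\mathcal E},
\]
and we make each term smaller than \(\epsilon/3\) uniformly in \(t\). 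The order of choices matters. We first fix \(\theta_c\), with its own truncation \(P_c\) and width \(m_c\), by Theorem~\ref{thm:approx_con}. Then we choose the state truncation \(P\ge P_c\). Then we choose \(\theta_a\). Finally we unify the widths and truncations into a single pair \((P,m)\) by padding: set \(m:=\max(m_a,m_c)\) and use zero output weights. Coefficient networks \(c_j\) for \(P_c<j\le P\) are set identically zero. Padding leaves every function unchanged, so no bound is affected.

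\textbf{Step 1 (stability in the control).} Subtract \eqref{Wave4} and \eqref{Wave4_approx}. Since \(A\) is skew-adjoint, \(e^{-tA}\) is unitary on \(\mathcal E\), and Duhamel's formula gives
\[
\|\Phi(t)-\widetilde\Psi_{\theta_c}(t)\|_{\mathcal E}\le \int_0^t L_g\|\Phi(s)-\widetilde\Psi_{\theta_c}(s)\|_{\mathcal E}\,ds+\sqrt T\,\|\mathbf F\mathbf 1_\omega-\widetilde{\mathbf F}_{\theta_c}\mathbf 1_\omega\|_{L^2((0,T);\mathcal E)}.
\]
Gronwall then yields the bound \(\sqrt T e^{L_gT}\delta\) for this term, where \(\delta\) is the control error. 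We run Theorem~\ref{thm:approx_con} with tolerance \(\delta=\min\{\epsilon,\ \epsilon/(3\sqrt Te^{L_gT})\}\), which simultaneously gives the control bound claimed in the statement.

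\textbf{Step 2 (spectral truncation of \(\widetilde\Psi_{\theta_c}\)).} This is the main obstacle, because a pointwise argument is not enough: we need convergence uniformly in \(t\), and the projected ODE is not obviously the exact projection. First, for a fixed continuous trajectory \(\widetilde\Psi_{\theta_c}\in C([0,T];\mathcal E)\), the set \(\{\widetilde\Psi_{\theta_c}(t):t\in[0,T]\}\) is compact in \(\mathcal E\). Strong convergence \(\Pi_P\to I\) is uniform on compact sets, so \(\sup_t\|(I-\Pi_P)\widetilde\Psi_{\theta_c}(t)\|_{\mathcal E}\to0\). Second, we relate \(\Pi_P\widetilde\Psi_{\theta_c}\) to \(\Phi_P^*\). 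Testing \eqref{Wave4_approx} against \(\phi_i\), as in Lemma~\ref{lem:reg_ai*}, shows that the coefficients of \(\Pi_P\widetilde\Psi_{\theta_c}\) satisfy the modal equations exactly, with forcing \(\langle g(\widetilde\psi_{\theta_c}),e_j\rangle\). If the \(a_j^*\) are interpreted as these projected coefficients (as in Lemma~\ref{lem:reg_ai*}), the second term is just the tail \((I-\Pi_P)\widetilde\Psi_{\theta_c}\), and it is small for large \(P\). If instead \(a_j^*\) solve the closed Galerkin system \eqref{Wave8}, we compare the two by a Gronwall argument on \(\mathscr V_P\). The difference satisfies a unitary modal dynamics, forced by \(\pi_P[g(\widetilde\psi_{\theta_c})-g(\psi_P)]\), which is bounded by \(L_g\) times the difference plus \(L_g\) times the tail, and the tail is already uniformly small. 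Either way we fix \(P\) so that this term is below \(\epsilon/3\).

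\textbf{Step 3 (neural approximation of the coefficients).} With \(P\) fixed, Lemma~\ref{lem:reg_ai*} gives \(a_j^*\in W^{2,2}(0,T)\), so Lemma~\ref{lem:sobolev-UAP} provides networks with \(\|a_j-a_j^*\|_{W^{2,2}}<\eta\). By the one-dimensional Sobolev embedding \(W^{2,2}(0,T)\hookrightarrow C^1[0,T]\), with constant \(C_T\),
\[
\sup_t\|\Phi_P^*(t)-\widetilde\Phi_\theta(t)\|_{\mathcal E}^2\le C_T^2\sum_{j=1}^P(\mu_j+\alpha+1)\eta^2 .
\]
Choosing \(\eta\) small enough makes this term below \(\epsilon/3\). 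By Theorem~\ref{thm:res-bound}, the same choice also keeps the residual small, though that is not needed for the statement. Summing the three estimates proves the claim.
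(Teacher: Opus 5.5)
Your proposal is correct and is essentially the paper's own proof. It uses the same three-term splitting through \(\widetilde\Psi_{\theta_c}\) and \(\Pi_P\widetilde\Psi_{\theta_c}\), with Duhamel's formula, unitarity of \(e^{-tA}\) and Gr\"onwall for the first term, uniform convergence of \(\Pi_P\) on the compact trajectory for the second, and \(W^{2,2}\)-density with the embedding \(W^{2,2}(0,T)\hookrightarrow C^1[0,T]\) for the third; your explicit ordering of choices (\(\theta_c\), then \(P\ge P_c\), then \(\theta_a\), then padding) is a cleaner presentation of the paper's final unification step. The closed-Galerkin branch of your Step 2 is unnecessary: the paper, like Lemma~\ref{lem:reg_ai*}, takes \(a_j^*(t;\theta_c)=\langle\widetilde\psi_{\theta_c}(t,\cdot),e_j\rangle_{L^2(\Omega)}\), so your \(\Phi^*_P\) is exactly \(\Pi_P\widetilde\Psi_{\theta_c}\) and that step reduces to the compactness argument.
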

\begin{proof}
    Let \(\widetilde{\Psi}_{\theta_c}=\left(\widetilde{\psi}_{\theta_c},\partial_t\widetilde{\psi}_{\theta_c}\right)\) be the solution of \eqref{Wave4_approx}
    and \(\widetilde{\Phi}_{P}(t,\cdot)=\Pi_{P}\left(\widetilde{\Psi}_{\theta_c}(t,\cdot)\right)\) for each \(t \in (0,T)\) and \(P \in \mathbb{N}^*\). The triangle inequality in \(\mathcal E\) yields 
    \begin{equation*}
        \left\| \Phi(t,\cdot) - \widetilde{\Phi}_{\theta}(t,\cdot)\right\|_{\mathcal E} \le \left\| \Phi(t,\cdot) - \widetilde{\Psi}_{\theta_c}(t,\cdot)\right\|_{\mathcal E} + \left\| \widetilde{\Psi}_{\theta_c}(t,\cdot) - \widetilde{\Phi}_{P}(t,\cdot)\right\|_{\mathcal E} + \left\| \widetilde{\Phi}_{P}(t,\cdot) - \widetilde{\Phi}_{\theta}(t,\cdot)\right\|_{\mathcal E}.
    \end{equation*}
    To bound \( \sup\limits_{t \in [0,T]}\left\| \Phi(t,\cdot) - \widetilde{\Psi}_{\theta_c}(t,\cdot)\right\|_{\mathcal E}\), subtracting \eqref{Wave4_approx} from \eqref{Wave4} and setting \(\Xi:=\Phi-\widetilde{\Psi}_{\theta_c}\), we obtain \(\Xi(0,\cdot)=0\) and
    \begin{equation*}
        \partial_t{\Xi}+A\Xi+\left(\mathbf{F}\,\mathbf{1}_{\omega}-\widetilde{\mathbf{F}}_{\theta_c}\mathbf{1}_\omega\right)+\left(G(\Phi)-G\left(\widetilde{\Psi}_{\theta_c}\right)\right)=0.
    \end{equation*}    

    To rigorously treat the operator functions, we make use of the standard spectral functional calculus for the operator \(A\). Following the framework detailed in \cite[Chapter 7]{thomee2006galerkin}, for arbitrary function \(h\) defined on the spectrum \(\sigma(A)=\{\lambda_j\}_{j=1}^{\infty}\) with corresponding orthonormal eigenfunctions \(\{\phi_j\}_{j=1}^{\infty}\), we define the operator \(h(A)\) as (see \cite[Eq. (7.5)]{thomee2006galerkin})
    \begin{equation}\label{def-h}
        h(A)\Psi:=\sum_{j=1}^{\infty}{h(\lambda_j)\left\langle \Psi, \phi_j \right\rangle}\phi_j,\quad\Psi \in \mathcal E.
    \end{equation}
    Consequently, by Parseval's identity, the operator norm of \(h(A)\) is explicitly given by (see \cite[Eq. (7.6)]{thomee2006galerkin})
    \begin{equation}\label{def-norm-h}
        \|h(A)\|=\sup_{j\in\mathbb{N}^*}|h(\lambda_j)|=\sup_{\lambda\in\sigma(A)}|h(\lambda)|.
    \end{equation}
    Moreover, an application of \cite[Proposition~2.6.9]{cazenave1998semi} yields \(A\) is \(m\)-dissipative with dense domains. Thus, using \cite[Lemma~4.1.1]{cazenave1998semi} and recalling that \(\Xi(0,\cdot)=0\), we arrive at
    \begin{align*}
        \Xi(t,\cdot)&=e^{-tA}\Xi(0,\cdot)-\int_{0}^{t}{e^{-(t-s)A}\left[\left(\mathbf{F}(s,\cdot)\,\mathbf{1}_{\omega}-\widetilde{\mathbf{F}}_{\theta_c}(s,\cdot)\,\mathbf{1}_\omega\right)+\left(G\left(\Phi(s,\cdot)\right)-G\left(\widetilde{\Psi}_{\theta_c}(s,\cdot)\right)\right)\right]\,ds} \\
        &=-\int_{0}^{t}{e^{-(t-s)A}\left[\left(\mathbf{F}(s,\cdot)\,\mathbf{1}_{\omega}-\widetilde{\mathbf{F}}_{\theta_c}(s,\cdot)\,\mathbf{1}_\omega\right)+\left(G\left(\Phi(s,\cdot)\right)-G\left(\widetilde{\Psi}_{\theta_c}(s,\cdot)\right)\right)\right]\,ds}
        ,
    \end{align*}
    where the semi-group \(e^{-tA}\) is defined by
    \begin{equation}\label{def:semi-group}
        e^{-tA}\Psi=\sum_{j=1}^{\infty}{e^{-\lambda_jt}\left\langle \Psi, \phi_j \right\rangle}\phi_j.
    \end{equation}
    Since all the eigenvalues \(\lambda_j\) of \(A\) are purely imaginary, it follows that \(\left|e^{-t\lambda_j}\right|=1\) for all \(j \in \mathbb{N}^*\), and thus
    \begin{equation*}
        \left\|e^{-tA}\right\|=\sup\limits_{j\in\mathbb{N}^*}\left|e^{-t\lambda_j}\right|=1.
    \end{equation*}
    Therefore, it holds that
    \begin{align*}
        \|\Xi(t,\cdot)\|_{\mathcal E}&\le \int_0^t{\left\|e^{-(t-s)A}\right\|\left\|\left(\mathbf{F}(s,\cdot)\,\mathbf{1}_{\omega}-\widetilde{\mathbf{F}}_{\theta_c}(s,\cdot)\,\mathbf{1}_\omega\right)+\left(G\left(\Phi(s,\cdot)\right)-G\left(\widetilde{\Psi}_{\theta_c}(s,\cdot)\right)\right)\right\|_{\mathcal E}\,ds} \\
        &=\int_0^t{\left\|\left(\mathbf{F}(s,\cdot)\,\mathbf{1}_{\omega}-\widetilde{\mathbf{F}}_{\theta_c}(s,\cdot)\,\mathbf{1}_\omega\right)+\left(G\left(\Phi(s,\cdot)\right)-G\left(\widetilde{\Psi}_{\theta_c}(s,\cdot)\right)\right)\right\|_{\mathcal E}\,ds} \\
        &\le \int_0^T{\left\|\mathbf{F}(s,\cdot)\,\mathbf{1}_{\omega}-\widetilde{\mathbf{F}}_{\theta_c}(s,\cdot)\,\mathbf{1}_\omega\right\|_{\mathcal E}\,ds}+\int_0^t{\left\|G\left(\Phi(s,\cdot)\right)-G\left(\widetilde{\Psi}_{\theta_c}(s,\cdot)\right)\right\|_{\mathcal E}\,ds}
    \end{align*}
    For the first term, by H\"older's inequality, we obtain
    \begin{align*}
        \int_0^T{\left\|\mathbf{F}(s,\cdot)\,\mathbf{1}_{\omega}-\widetilde{\mathbf{F}}_{\theta_c}(s,\cdot)\,\mathbf{1}_\omega\right\|_{\mathcal E}\,ds}
        &\le \left(\int_0^T{\left\|\mathbf{F}(s,\cdot)\,\mathbf{1}_{\omega}-\widetilde{\mathbf{F}}_{\theta_c}(s,\cdot)\,\mathbf{1}_\omega\right\|_{\mathcal E}^2\,ds}\right)^{\frac{1}{2}}\left(\int_0^T{1^2\,ds}\right)^{\frac{1}{2}} \\
        & = T^{\frac{1}{2}}\left\|\mathbf{F}\,\mathbf{1}_{\omega}-\widetilde{\mathbf{F}}_{\theta_c}\mathbf{1}_\omega\right\|_{L^2((0,T);\mathcal E)}
    \end{align*}
    For the second term, using \(G=(0,g)\) and the Lipschitz continuity of \(g\), we have
    \begin{align*}
        \left\|G\left(\Phi(s,\cdot)\right)-G\left(\widetilde{\Psi}_{\theta_c}(s,\cdot)\right)\right\|_{\mathcal E} &= \left\|g\left(u(s,\cdot)\right)-g\left(\widetilde{\psi}_{\theta_c}(s,\cdot)\right)\right\|_{L^2(\Omega)}
        \\ &\le L_g\left\|u(s,\cdot)-\widetilde{\psi}_{\theta_c}(s,\cdot)\right\|_{H_0^1(\Omega)}
        \\ &\le L_g\left\|\Phi(s,\cdot)-\widetilde{\Psi}_{\theta_c}(s,\cdot)\right\|_{\mathcal E}
    \end{align*}
    Combining these results, we arrive at
    \begin{equation*}
        \|\Xi(t,\cdot)\|_{\mathcal E} \le T^{\frac{1}{2}}\left\|\mathbf{F}\,\mathbf{1}_{\omega}-\widetilde{\mathbf{F}}_{\theta_c}\mathbf{1}_\omega\right\|_{L^2((0,T);\mathcal E)}+\int_0^t{L_g\left\|\Xi(s,\cdot)\right\|_{\mathcal E}\,ds}.
    \end{equation*}
   By Gr\"onwall's inequality, this yields \(\|\Xi(t,\cdot)\| \le T^{\frac{1}{2}}\left\|\mathbf{F}\,\mathbf{1}_{\omega}-\widetilde{\mathbf{F}}_{\theta_c}\mathbf{1}_\omega\right\|_{L^2((0,T);\mathcal E)}e^{L_gt}\), or equivalently, 
    \begin{equation*}
        \left\|\Phi(t,\cdot)-\widetilde{\Psi}_{\theta_c}(t,\cdot)\right\|_{\mathcal E} \le T^{\frac{1}{2}}\left\|\mathbf{F}\,\mathbf{1}_{\omega}-\widetilde{\mathbf{F}}_{\theta_c}\mathbf{1}_\omega\right\|_{L^2((0,T);\mathcal E)}e^{L_gt}.
    \end{equation*}
    Hence, taking the supremum over \((0,T)\), we conclude that
    \begin{equation*}
        \sup\limits_{t \in [0,T]}\left\|\Phi(t,\cdot)-\widetilde{\Psi}_{\theta_c}(t,\cdot)\right\|_{\mathcal E} \le T^{\frac{1}{2}}\left\|\mathbf{F}\,\mathbf{1}_{\omega}-\widetilde{\mathbf{F}}_{\theta_c}\mathbf{1}_\omega\right\|_{L^2((0,T);\mathcal E)}e^{L_gT}.
    \end{equation*}
    By Theorem~\ref{thm:approx_con}, one can find \(P_c\), \(m_c\) large enough and \(\theta_c \in \left(\mathbb{R}^{m_c}\right)^{3P_c}\) such that \(\left\|\mathbf{F}\,\mathbf{1}_{\omega}-\widetilde{\mathbf{F}}_{\theta_c}\mathbf{1}_\omega\right\|_{L^2((0,T);\mathcal E)}<\min\left\{\epsilon,\dfrac{\epsilon}{3T^{\frac{1}{2}}e^{L_gT}}\right\}\), which gives
    \begin{align}
        \left\|\mathbf{F}\,\mathbf{1}_{\omega}-\widetilde{\mathbf{F}}_{\theta_c}\mathbf{1}_\omega\right\|_{L^2((0,T);\mathcal E)}&<\epsilon, \label{theo3-est4.1}\\
        \sup\limits_{t \in [0,T]}\left\|\Phi(t,\cdot)-\widetilde{\Psi}_{\theta_c}(t,\cdot)\right\|_{\mathcal E} &< \dfrac{\epsilon}{3}. \label{theo3-est4.2}
    \end{align}
    For arbitrary \(P \in \mathbb{N}^*\), note that \(\widetilde{\Phi}_{P}(t,\cdot)=\Pi_{P}\left(\widetilde{\Psi}_{\theta_c}(t,\cdot)\right)\) for all \(t \in (0,T)\). 
    	Since
    	\[
    	\widetilde\Psi_{\theta_c}\in C([0,T]; \mathcal E),
    	\]
    	the set
    	\[
    	K:=\left\{\widetilde\Psi_{\theta_c}(t,\cdot):t\in[0,T]\right\}
    	\]
    	is compact in \(\mathcal E\). The orthogonal projections \(\Pi_P\) satisfy
    	\[
    	\Pi_P v\to v
    	\quad\text{in }\mathcal E
    	\]
    	for every \(v\in H\) (see \cite[Theorem~5.9]{brezis2011functional}), and \(\|\Pi_P\|_{\mathcal L(\mathcal E)}\le1\) (see \cite[Theorem~4.10]{fabian2011}). Therefore, the convergence is uniform on compact subsets of \(H\) (according to \cite[Lemma~3.7]{kato1995}). Hence
    	\[
    	\sup_{t\in[0,T]}
    	\left\|\widetilde\Psi_{\theta_c}(t,\cdot)-\Pi_P\widetilde\Psi_{\theta_c}(t,\cdot)\right\|_{\mathcal E}
    	\to0
    	\qquad\text{as }P\to\infty .
    	\]
    	Consequently, there exists \(P_p\in\mathbb N^*\) such that, for all
    	\(P\ge P_p\),
    	\begin{equation}\label{theo3-est3}
    	\sup_{t\in[0,T]}
    	\left\|\widetilde\Psi_{\theta_c}(t,\cdot)-\widetilde\Phi_P(t,\cdot)\right\|_{\mathcal E}
    	<
    	\frac{\epsilon}{3}.
    	\end{equation}
      
    In the next step, we establish the estimate for \( \sup\limits_{t \in [0,T]}\left\| \widetilde{\Phi}_{P}(t,\cdot) - \widetilde{\Phi}_{\theta}(t,\cdot)\right\|_{\mathcal E}\). Denoting \(a_j^*(t;\theta_c) := \left\langle\widetilde{\psi}_{\theta_c}(t,\cdot),e_j\right\rangle_{L^2(\Omega)}\) for \(j = 1,\ldots,P\), since \(\widetilde{\Phi}_{P}(t,\cdot) = \Pi_{P}\left(\widetilde{\Psi}_{\theta_c}(t,\cdot)\right)\), we have 
    \begin{equation*}
        \widetilde{\Phi}_{P}(t,x) = \displaystyle\sum\limits_{j=1}^{P}{\left(a_j^*(t;\theta_c), \left(a_j^*\right)'(t;\theta_c)\right)e_j(x)}.
    \end{equation*}
    Recalling that \(\widetilde{\Phi}_{\theta}(t,x)=\displaystyle\sum_{j=1}^{P}{\left(a_j(t;\theta), a_j'(t;\theta)\right)e_j(x)}\) and \(\|e_j\|^2_{H_0^1(\Omega)}=\mu_j+\alpha\) (see~\eqref{H_0^1-norm-e_k}), by the Parseval's identity, we have
    \begin{align}\label{theo3-est5}
        \left\| \widetilde{\Phi}_{P}(t,\cdot) - \widetilde{\Phi}_{\theta}(t,\cdot)\right\|^2_{\mathcal E} &=\sum_{j=1}^{P}{\left(\left|a_j^*(t;\theta_c)-a_j(t;\theta)\right|^2\|e_j\|^2_{H_0^1(\Omega)}+\left|\left(a_j^*\right)'(t;\theta_c)-a_j'(t;\theta)\right|^2\|e_j\|^2_{L^2(\Omega)}\right)}\notag\\
        &=\sum_{j=1}^{P}{\left((\mu_j+\alpha)\left|a_j^*(t;\theta_c)-a_j(t;\theta)\right|^2+\left|\left(a_j^*\right)'(t;\theta_c)-a_j'(t;\theta)\right|^2\right)}.
    \end{align}
    By \cite[Theorem~4.12]{adams2003}, we have the continuous embedding \(W^{2,2}(0,T) \hookrightarrow C^1[0,T]\). Thus, there exists a constant \(C>0\) depending on \(T\), such that
    \begin{equation*}
        \sup\limits_{t \in [0,T]}{\left(|v(t)|+\left|v'(t)\right|\right)} \le C\|v\|_{W^{2,2}(0,T)}, \quad \text{for all } v \in W^{2,2}(0,T).
    \end{equation*}
    Consequently, for each \(j=1,\dots,P\), since \(a_j^*(\cdot;\theta_c) - a_j(\cdot;\theta)\in W^{2,2}(0,T)\), note that \(\mu_j+\alpha>1\), we have
    \begin{align}\label{theo3-est6}
        &(\mu_j+\alpha)\left|a_j^*(t;\theta_c)-a_j(t;\theta)\right|^2+\left|\left(a_j^*\right)'(t;\theta_c)-a_j'(t;\theta)\right|^2 \notag\\
        &\le (\mu_j+\alpha)\left(\left|a_j^*(t;\theta_c)-a_j(t;\theta)\right|^2+\left|\left(a_j^*\right)'(t;\theta_c)-a_j'(t;\theta)\right|^2\right)\notag\\
        &\le 2C^2(\mu_{\text{max}}+\alpha)\left\|a_j^*(\cdot;\theta_c)-a_j(\cdot;\theta)\right\|^2_{W^{2,2}(0,T)}.
    \end{align}
    From \eqref{theo3-est5} and \eqref{theo3-est6}, it holds that
    \begin{equation*}
        \sup\limits_{t \in [0,T]}{\left\| \widetilde{\Phi}_{P}(t,\cdot) - \widetilde{\Phi}_{\theta}(t,\cdot)\right\|^2_{\mathcal E}} \le 2C^2(\mu_{\text{max}}+\alpha)\sum_{j=1}^P{\left\|a_j^*(\cdot;\theta_c)-a_j(\cdot;\theta)\right\|^2_{W^{2,2}(0,T)}}.
    \end{equation*}
    According to equation~\eqref{lem8-est5-a*} in the proof of Theorem~\ref{thm:res-bound}, there exists \(m_a \in \mathbb{N}^*\) and \(\theta_a \in \left(\mathbb{R}^{m_a}\right)^{3P}\) such that 
    \begin{equation*}
        \left\|a_j^*(\cdot;\theta_c)-a_j(\cdot;\theta)\right\|_{W^{2,2}(0,T)} < \dfrac{\epsilon}{3\sqrt{2}C\sqrt{P}\sqrt{\mu_{\text{max}}+\alpha}}.
    \end{equation*}
    Thus, we conclude that 
    \begin{equation}\label{theo3-est7}
        \sup\limits_{t \in [0,T]}\left\| \widetilde{\Phi}_{P}(t,\cdot) - \widetilde{\Phi}_{\theta}(t,\cdot)\right\|_{\mathcal E} < \left(2C^2(\mu_{\text{max}}+\alpha)\sum_{j=1}^P{\dfrac{\epsilon^2}{18C^2P\left(\mu_{\text{max}}+\alpha\right)}}\right)^{\frac{1}{2}}=\dfrac{\epsilon}{3}.
    \end{equation}

	Set
	\[
	P:=\max\{P_c,P_p\},
	\qquad
	m:=\max\{m_c,m_a\}.
	\]
	If necessary, we extend the coefficient networks by adding zero output weights so that all networks have width \(m\). For spectral indices beyond the previous truncation level, we set the corresponding coefficients identically equal to zero. This preserves the previously constructed approximations while placing all parameters in the common space
	\[
	\theta=(\theta_a,\theta_c)\in(\mathbb R^m)^{6P}.
	\]
    Then, from \eqref{theo3-est4.1}, \eqref{theo3-est4.2}, \eqref{theo3-est3}, and \eqref{theo3-est7}, we deduce that
    \begin{align*}
        \left\|\mathbf{F}\,\mathbf{1}_{\omega}-\widetilde{\mathbf{F}}_{\theta_c}\mathbf{1}_\omega\right\|_{L^2((0,T);\mathcal E)}&<\epsilon, \\
        \sup\limits_{t \in [0,T]}{\left\|\Phi(t,\cdot) - \widetilde{\Phi}_{\theta}(t,\cdot)\right\|_{\mathcal E}} &< \dfrac{\epsilon}{3}+\dfrac{\epsilon}{3}+\dfrac{\epsilon}{3}=\epsilon.
    \end{align*}
\end{proof}

	\begin{remark}[Connection to the original control problem]
		Theorem~\ref{thm:err-sol} shows that the exact controlled trajectory \(\Phi\) can be approximated uniformly in time, in the energy norm, by neural spectral approximations \(\widetilde\Phi_\theta\). Together with Theorem~\ref{thm:approx_con}, this means that, for every tolerance \(\epsilon>0\), one can choose \(P\), \(m\), and parameters
		\(\theta=(\theta_a,\theta_c)\) such that
		\[
		\left\|\widetilde{\mathbf F}_{\theta_c}\mathbf{1}_\omega-\mathbf F\mathbf 1_\omega\right\|_{L^2((0,T);\mathcal E)}
		<\epsilon
		\]
		and
		\[
		\sup_{t\in[0,T]}\left\|\Phi(t,\cdot)-\widetilde\Phi_\theta(t,\cdot)\right\|_{\mathcal E}
		<\epsilon.
		\]
		In particular, since \(\Phi(T,\cdot)=\Phi_T\), we also obtain
		\[
		\left\|\widetilde\Phi_\theta(T,\cdot)-\Phi_T\right\|_{\mathcal E}<\epsilon .
		\]
		Thus, the neural formulation is consistent with the original exact controllability problem in the sense that exact control-state pairs can be approximated arbitrarily well by Neural-Spectral control-state pairs.
	\end{remark}

\subsection{A conditional computable error estimate}\label{sub:conditional_error_estimate}

We now record a conditional error estimate that separates the four sources of error in the fully computable Neural-Spectral method: spectral truncation, neural approximation, quadrature/collocation, and optimization error. This result is intended as a numerical analysis interpretation of the training
procedure, rather than as a global convergence theorem for the nonconvex optimizer.

Let \(\Pi_P:\mathcal E\to\mathscr V_P\) denote the orthogonal projection onto the first \(P\) wave modes, and let \(\Phi=(u,\partial_t{u})\) be the exact controlled trajectory corresponding to an exact control \(\mathbf{F}=(0,f\mathbf 1_\omega)\). For a trained Neural-Spectral state-control pair
\begin{equation*}
\widetilde\Phi_{\theta}^{P,m,Q}(t,x)
=
\sum_{j=1}^{P}
\left(a_j(t;\theta_a),a_j'(t;\theta_a)\right)e_j(x),
\qquad
\widetilde{\mathbf{F}}_{\theta}^{P,m,Q}(t,x)\,\mathbf{1}_\omega
=\left(0\,,\mathbf{1}_\omega\sum_{j=1}^{P}c_j(t;\theta_c)e_j(x)\right),
\end{equation*}
we define the continuous residual
\begin{equation}\label{def-res}
R_j(t;\theta)
=
a_j''(t;\theta_a)+\omega_j^2a_j(t;\theta_a)
+
Q_j(t;\theta_a)
+
\sum_{\ell=1}^P M_{j\ell}\,c_\ell(t;\theta_c),
\qquad j=1,\ldots,P,
\end{equation}
where \(\omega_j=\sqrt{\mu_j+\alpha}\), \(M_{j\ell} = \displaystyle\int_\omega e_j(x)e_\ell(x)\,dx.\) and \(Q_j(\cdot;\theta)\) denotes the projected nonlinear term as follows
\begin{equation*}
    Q_j(t;\theta):=\left\langle g\left(\sum_{\ell=1}^P{a_\ell(t;\theta)e_\ell}\right),e_j\right\rangle_{L^2(\Omega)}.
\end{equation*}
In the linear case,
\(Q_j\equiv0\).

The computable discrete loss is
\begin{equation}\label{eq:discrete_loss_PQ}
\begin{aligned}
\mathcal L_{P,Q}(\theta)
:={}&
\sum_{q=1}^{Q} w_q
\sum_{j=1}^{P}
\left|R_j(t_q;\theta)\right|^2
\\
&+
\lambda_{\mathrm{IC}}
\left\|
\widetilde\Phi_{\theta}^{P,m,Q}(0,\cdot)-\Pi_P\Phi_0
\right\|_{\mathcal E}^2
+
\lambda_{\mathrm{TC}}
\left\|
\widetilde\Phi_{\theta}^{P,m,Q}(T,\cdot)-\Pi_P\Phi_T
\right\|_{\mathcal E}^2 .
\end{aligned}
\end{equation}
Here \(\{t_q,w_q\}_{q=1}^{Q}\) is the quadrature rule used to approximate the \(L^2(0,T)\)-norm of the residual.

We introduce the following error quantities:
\begin{align}
E_{\mathrm{spec}}(P)
&:=\sup_{t\in[0,T]}\left\|
\Phi(t,\cdot)-\Pi_P\Phi(t,\cdot)
\right\|_{\mathcal E},
\label{eq:E_spec}
\\
E_{\mathrm{net}}(m)
&:=
\inf_{\theta\in\Theta_{P,m}}
\left[
\sum_{j=1}^{P}
\|R_j(\cdot;\theta)\|_{L^2(0,T)}^2
+
\left\|
\widetilde\Phi_\theta^{P,m,Q}(0,\cdot)-\Pi_P\Phi_0
\right\|_{\mathcal E}^2
+
\left\|
\widetilde\Phi_\theta^{P,m,Q}(T,\cdot)-\Pi_P\Phi_T
\right\|_{\mathcal E}^2
\right]^{1/2},
\label{eq:E_net}
\\
E_{\mathrm{quad}}(Q)
&:=
\sup_{\theta\in\Theta_{P,m}}
\left|
\sum_{j=1}^{P}\|R_j(\cdot;\theta)\|_{L^2(0,T)}^2
-
\sum_{q=1}^{Q}w_q\sum_{j=1}^{P}|R_j(t_q;\theta)|^2
\right|^{1/2},
\label{eq:E_quad}
\\
E_{\mathrm{opt}}(\theta)
&:=
\left[
\mathcal L_{P,Q}(\theta)
-
\inf_{\vartheta\in\Theta_{P,m}}\mathcal L_{P,Q}(\vartheta)
\right]_+^{1/2}.
\label{eq:E_opt}
\end{align}
Here \(\Theta_{P,m}\) denotes the class of Neural-Spectral ansatz functions with \(P\) modes and width \(m\).

\begin{theorem}[Conditional computable error estimate]
Assume that \(G(\Phi)=(0,g(u))\), where \(g:H_0^1(\Omega)\to L^2(\Omega)\) is Lipschitz continuous with constant \(L_g\). Let \(\Phi\in C([0,T];\mathcal E)\) be the exact controlled trajectory, and let \(\widetilde\Phi_{\theta}^{P,m,Q}\) be a trained Neural-Spectral trajectory with associated control \(\widetilde{\mathbf{F}}_{\theta}^{P,m,Q}\mathbf{1}_\omega\). Then there exists a constant \(C=C(T,L_g,\lambda_{\mathrm{IC}},\lambda_{\mathrm{TC}})>0\), independent of \(P,m,Q\), such that
\begin{align*}
&\sup_{t\in[0,T]}
\left\|
\Phi(t,\cdot)-\widetilde\Phi_{\theta}^{P,m,Q}(t,\cdot)
\right\|_{\mathcal E}
\le
C\left(
E_{\mathrm{spec}}(P)
+
E_{\mathrm{net}}(m)
+
E_{\mathrm{quad}}(Q)
+
E_{\mathrm{opt}}(\theta)\right) \notag\\ &\qquad\qquad+C\left[\left\|\pi_Pf-\widetilde{f}_{\theta_c}^{P,m,Q}\right\|_{L^2((0,T)\times\Omega)}+\left\|f-\pi_P f\right\|_{L^2((0,T)\times\Omega)}\right].
\end{align*}
In particular, if the trained parameters satisfy
\[
\mathcal L_{P,Q}(\theta)\le \delta^2,
\]
and the quadrature error satisfies \(E_{\mathrm{quad}}(Q)\le\eta_Q\), then
\begin{align*}
\sup_{t\in[0,T]}
&\left\|
\Phi(t,\cdot)-\widetilde\Phi_{\theta}^{P,m,Q}(t,\cdot)
\right\|_{\mathcal E}
\le
C\left(
E_{\mathrm{spec}}(P)
+
E_{\mathrm{net}}(m)
+
\eta_Q
+
\delta
\right)
\notag\\ &\qquad\qquad+C\left[\left\|\pi_Pf-\widetilde{f}_{\theta_c}^{P,m,Q}\right\|_{L^2((0,T)\times\Omega)}+\left\|f-\pi_P f\right\|_{L^2((0,T)\times\Omega)}\right].
\end{align*}
Consequently, the terminal error satisfies
\begin{align*}
&\left\|
\widetilde\Phi_{\theta}^{P,m,Q}(T,\cdot)-\Phi_T
\right\|_{\mathcal E}
\le
C\left(
E_{\mathrm{spec}}(P)
+
E_{\mathrm{net}}(m)
+
\eta_Q
+
\delta\right)
\notag\\ &\qquad\qquad+C\left[\left\|\pi_Pf-\widetilde{f}_{\theta_c}^{P,m,Q}\right\|_{L^2((0,T)\times\Omega)} +\left\|f-\pi_P f\right\|_{L^2((0,T)\times\Omega)}\right].
\end{align*}
\end{theorem}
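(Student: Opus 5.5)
The plan is a stability (Grönwall) argument for the projected dynamics, combined with the triangle inequality
\[
\|\Phi-\widetilde\Phi_\theta\|_{\mathcal E}\le \|\Phi-\Pi_P\Phi\|_{\mathcal E}+\|\Pi_P\Phi-\widetilde\Phi_\theta\|_{\mathcal E}.
\]
The first term is bounded by $E_{\mathrm{spec}}(P)$ by definition. For the second term, set $b_j(t):=\langle u(t),e_j\rangle_{L^2(\Omega)}$. Projecting \eqref{Wave4} onto $\mathscr V_P$ exactly as in the derivation of \eqref{Wave8} (distributionally in time, as in Lemma~\ref{lem:reg_ai*}) gives
\[
b_j''+\omega_j^2 b_j+\langle g(u),e_j\rangle+\langle f\mathbf 1_\omega,e_j\rangle=0 .
\]
Subtracting this from \eqref{def-res}, the error $d_j:=a_j-b_j$ satisfies
\[
d_j''+\omega_j^2 d_j=R_j-\bigl[Q_j(\theta)-\langle g(u),e_j\rangle\bigr]-\langle(\widetilde f_{\theta_c}-f)\mathbf 1_\omega,e_j\rangle .
\]
In first-order form $D=\widetilde\Phi_\theta-\Pi_P\Phi$, this reads $D'+AD=\mathcal R-\Pi_P[G(\widetilde\Phi_\theta)-G(\Phi)]-\Pi_P(0,(\widetilde f-f)\mathbf 1_\omega)$, where $\mathcal R$ collects the residuals (with $\|\mathcal R(t)\|_{\mathcal E}^2=\sum_j|R_j(t)|^2$, using Lemma~\ref{lem:proj_control} and the pairing computed in its proof).

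Next I will use Duhamel's formula with the unitary group $e^{-tA}$ restricted to $\mathscr V_P$, exactly as in the proof of Theorem~\ref{thm:err-sol}. This gives
\[
\|D(t)\|\le \|D(0)\|+\sqrt T\Bigl(\textstyle\sum_j\|R_j\|_{L^2}^2\Bigr)^{1/2}+\sqrt T\|\widetilde f-f\|_{L^2}+\int_0^t L_g\|\widetilde\Phi_\theta-\Phi\|\,ds .
\]
The Lipschitz term is bounded by $L_g(\|D\|+E_{\mathrm{spec}}(P))$, so Grönwall yields
\[
\sup_t\|D\|\le e^{L_gT}\Bigl[\|D(0)\|+\sqrt T\bigl(\textstyle\sum\|R_j\|^2\bigr)^{1/2}+\sqrt T\|\widetilde f-f\|+L_gT\,E_{\mathrm{spec}}\Bigr].
\]
The control term is split as $\|\pi_Pf-\widetilde f\|+\|f-\pi_Pf\|$, which gives the bracketed terms in the statement. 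The initial term satisfies $\|D(0)\|=\|\widetilde\Phi_\theta(0)-\Pi_P\Phi_0\|\le\lambda_{\mathrm{IC}}^{-1/2}\mathcal L_{P,Q}(\theta)^{1/2}$.

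It remains to bound the continuous residual by computable quantities. By definition of $E_{\mathrm{quad}}$,
\[
\textstyle\sum_j\|R_j\|_{L^2}^2\le \sum_q w_q\sum_j|R_j(t_q)|^2+E_{\mathrm{quad}}^2\le \mathcal L_{P,Q}(\theta)+E_{\mathrm{quad}}^2 .
\]
Further, $\mathcal L_{P,Q}(\theta)=E_{\mathrm{opt}}^2+\inf\mathcal L_{P,Q}$, and
\[
\inf_\vartheta\mathcal L_{P,Q}(\vartheta)\le \max(1,\lambda_{\mathrm{IC}},\lambda_{\mathrm{TC}})\bigl(E_{\mathrm{net}}^2+E_{\mathrm{quad}}^2\bigr),
\]
obtained by comparing the discrete and continuous residuals at a near-minimizer of $E_{\mathrm{net}}$. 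Taking square roots and using $\sqrt{a+b}\le\sqrt a+\sqrt b$ gives $C(E_{\mathrm{net}}+E_{\mathrm{quad}}+E_{\mathrm{opt}})$, where $C$ depends only on $T,L_g,\lambda_{\mathrm{IC}},\lambda_{\mathrm{TC}}$. The particular cases then follow. If $\mathcal L_{P,Q}\le\delta^2$, bound $\sqrt{\mathcal L}\le\delta$ directly instead of splitting off $E_{\mathrm{opt}}$. The terminal estimate follows by setting $t=T$ and using $\Phi(T)=\Phi_T$.

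The main obstacle is the regularity and identification step. I must justify that $b_j\in W^{2,2}$ satisfies the projected equation with the true $g(u)$, and that the nonlinear difference is controlled by $\|\widetilde\Phi_\theta-\Phi\|$ rather than only by $\|D\|$. I will handle the latter by writing $\|\widetilde u-u\|_{H^1_0}\le\|D\|+E_{\mathrm{spec}}$, which is why $E_{\mathrm{spec}}$ appears with a Grönwall-inflated constant. I will also check that $\lambda_{\mathrm{IC}}>0$ is needed for $C$ to be finite.
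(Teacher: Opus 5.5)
Your proposal is correct and follows the paper's proof step for step. It makes the same splitting off of $E_{\mathrm{spec}}(P)$ and uses the same projected error equation, with the control error split as $\|\pi_Pf-\widetilde f_{\theta_c}\|+\|f-\pi_Pf\|$. It reaches the same Gr\"onwall bound and uses the same chain $\sum_j\|R_j\|_{L^2(0,T)}^2\le\mathcal L_{P,Q}(\theta)+E_{\mathrm{quad}}^2$, $\mathcal L_{P,Q}(\theta)=\inf\mathcal L_{P,Q}+E_{\mathrm{opt}}^2$, $\inf\mathcal L_{P,Q}\le\max\{1,\lambda_{\mathrm{IC}},\lambda_{\mathrm{TC}}\}E_{\mathrm{net}}^2+E_{\mathrm{quad}}^2$.

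The only difference is minor. The paper derives the stability estimate by differentiating the modal energy $\Upsilon=\tfrac12\sum_j(\omega_j^2|\xi_j|^2+|\xi_j'|^2)$ and applying Gr\"onwall to $\sqrt{\Upsilon}$. You instead use Duhamel's formula with the unitary group on $\mathscr V_P$, as in Theorem~\ref{thm:err-sol}. Both give the identical intermediate bound, and your route conveniently avoids differentiating $\sqrt{\Upsilon}$ where $\Upsilon$ vanishes.
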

\begin{proof}
Let \(a_j(t):=\left\langle u(t,\cdot) , e_j\right\rangle_{L^2(\Omega)}\) and \(c_j(t):=\left\langle f(t,\cdot), e_j\right\rangle_{L^2(\Omega)}\), \(j \in \mathbb{N}^*\). For each \(j=1,\dots,P\), taking the \(L^2(\Omega)\)-inner product of the exact equation \eqref{Wave1} with \(e_j\) and integrating by parts (using \(-\Delta{e_j}=\mu_je_j\) and \(e_j|_{\partial\Omega}=0\)), it holds that
\begin{equation}\label{Wave1_projected}
    a_j''(t) + \omega_j^2 a_j(t) + \left\langle g\left(\sum_{\ell=1}^{\infty} a_\ell(t)e_\ell\right), e_j\right\rangle_{L^2(\Omega)} + \sum_{\ell=1}^\infty M_{j\ell}\,c_\ell(t) = 0, \quad j=1,\ldots,P.
\end{equation} 
By the triangle inequality, we obtain
\begin{equation*}
    \left\|\Phi(t,\cdot) - \widetilde\Phi_{\theta}^{P,m,Q}(t,\cdot)\right\|_{\mathcal{E}} \le \left\|\Phi(t,\cdot) - \Pi_P\Phi(t,\cdot)\right\|_{\mathcal{E}} + \left\|\Pi_P\Phi(t,\cdot) - \widetilde\Phi_{\theta}^{P,m,Q}(t,\cdot)\right\|_{\mathcal{E}}.
\end{equation*}
Because the spectral truncation error is defined in \eqref{eq:E_spec} as
\begin{equation*}
    \sup_{t\in[0,T]} \|\Phi(t,\cdot) - \Pi_P\Phi(t,\cdot)\|_{\mathcal{E}} = E_{\mathrm{spec}}(P),
\end{equation*}
it suffices to evaluate the second term. 
Let \(\xi_j(t;\theta) := a_j(t) - a_j(t;\theta)\), \(j=1,\dots,P\). Subtracting \eqref{def-res} from \eqref{Wave1_projected} yields
\begin{align*}
     \xi_j''(t; \theta) + \omega_j^2 \xi_j(t; \theta) &+ \left\langle g\left(u(t, \cdot)\right) - g\left(\widetilde{u}_{\theta}^{P,m,Q}(t, \cdot)\right), e_j \right\rangle_{L^2(\Omega)} \\
     &\qquad+ \sum_{\ell=1}^P M_{j\ell}\bigl(c_\ell(t)-c_\ell(t;\theta_c)\bigr)+\sum_{\ell=P+1}^\infty M_{j\ell}\,c_\ell(t) = -R_j(t; \theta),
\end{align*}
where \(u(t, \cdot) := \displaystyle\sum_{\ell=1}^{\infty} a_\ell(t)e_\ell\) and \(\widetilde{u}_{\theta}^{P,m,Q}(t, \cdot) := 
\displaystyle\sum_{\ell=1}^P a_\ell(t;\theta)e_\ell\). Using Parseval's identity and noting that \(\|e_j\|^2_{H_0^1(\Omega)} = \mu_j+\alpha = \omega_j^2\) (see~\eqref{H_0^1-norm-e_k}), we find that
\begin{align*}
    \left\| \Pi_P\Phi(t,\cdot) - \widetilde\Phi_{\theta}^{P,m,Q}(t,\cdot)\right\|^2_{\mathcal E} &= \sum_{j=1}^{P} \left( \left|a_j(t)-a_j(t;\theta)\right|^2 \|e_j\|^2_{H_0^1(\Omega)} + \left|a_j'(t)-a_j'(t;\theta)\right|^2 \|e_j\|^2_{L^2(\Omega)} \right) \\
    &= \sum_{j=1}^{P} \left( \omega_j^2 \left|a_j(t)-a_j(t;\theta)\right|^2 + \left|a_j'(t)-a_j'(t;\theta)\right|^2 \right) \\
    &= \sum_{j=1}^{P} \left( \omega_j^2 |\xi_j(t;\theta)|^2 + \left|\xi_j'(t;\theta)\right|^2 \right).
\end{align*}
Define
\begin{equation*}
    \Upsilon(t;\theta) := \dfrac{1}{2} \sum_{j=1}^{P} \left( \omega_j^2 |\xi_j(t;\theta)|^2 + \left|\xi_j'(t;\theta)\right|^2 \right) = \dfrac{1}{2} \left\| \Pi_P\Phi(t,\cdot) - \widetilde\Phi_{\theta}^{P,m,Q}(t,\cdot) \right\|^2_{\mathcal E}.
\end{equation*}
Differentiating with respect to \(t\) then gives
\begin{align*}
    \dfrac{d}{dt} \Upsilon(t;\theta) &= \sum_{j=1}^{P} \left( \omega_j^2 \xi_j(t;\theta)\xi_j'(t;\theta) + \xi_j'(t;\theta)\xi_j''(t;\theta) \right) \\
    &= \sum_{j=1}^{P} \xi_j'(t;\theta) \left( \omega_j^2 \xi_j(t;\theta) + \xi_j''(t;\theta) \right) \\
    &= \sum_{j=1}^{P} \xi_j'(t;\theta) \Biggl( -R_j(t;\theta) - \left\langle g\left(u(t, \cdot)\right) - g\left(\widetilde{u}_{\theta}^{P,m,Q}(t, \cdot)\right), e_j \right\rangle_{L^2(\Omega)} - \sum_{\ell=1}^P M_{j\ell}\bigl(c_\ell(t)-c_\ell(t;\theta_c)\\
    &\qquad\qquad\qquad\qquad\qquad\qquad\qquad\qquad\qquad\qquad\qquad\qquad\qquad\qquad-\sum_{\ell=P+1}^\infty M_{j\ell}\,c_\ell(t)\Biggr).
\end{align*}
Applying Cauchy-Schwarz inequality, Minkowski inequality, and the bound \(\left(\displaystyle\sum_{j=1}^P |\xi_j'(t;\theta)|^2\right)^{1/2} \le \sqrt{2\Upsilon(t;\theta)}\), it follows that
\begin{align*}
    \dfrac{d}{dt} \Upsilon(t;\theta) &\le \left(\sum_{j=1}^P |\xi_j'(t;\theta)|^2\right)^{1/2} \Bigg(\sum_{j=1}^P \Big|R_j(t;\theta) + \left\langle g\left(u(t, \cdot)\right) - g\left(\widetilde{u}_{\theta}^{P,m,Q}(t, \cdot)\right), e_j \right\rangle_{L^2(\Omega)}\\
    &\qquad  \qquad \qquad \qquad \qquad \qquad \qquad +\sum_{\ell=1}^P M_{j\ell}\bigl(c_\ell(t)-c_\ell(t;\theta_c)\bigr) + \sum_{\ell=P+1}^\infty M_{j\ell}\,c_\ell(t)\Big|^2\Bigg)^{1/2} \\
    &\le \sqrt{2\Upsilon(t;\theta)} \left[ \left(\sum_{j=1}^P |R_j(t;\theta)|^2\right)^{1/2} + \left(\sum_{j=1}^P \left|\left\langle g\left(u(t, \cdot)\right) - g\left(\widetilde{u}_{\theta}^{P,m,Q}(t, \cdot)\right), e_j \right\rangle_{L^2(\Omega)}\right|^2\right)^{1/2} \right. \\
    &\qquad  \qquad \qquad \qquad \left.+\left(\sum_{j=1}^P\left|\sum_{\ell=1}^P M_{j\ell}\bigl(c_\ell(t)-c_\ell(t;\theta_c)\bigr)\right|^2\right)^{1/2}+\left(\sum_{j=1}^P\left|\sum_{\ell=P+1}^\infty M_{j\ell}\,c_\ell(t)\right|^2\right)^{1/2}\right] \\
    &\le \sqrt{2\Upsilon(t;\theta)} \left[ \left(\sum_{j=1}^P |R_j(t;\theta)|^2\right)^{1/2} + \left\| g\left(u(t, \cdot)\right) - g\left(\widetilde{u}_{\theta}^{P,m,Q}(t, \cdot)\right)\right\|_{L^2(\Omega)} \right.
    \\ &\qquad  \qquad \qquad \qquad \left.+\left(\sum_{j=1}^P\left|\sum_{\ell=1}^P M_{j\ell}\bigl(c_\ell(t)-c_\ell(t;\theta_c)\bigr)\right|^2\right)^{1/2}+\left(\sum_{j=1}^P\left|\sum_{\ell=P+1}^\infty M_{j\ell}\,c_\ell(t)\right|^2\right)^{1/2}\right].
\end{align*}
In addition, applying Bessel's inequality to the last two terms, we have
\begin{align*}
    \sum_{j=1}^P\left|\sum_{\ell=1}^P M_{j\ell}\bigl(c_\ell(t)-c_\ell(t;\theta_c)\bigr)\right|^2 &= \sum_{j=1}^P\left| \Biggl\langle\sum_{\ell=1}^P\mathbf{1}_\omega\bigl(c_\ell(t)-c_\ell(t;\theta_c)\bigr)e_{\ell}, e_j\Biggr\rangle\right|^2 \\ &= \sum_{j=1}^P\left| \left\langle\mathbf{1}_\omega\left(\pi_Pf(t,\cdot)-\widetilde{f}_{\theta_c}^{P,m,Q}(t,\cdot)\right), e_j\right\rangle\right|^2
    \\ 
    &\le\left\|\mathbf{1}_\omega\left(\pi_Pf(t,\cdot)-\widetilde{f}_{\theta_c}^{P,m,Q}(t,\cdot)\right)\right\|^2_{L^2(\Omega)} \\
    &\le\left\|\pi_Pf(t,\cdot)-\widetilde{f}_{\theta_c}^{P,m,Q}(t,\cdot)\right\|^2_{L^2(\Omega)} \\
    \sum_{j=1}^P\left|\sum_{\ell=P+1}^\infty M_{j\ell}\,c_\ell(t)\right|^2&= \sum_{j=1}^P\left| \Biggl\langle\sum_{\ell=P+1}^\infty\mathbf{1}_\omega c_\ell(t)e_{\ell}, e_j\Biggr\rangle\right|^2 \\ &= \sum_{j=1}^P\left| \left\langle\mathbf{1}_\omega\left(f(t,\cdot)-\pi_Pf(t,\cdot)\right), e_j\right\rangle\right|^2
    \\ 
    &\le\left\|\mathbf{1}_\omega\left(f(t,\cdot)-\pi_Pf(t,\cdot)\right)\right\|^2_{L^2(\Omega)} \\
    &\le\left\|f(t,\cdot)-\pi_Pf(t,\cdot)\right\|^2_{L^2(\Omega)}
\end{align*}
Using the Lipschitz continuity of \(g\), the equality \(\|e_j\|^2_{H_0^1(\Omega)} = \mu_j+\alpha = \omega_j^2\) (see~\eqref{H_0^1-norm-e_k}), and the bound \(\left(\displaystyle\sum_{j=1}^P \omega_j^2|\xi_j(t;\theta)|^2\right)^{1/2} \le \sqrt{2\Upsilon(t;\theta)}\), we find that
\begin{align*}
    \left\| g\left(u(t, \cdot)\right) - g\left(\widetilde{u}_{\theta}^{P,m,Q}(t, \cdot)\right)\right\|_{L^2(\Omega)}
    &\le L_g\left\|u(t, \cdot) - \widetilde{u}_{\theta}^{P,m,Q}(t, \cdot)\right\|_{H_0^1(\Omega)} \\
    &\le L_g\left(\left\|u(t, \cdot) - \pi_P u(t,\cdot)\right\|_{H_0^1(\Omega)}+\left\|\pi_P u(t,\cdot) - \widetilde{u}_{\theta}^{P,m,Q}(t, \cdot)\right\|_{H_0^1(\Omega)}\right) \\
    &\le L_g\left[\left\|\Phi(t,\cdot) - \Pi_P\Phi(t,\cdot)\right\|_{\mathcal{E}}+\left(\sum_{j=1}^P \left|a_j(t) - a_j(t;\theta)\right|^2 \|e_j\|^2_{H_0^1(\Omega)}\right)^{1/2}\right] \\
    &= L_g\left[E_{\mathrm{spec}}(P)+\left(\sum_{j=1}^P \omega_j^2\left|\xi_j(t;\theta)\right|^2\right)^{1/2}\right] \\
    &\le L_g\left(E_{\mathrm{spec}}(P)+\sqrt{2\Upsilon(t;\theta)}\right).
\end{align*}
Therefore, we obtain
\begin{align*}
    \dfrac{d}{dt}\Upsilon(t;\theta) \le \sqrt{2\Upsilon(t;\theta)} \left[ \left(\sum_{j=1}^P |R_j(t;\theta)|^2\right)^{1/2} + L_g\left(E_{\mathrm{spec}}(P)+\sqrt{2\Upsilon(t;\theta)}\right)+\left\|\pi_Pf(t,\cdot)-\widetilde{f}_{\theta_c}^{P,m,Q}(t,\cdot)\right\|_{L^2(\Omega)}\right.
    \\
    \qquad\qquad+\left\|f(t,\cdot)-\pi_Pf(t,\cdot)\right\|_{L^2(\Omega)}\Biggr],
\end{align*}
which implies
\begin{align*}
    \dfrac{d}{dt} \sqrt{\Upsilon(t;\theta)} \le L_g\sqrt{\Upsilon(t;\theta)} + \dfrac{1}{\sqrt{2}}\left[\left(\sum_{j=1}^P |R_j(t;\theta)|^2\right)^{1/2}+L_g\,E_{\mathrm{spec}}(P)+\left\|\pi_Pf(t,\cdot)-\widetilde{f}_{\theta_c}^{P,m,Q}(t,\cdot)\right\|_{L^2(\Omega)}\right.
    \\
    \qquad\qquad+\left\|f(t,\cdot)-\pi_Pf(t,\cdot)\right\|_{L^2(\Omega)}\Biggr].
\end{align*}
Applying Gr\"onwall's inequality yields
\begin{align*}
    \sqrt{\Upsilon(t;\theta)} &\le e^{L_g t} \sqrt{\Upsilon(0;\theta)} + \dfrac{1}{\sqrt{2}} \int_0^t e^{L_g(t-s)} \left[\left(\sum_{j=1}^P |R_j(s;\theta)|^2\right)^{1/2}+L_g\,E_{\mathrm{spec}}(P)\right.\\
    &\qquad\qquad\qquad\qquad\qquad+\left\|\pi_Pf(s,\cdot)-\widetilde{f}_{\theta_c}^{P,m,Q}(s,\cdot)\right\|_{L^2(\Omega)}+\left\|f(s,\cdot)-\pi_Pf(s,\cdot)\right\|_{L^2(\Omega)}\Biggr] \,ds \\
    &\le e^{L_g T} \sqrt{\Upsilon(0;\theta)} + \dfrac{e^{L_g T}}{\sqrt{2}} \int_0^T \left[\left(\sum_{j=1}^P |R_j(s;\theta)|^2\right)^{1/2}+L_g\,E_{\mathrm{spec}}(P)\right.\\
    &\qquad\qquad\qquad\qquad\qquad+\left\|\pi_Pf(s,\cdot)-\widetilde{f}_{\theta_c}^{P,m,Q}(s,\cdot)\right\|_{L^2(\Omega)}+\left\|f(s,\cdot)-\pi_Pf(s,\cdot)\right\|_{L^2(\Omega)}\Biggr] \,ds.
\end{align*}
Applying H\"older's inequality to the integral terms gives
\begin{align*}
    \int_0^T \left(\sum_{j=1}^P |R_j(s;\theta)|^2\right)^{1/2} \,ds &\le \left( \int_0^T \sum_{j=1}^P |R_j(s;\theta)|^2 \,ds \right)^{1/2} \left( \int_0^T 1 \,ds \right)^{1/2} \\
    &\le \sqrt{T} \left( \sum_{j=1}^P \|R_j(\cdot,\theta)\|^2_{L^2(0,T)} \right)^{1/2}, \\
    \int_0^T\left\|\pi_Pf(s,\cdot)-\widetilde{f}_{\theta_c}^{P,m,Q}(s,\cdot)\right\|_{L^2(\Omega)} \,ds &\le \left( \int_0^T \left\|\pi_Pf(s,\cdot)-\widetilde{f}_{\theta_c}^{P,m,Q}(s,\cdot)\right\|_{L^2(\Omega)}^2 \,ds \right)^{1/2} \left( \int_0^T 1 \,ds \right)^{1/2} \\
    &\le \sqrt{T} \left\|\pi_Pf-\widetilde{f}_{\theta_c}^{P,m,Q}\right\|_{L^2((0,T)\times\Omega)}, \\
    \int_0^T\left\|f(s,\cdot)-\pi_Pf(s,\cdot)\right\|_{L^2(\Omega)} \,ds
    &\le\left( \int_0^T \left\|f(s,\cdot)-\pi_Pf(s,\cdot)\right\|_{L^2(\Omega)}^2 \,ds \right)^{1/2} \left( \int_0^T 1 \,ds \right)^{1/2} \\
    &=\sqrt{T}\left\|f-\pi_P f\right\|_{L^2((0,T)\times\Omega)}.
\end{align*}
Moreover, since \(\sqrt{\Upsilon(t;\theta)} = \dfrac{1}{\sqrt{2}} \big\| \Pi_P\Phi(t,\cdot) - \widetilde\Phi_{\theta}^{P,m,Q}(t,\cdot) \big\|_{\mathcal E}\), we obtain
\begin{align}\label{theo16-est1}
    &\left\| \Pi_P\Phi(t,\cdot) - \widetilde\Phi_{\theta}^{P,m,Q}(t,\cdot)\right\|_{\mathcal E}\le e^{L_gT}\left\| \Pi_P\Phi_0 - \widetilde\Phi_{\theta}^{P,m,Q}(0,\cdot)\right\|_{\mathcal E} + e^{L_gT}\sqrt{T}\left(\sum_{j=1}^P{\left\|R_j(\cdot,\theta)\right\|^2_{L^2(0,T)}}\right)^{1/2}\notag\\&+e^{L_gT}L_gT\,E_{\mathrm{spec}}(P)+e^{L_gT}\sqrt{T}\left\|\pi_Pf-\widetilde{f}_{\theta_c}^{P,m,Q}\right\|_{L^2((0,T)\times\Omega)}+e^{L_gT}\sqrt{T}\left\|f-\pi_P f\right\|_{L^2((0,T)\times\Omega)}.
\end{align}
From \eqref{eq:E_quad}, for each \(\theta \in \theta\in\Theta_{P,m}\), we have 
\begin{equation*}
    \sum_{j=1}^{P}\|R_j(\cdot;\theta)\|_{L^2(0,T)}^2 \le \sum_{q=1}^{Q}w_q\sum_{j=1}^{P}|R_j(t_q;\theta)|^2 + E^2_{\mathrm{quad}}(Q).
\end{equation*}
Combining with \eqref{eq:discrete_loss_PQ}, it follows that
\begin{align}\label{theo16-est2}
    &\left\| \Pi_P\Phi_0 - \widetilde\Phi_{\theta}^{P,m,Q}(0,\cdot)\right\|^2_{\mathcal E} + T\sum_{j=1}^P{\left\|R_j(\cdot,\theta)\right\|^2_{L^2(0,T)}} \notag\\  
    &\le \left\| \Pi_P\Phi(0,\cdot) - \widetilde\Phi_{\theta}^{P,m,Q}(0,\cdot)\right\|^2_{\mathcal E} + T\sum_{q=1}^{Q}w_q\sum_{j=1}^{P}|R_j(t_q;\theta)|^2 + TE^2_{\mathrm{quad}}(Q). \notag\\
    &\le \max\left\{\dfrac{1}{\lambda_{\mathrm{IC}}},T\right\}\mathcal L_{P,Q}(\theta)+ TE^2_{\mathrm{quad}}(Q).
\end{align}
Equation \eqref{eq:E_opt} gives
\begin{equation*}
    \mathcal L_{P,Q}(\theta) = \inf_{\vartheta\in\Theta_{P,m}}\mathcal L_{P,Q}(\vartheta) + E_{\mathrm{opt}}(\theta)^2.
\end{equation*}
For each \(\vartheta\in\Theta_{P,m}\), utilizing \eqref{eq:discrete_loss_PQ} and \eqref{eq:E_quad}, we find that
\begin{align*}
\mathcal L_{P,Q}(\vartheta)
:={}&
\sum_{q=1}^{Q} w_q
\sum_{j=1}^{P}
\left|R_j(t_q;\vartheta)\right|^2
\\&+
\lambda_{\mathrm{IC}}
\left\|
\widetilde\Phi_{\vartheta}^{P,m,Q}(0,\cdot)-\Pi_P\Phi_0
\right\|_{\mathcal E}^2
+
\lambda_{\mathrm{TC}}
\left\|
\widetilde\Phi_{\vartheta}^{P,m,Q}(T,\cdot)-\Pi_P\Phi_T
\right\|_{\mathcal E}^2
\\&\le \sum_{j=1}^{P}\|R_j(\cdot;\vartheta)\|_{L^2(0,T)}^2 + E^2_{\mathrm{quad}}(Q)
\\&+
\lambda_{\mathrm{IC}}
\left\|
\widetilde\Phi_{\vartheta}^{P,m,Q}(0,\cdot)-\Pi_P\Phi_0
\right\|_{\mathcal E}^2
+
\lambda_{\mathrm{TC}}
\left\|
\widetilde\Phi_{\vartheta}^{P,m,Q}(T,\cdot)-\Pi_P\Phi_T
\right\|_{\mathcal E}^2 
\\&\le \max\left\{1,\lambda_{\mathrm{IC}},\lambda_{\mathrm{TC}}\right\}\Biggl(\sum_{j=1}^{P}\|R_j(\cdot;\vartheta)\|_{L^2(0,T)}^2 + 
\left\|
\widetilde\Phi_{\vartheta}^{P,m,Q}(0,\cdot)-\Pi_P\Phi_0
\right\|_{\mathcal E}^2 \\
&\qquad \qquad \qquad \qquad \qquad \qquad +
\left\|
\widetilde\Phi_{\vartheta}^{P,m,Q}(T,\cdot)-\Pi_P\Phi_T
\right\|_{\mathcal E}^2\Biggr)+E^2_{\mathrm{quad}}(Q).
\end{align*}
Taking the infimum over \(\vartheta\in\Theta_{P,m}\) and using \eqref{eq:E_net}, we arrive at
\begin{align*}
    \inf_{\vartheta\in\Theta_{P,m}}\mathcal L_{P,Q}(\vartheta) \le \max\left\{1,\lambda_{\mathrm{IC}},\lambda_{\mathrm{TC}}\right\}E^2_{\mathrm{net}}(m)+E^2_{\mathrm{quad}}(Q),
\end{align*}
which implies 
\begin{equation}\label{theo16-est3}
    \mathcal L_{P,Q}(\theta) = \inf_{\vartheta\in\Theta_{P,m}}\mathcal L_{P,Q}(\vartheta) + E_{\mathrm{opt}}(\theta)^2 \le \max\left\{1,\lambda_{\mathrm{IC}},\lambda_{\mathrm{TC}}\right\}E^2_{\mathrm{net}}(m)+E^2_{\mathrm{quad}}(Q) + E_{\mathrm{opt}}(\theta)^2
\end{equation}
Substituting \eqref{theo16-est3} into \eqref{theo16-est2}, we obtain
\begin{align*}
    &\left\| \Pi_P\Phi_0 - \widetilde\Phi_{\theta}^{P,m,Q}(0,\cdot)\right\|^2_{\mathcal E} + T\sum_{j=1}^P{\left\|R_j(\cdot,\theta)\right\|^2_{L^2(0,T)}} \\ 
    &\qquad \qquad \le C_1C_2E^2_{\mathrm{net}}(m)+(C_1+T)E^2_{\mathrm{quad}}(Q) + C_1E_{\mathrm{opt}}(\theta)^2.
\end{align*}
where \(C_1=\max\left\{\dfrac{1}{\lambda_{\mathrm{IC}}},T\right\}\) and \(C_2=\max\left\{1,\lambda_{\mathrm{IC}},\lambda_{\mathrm{TC}}\right\}\). This yields
\begin{align}\label{theo16-est4}
    &\left\| \Pi_P\Phi_0 - \widetilde\Phi_{\theta}^{P,m,Q}(0,\cdot)\right\|_{\mathcal E} + \sqrt{T}\left(\sum_{j=1}^P{\left\|R_j(\cdot,\theta)\right\|^2_{L^2(0,T)}}\right)^{1/2} \notag\\
    &\le \sqrt{2\left(C_1C_2E^2_{\mathrm{net}}(m)+(C_1+T)E^2_{\mathrm{quad}}(Q) + C_1E_{\mathrm{opt}}(\theta)^2\right)} \notag\\
    & \le \sqrt{2C_1C_2}E_{\mathrm{net}}(m) + \sqrt{2(C_1+T)}E_{\mathrm{quad}}(Q) + \sqrt{2C_1}E_{\mathrm{opt}}(\theta)
\end{align}
From \eqref{theo16-est1} and \eqref{theo16-est4}, it holds that
\begin{align*}
    &\left\| \Pi_P\Phi(t,\cdot) - \widetilde\Phi_{\theta}^{P,m,Q}(t,\cdot)\right\|_{\mathcal E}\le e^{L_gT}\left(\sqrt{2C_1C_2}E_{\mathrm{net}}(m) + \sqrt{2(C_1+T)}E_{\mathrm{quad}}(Q) + \sqrt{2C_1}E_{\mathrm{opt}}(\theta)\right)\\&+e^{L_gT}L_gT\,E_{\mathrm{spec}}(P)+e^{L_gT}\sqrt{T}\left\|\pi_Pf-\widetilde{f}_{\theta_c}^{P,m,Q}\right\|_{L^2((0,T)\times\Omega)}+e^{L_gT}\sqrt{T}\left\|f-\pi_P f\right\|_{L^2((0,T)\times\Omega)}.
\end{align*}
This proves the theorem.
\end{proof}       

\section{Spectral Obstructions for Standard Time Discretizations}\label{sec:obstruction}

The preceding sections formulate the control reconstruction problem directly in spectral variables. We now explain why this choice is natural for wave equations. The point of this section is not to claim that classical numerical methods for wave equations fail in general. Many structure-preserving, filtered, symplectic, spectral, and discrete-HUM methods are available and can be highly effective when designed appropriately. Rather, the purpose is to isolate a simple obstruction faced by standard unfiltered one-step time-discretization schemes when they are asked to approximate the full conservative wave group uniformly over its unbounded spectrum.

For the homogeneous linear wave equation, the first-order operator \(A\) is skew-adjoint on the energy space \(\mathcal E\). Consequently, the exact evolution \(e^{-tA}\) is a unitary group. In spectral variables, every mode has purely imaginary frequency and evolves by a phase factor of modulus one. Thus, the conservative structure of the wave equation is encoded mode by mode.

A standard one-step time discretization replaces the exact multiplier \(e^{-\Delta t\lambda}\) by a rational amplification factor \(r(\Delta t \lambda)\). For low and moderate frequencies, this may give an accurate approximation. However, because the wave spectrum is unbounded, the behavior of \(r(i\Delta t\omega)\) as \(|\omega|\to\infty\) becomes relevant for uniform-in-frequency accuracy and, in particular, for high-frequency controllability mechanisms. The following elementary computations show three possible distortions: spurious amplification, artificial dissipation, and high-frequency phase compression.

This motivates the neural-spectral formulation used in this paper: rather than training through a direct time-stepping discretization of the full wave equation, we work with the modal dynamics themselves and learn the temporal coefficients of the state and control.

\subsection{The Fundamental Obstruction: Purely Imaginary Spectrum}
We begin by identifying a fundamental obstruction that affects standard time-stepping schemes when they are applied to the controlled wave system \eqref{Wave4}. The difficulty stems from the spectral properties of the operator \(A\) established in Lemma~\ref{lem:spectrum_A}: all eigenvalues \(\lambda_k=i\omega_k\) are purely imaginary, with \(|\omega_k|\to\infty\) as \(k\to\infty\). Thus, the spectrum lies on the imaginary axis and has no dissipative component. The purely imaginary nature of the spectrum of \(A\) is a direct consequence of its skew-adjointness. By Stone's theorem (see, e.g., \cite[Theorem~3.24]{engel2000}), the skew-adjointness property of \(A\) is equivalent to \(e^{-tA}\) being a strongly continuous \emph{unitary} group on \(\mathcal E\), which satisfies
\begin{equation}\label{eq:unitary_group}
    \left\|e^{-tA}\right\| = 1, \qquad \text{for all } t \geq 0.
\end{equation}
The above property reflects the conservative nature of the wave equation: the energy \(\left\|\Phi(t,\cdot)\right\|^2_H = \left\|\Phi(0,\cdot)\right\|^2_H\) is an exact constant of motion for the unforced linear system, for all initial data. This identity holds \emph{mode by mode}: for each eigenfunction \(\phi_k\), 
\[
|\langle e^{-tA}\Phi(0,\cdot), \phi_k\rangle_{\mathcal E}| = |\langle\Phi(0,\cdot),\phi_k\rangle_{\mathcal E}|.
\]
Indeed, applying \eqref{def:semi-group} to \(\Psi=\Phi(0,\cdot)\), we obtain
\begin{equation*}
    e^{-tA}\Phi(0,\cdot) = \sum_{k=1}^{\infty}{e^{-\lambda_kt}\left\langle \Phi(0,\cdot),\phi_k \right\rangle}\phi_k,
\end{equation*}
which implies that for each \(k \in \mathbb{N}^*\),
\begin{equation*}
    |\langle e^{-tA}\Phi(0,\cdot), \phi_k\rangle_{\mathcal E}| = \left|e^{-\lambda_kt}\left\langle \Phi(0,\cdot),\phi_k \right\rangle_{\mathcal E}\right| = \left|e^{-\lambda_kt}\right|\left|\left\langle \Phi(0,\cdot),\phi_k \right\rangle_{\mathcal E}\right|=\left|\left\langle \Phi(0,\cdot),\phi_k \right\rangle_{\mathcal E}\right|.
\end{equation*}
The last equality follows from the fact that \(\left|e^{-\lambda_kt}\right| = \left|e^{-i\omega_kt}\right|=1\). Classical finite-difference time-stepping schemes approximate the action of the semi-group \(e^{-tA}\) over one time step \(\Delta{t} > 0\) by a rational function \(r(z)\) of \(z = \Delta{t} A\) (see, e.g., \cite{thomee2006galerkin}). Specifically, the numerical solution after \(n\) steps takes the form \(\widetilde{\Phi}^n = r(\Delta{t} A)^n \Phi(0,\cdot)\), and the amplification factor of mode \(\phi_k\) under one step is
\begin{equation*}
  r(\Delta{t}\lambda_k) = r(i\Delta{t}\omega_k) \in \mathbb{C}.
\end{equation*}
For the scheme to faithfully reproduce~\eqref{eq:unitary_group} across the \emph{entire} spectrum, one would need \(|r(i\Delta{t}\omega_k)| = 1\) for all \(k \in \mathbb{N}^*\). If instead \(|r(i\Delta{t}\omega_k)| > 1\), the amplitude of mode \(k\) grows as \(|r(i\Delta{t}\omega_k)|^n \to \infty\) as \(n \to \infty\), causing spurious exponential growth (instability). If \(|r(i\Delta{t}\omega_k)| < 1\), the amplitude decays as \(|r(i\Delta{t}\omega_k)|^n \to 0\) as \(n \to \infty\), introducing artificial dissipation that removes energy from that mode irreversibly. In either case, the scheme fails to replicate the conservative dynamics of the exact semi-group. A direct computation shows that this condition is satisfied only by the Crank--Nicolson scheme among the three classical choices:
\begin{itemize}\setlength{\itemsep}{0.5em}
  \item \textbf{Explicit Euler:} \(r(z) = 1 - z\), so
        \(|r(i\Delta{t}\omega_k)| = \sqrt{1 + \Delta{t}^2\omega_k^2} > 1\);
  \item \textbf{Implicit Euler:} \(r(z) = (1+z)^{-1}\), so
        \(|r(i\Delta{t}\omega_k)| = (1 + \Delta{t}^2\omega_k^2)^{-1/2} < 1\);
  \item \textbf{Crank--Nicolson:} \(r(z) = (1-z/2)(1+z/2)^{-1}\), so
        \(|r(i\Delta{t}\omega_k)| = 1\).
\end{itemize}
\vspace{0.5em}
The explicit and implicit Euler schemes fail this conservative test in opposite ways: the former introduces spurious amplification, whereas the latter introduces artificial dissipation. Crank--Nicolson preserves the modulus of every spectral coefficient, but it replaces the exact phase \(\Delta t\,\omega_k\) by 
\[ 
2\arctan\left(\frac{\Delta t\,\omega_k}{2}\right). 
\] 
Thus, high frequencies experience a large phase distortion, since the numerical phase saturates while the exact phase grows linearly in \(|\omega_k|\). Therefore, it still introduces a frequency-dependent phase error, which may affect high-frequency controllability.

\subsection{Spectral Obstructions in Standard Time Discretization Schemes}
We consider the unforced linear homogeneous case \(F=0\), \(G=0\) in system \eqref{Wave4}. Any viable numerical method must first correctly capture the continuous evolution of the linear system \(\Phi' + A\Phi = 0\) over its unbounded spectrum before it can accurately resolve the controlled or nonlinear dynamics. 
Recall that
    \begin{equation*}
        \Phi(t,\cdot)=e^{-tA}\Phi(0,\cdot),
    \end{equation*}
    where the semi-group \(e^{-tA}\) is defined by
    \begin{equation*}
        e^{-tA}\Psi=\sum_{j=1}^{\infty}{e^{-\lambda_jt}\left\langle \Psi, \phi_j \right\rangle}\phi_j.
    \end{equation*}
    Since all the eigenvalues \(\lambda_j\) of \(A\) are purely imaginary, it follows that \(\left|e^{-t\lambda_j}\right|=1\) for all \(j \in \mathbb{N}^*\), and thus
    \begin{equation*}
        \left\|e^{-tA}\right\|=\sup\limits_{j\in\mathbb{N}^*}\left|e^{-t\lambda_j}\right|=1.
    \end{equation*}
    Let \(0:=t_0<t_1<t_2<\ldots<t_N:=T\) be a uniform partition of the interval \([0,T]\) with time step \(\Delta{t}=\dfrac{T}{N}\). We approximate \(\Phi(t_n)\) by a numerical solution \(\widetilde{\Phi}^n\) obtained by a classical finite difference time-stepping scheme.

\subsubsection{Unconditional instability of explicit Euler}
Applying an explicit Euler scheme for time discretization to \(\Phi'+A\Phi=0\), we obtain
\begin{equation*}
    \dfrac{\widetilde{\Phi}^n-\widetilde{\Phi}^{n-1}}{\Delta{t}}+A\widetilde{\Phi}^{n-1}=0,\quad\text{with }\widetilde{\Phi}^0=\Phi(0,\cdot),
\end{equation*}
or equivalently, 
\begin{equation*}
    \widetilde{\Phi}^n=(I-\Delta{t}A)\widetilde{\Phi}^{n-1}.
\end{equation*}
Consequently, 
\begin{equation*}
    \widetilde{\Phi}^n=(I-\Delta{t}A)^n\widetilde{\Phi}^0=(I-\Delta{t}A)^n\Phi(0,\cdot).
\end{equation*}
\begin{proposition}[Explicit Euler produces high-frequency amplification]
For each \(n \in \mathbb{N}^*\) and \(\Delta{t} > 0\), the explicit Euler approximation
\(\widetilde{\Phi}^n = (I - \Delta{t} A)^n \Phi(0,\cdot)\) satisfies
\begin{equation*}
  \left\|\widetilde{\Phi}^n\right\|_{\mathcal E}^2
  = \sum\limits_{k=1}^{\infty}{\left(1+\omega_k^2\Delta{t}^2\right)^n    \left|\left\langle\Phi(0,\cdot),\phi_k\right\rangle_{\mathcal E}\right|^2}.
\end{equation*}
In particular, for any \(\Phi(0,\cdot) \neq 0\), the numerical energy grows without bound, i.e.,  \(\left\|\widetilde{\Phi}^n\right\|_{\mathcal E} \to \infty\) as \(n \to \infty\). Moreover, for each \(n\in\mathbb N^*\) and \(\Delta t>0\), the formal explicit Euler operator \((I-\Delta t A)^n\) is unbounded on \(\mathcal E\).
\end{proposition}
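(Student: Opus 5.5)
The plan is to work entirely in the orthonormal eigenbasis \(\{\phi_k\}_{k\in\mathbb N^*}\) of Lemma~\ref{lem:spectrum_A} (re-indexed as in Lemma~\ref{lem:reindexing}), using the functional calculus \eqref{def-h}. Since \((I-\Delta t A)^n\) is a polynomial in \(A\), it acts diagonally on the eigenbasis. Indeed, \(A\phi_k=\lambda_k\phi_k\) with \(\lambda_k=i\omega_k\), so
\[
(I-\Delta tA)^n\phi_k=(1-i\Delta t\,\omega_k)^n\phi_k .
\]
I would first interpret \(\widetilde{\Phi}^n\) as the series
\[
\widetilde{\Phi}^n=\sum_k(1-i\Delta t\omega_k)^n\langle\Phi(0,\cdot),\phi_k\rangle_{\mathcal E}\phi_k .
\]
This agrees with the literal operator whenever \(\Phi(0,\cdot)\in D(A^n)\). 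In general it is the definition, with the norm identity read in \([0,\infty]\). Parseval's identity and \(|1-i\Delta t\omega_k|^2=1+\omega_k^2\Delta t^2\) then give the stated formula for \(\|\widetilde\Phi^n\|_{\mathcal E}^2\) immediately.

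For the blow-up as \(n\to\infty\), I would pick any index \(k_0\) with \(\langle\Phi(0,\cdot),\phi_{k_0}\rangle\neq 0\). Such an index exists because the basis is complete and \(\Phi(0,\cdot)\neq0\). Bounding the sum from below by its \(k_0\) term gives
\[
\|\widetilde\Phi^n\|^2\ge(1+\omega_{k_0}^2\Delta t^2)^n|\langle\Phi(0,\cdot),\phi_{k_0}\rangle|^2 .
\]
Since \(\omega_{k_0}^2=\mu_{k_0}+\alpha\ge N\pi^2>0\), the factor \(1+\omega_{k_0}^2\Delta t^2\) is strictly greater than \(1\), so the right-hand side diverges.

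For unboundedness, I would use \(\phi_k\) itself as a test vector. It has unit norm and \(\|(I-\Delta tA)^n\phi_k\|=(1+\omega_k^2\Delta t^2)^{n/2}\). Because \(\omega_k\to\infty\) (the Dirichlet eigenvalues \(\mu_k\) are unbounded), this gives \(\sup_k\) equal to \(\infty\). Equivalently, by \eqref{def-norm-h}, the norm equals \(\sup_k|1-i\Delta t\omega_k|^n=\infty\).

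The only delicate point is domain and interpretation. The operator \((I-\Delta tA)^n\) is only defined on \(D(A^n)\), so the identity has to be stated with the understanding that the series may diverge. I would state explicitly that for \(\Phi(0,\cdot)\in D(A^n)\) the identity is a genuine equality of finite quantities. For general \(\Phi(0,\cdot)\in\mathcal E\), the formal operator's output has infinite norm exactly when the sum diverges. The remaining steps are routine.
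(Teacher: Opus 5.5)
Your proposal is correct and follows essentially the same route as the paper: diagonalize $(I-\Delta t A)^n$ in the eigenbasis via the functional calculus \eqref{def-h}, apply Parseval with $|1-i\Delta t\,\omega_k|^2=1+\omega_k^2\Delta t^2$, bound the sum below by a single mode $k_0$ with nonzero coefficient, and obtain unboundedness from \eqref{def-norm-h} (equivalently, from unit eigenvectors) together with $|\omega_k|\to\infty$. Your explicit treatment of the domain $D(A^n)$, reading the identity in $[0,\infty]$ for general data, is a useful clarification that the paper leaves implicit by calling the operator ``formal.''
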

\begin{proof}
For each \(n \in \mathbb{N}^*\) and \(\Delta{t}>0\), applying \eqref{def-h} to \(h:=\left(I-\Delta{t}A\right)^n\) and \(\Psi:=\Phi(0,\cdot)\), we have
\begin{equation*}
    \widetilde{\Phi}^n = (I - \Delta{t} A)^n \Phi(0,\cdot) = \sum\limits_{k=1}^{\infty}{\left(1-\lambda_k\Delta{t}\right)^n\left\langle \Phi(0,\cdot), \phi_k \right\rangle_{\mathcal E} \phi_k}=\sum\limits_{k=1}^{\infty}{\left(1-i\omega_k\Delta{t}\right)^n\left\langle \Phi(0,\cdot), \phi_k \right\rangle_{\mathcal E} \phi_k}.
\end{equation*}
By Parseval's identity, it implies that 
\begin{equation*}
  \left\|\widetilde{\Phi}^n\right\|_{\mathcal E}^2
  = \sum\limits_{k=1}^{\infty}{|1 - i\omega_k\Delta{t}|^{2n}
    \left|\left\langle\Phi(0,\cdot),\phi_k\right\rangle_{\mathcal E}\right|^2}
  = \sum\limits_{k=1}^{\infty}{\left(1+\omega_k^2\Delta{t}^2\right)^n    \left|\left\langle\Phi(0,\cdot),\phi_k\right\rangle_{\mathcal E}\right|^2}.
\end{equation*}
Since \(1 + \omega_k^2\Delta{t}^2 > 1\) for every \(k \in \mathbb{N}^*\), each term in the above series grows exponentially in \(n\). If \(\Phi(0,\cdot)\ne 0\), one can find \(k_0 \in \mathbb{N}^*\) such that \(\left\langle\Phi(0,\cdot),\phi_{k_0}\right\rangle_{\mathcal E} \neq 0\), which yields
\begin{equation*}
  \left\|\widetilde{\Phi}^n\right\|_{\mathcal E}
  \geq \left(1 + \omega_{k_0}^2\Delta{t}^2\right)^{n/2}\left|\left\langle\Phi(0,\cdot),\phi_{k_0}\right\rangle_{\mathcal E}\right|
  \to \infty \quad \text{as } n \to \infty.
\end{equation*}
Furthermore, for every \(n \in \mathbb{N}^*\) and \(\Delta{t}>0\), applying \eqref{def-norm-h} to \(h:=\left(I-\Delta{t}A\right)^n\) and \(\Psi:=\Phi(0,\cdot)\), it holds that 
\begin{equation}\label{eq:ex-euler_norm_infty}
    \left\|\left(I-\Delta{t}A\right)^n\right\|=\sup\limits_{k\in\mathbb{N}^*}{\left|1-\lambda_k\Delta{t}\right|^n}=\sup\limits_{k\in\mathbb{N}^*}{\left|1-i\omega_k\Delta{t}\right|^n}=\sup\limits_{k\in\mathbb{N}^*}{\left(1+\omega_k^2\Delta{t}^2\right)^{n/2}}=+\infty.
\end{equation}
The last equality holds by using the fact that \(|\omega_k| \to \infty\) as \(k \to \infty\).
\end{proof}
\begin{remark}
    Identity~\eqref{eq:ex-euler_norm_infty} reveals the failure of explicit Euler: the operator \((I - \Delta t A)^n\) is \emph{unbounded} on \(\mathcal E\) for every \(n \in \mathbb{N}^*\) and every \(\Delta t > 0\). This stands in sharp contrast to the exact semi-group \(e^{-tA}\), which by Stone's theorem is a strongly continuous unitary group satisfying \(\|e^{-tA}\| = 1\) for all \(t \geq 0\) as mentioned in \eqref{eq:unitary_group}.
\end{remark}
\subsubsection{Artificial dissipation of implicit Euler}
Applying an implicit Euler scheme for time discretization to \(\Phi'+A\Phi=0\), we obtain
\begin{equation*}
    \dfrac{\widetilde{\Phi}^n-\widetilde{\Phi}^{n-1}}{\Delta{t}}+A\widetilde{\Phi}^{n}=0,\quad\text{with }\widetilde{\Phi}^0=\Phi(0,\cdot),
\end{equation*}
or equivalently, 
\begin{equation*}
    \widetilde{\Phi}^n=(I+\Delta{t}A)^{-1}\widetilde{\Phi}^{n-1}.
\end{equation*}
Consequently, 
\begin{equation*}
    \widetilde{\Phi}^n=(I+\Delta{t}A)^{-n}\widetilde{\Phi}^0=(I+\Delta{t}A)^{-n}\Phi(0,\cdot).
\end{equation*}

\begin{proposition}[Artificial dissipation of implicit Euler]
		\label{prop:implicit_euler}
		For each \(n\in\mathbb N^*\) and \(\Delta t>0\), the implicit Euler
		approximation
		\[
		\widetilde\Phi^n=(I+\Delta t A)^{-n}\Phi(0,\cdot)
		\]
		satisfies
		\[
		\left\|\widetilde\Phi^n\right\|_{\mathcal E}^2
		=
		\sum_{k=1}^{\infty}
		\frac{|\langle\Phi(0,\cdot),\phi_k\rangle_{\mathcal E}|^2}
		{(1+\omega_k^2\Delta t^2)^n}.
		\]
		Moreover,
		\[
		\|(I+\Delta t A)^{-n}\|_{\mathcal L(\mathcal E)}
		=
		\left(1+\omega_{\min}^2\Delta t^2\right)^{-n/2}
		<1,
		\]
		where
		\[
		\omega_{\min}:=\inf_{k\in\mathbb N^*}|\omega_k|
		=
		\sqrt{N\pi^2+\alpha}>0,
		\]
		the infimum being attained at \(k = (1,1,\ldots,1) \in (\mathbb{N}^*)^N\). Consequently:
		\begin{enumerate}
			\item[(i)] if \(\Phi(0,\cdot)\ne0\), then
			\[
			\left\|\widetilde\Phi^n\right\|_{\mathcal E}<\left\|\Phi(0,\cdot)\right\|_{\mathcal E};
			\]
			\item[(ii)] for every \(k\in\mathbb N^*\),
			\[
			\left|\langle\widetilde\Phi^n,\phi_k\rangle_{\mathcal E}\right|
			=
			\dfrac{|\langle\Phi(0,\cdot),\phi_k\rangle_{\mathcal E}|}
			{\left(1+\omega_k^2\Delta t^2\right)^{n/2}}
			\le
			\dfrac{|\langle\Phi(0,\cdot),\phi_k\rangle_{\mathcal E}|}
			{(|\omega_k|\Delta t)^n}.
			\]
			In particular, for modes with \(|\omega_k|\Delta t\gg1\), the amplitude is
			suppressed by a factor of order \((|\omega_k|\Delta t)^{-n}\).
		\end{enumerate}
	\end{proposition}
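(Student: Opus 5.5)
The plan is to mirror the explicit Euler argument, replacing the amplification factor $1-\Delta t\lambda_k$ by $(1+\Delta t\lambda_k)^{-1}$ and using the functional calculus \eqref{def-h}--\eqref{def-norm-h}. First I will check that $I+\Delta tA$ is invertible on $\mathcal E$. Since $\lambda_k=i\omega_k$ is purely imaginary, $1+\Delta t\lambda_k\neq0$ and $|1+\Delta t\lambda_k|^{-1}\le 1$. Hence $h(\lambda)=(1+\Delta t\lambda)^{-n}$ is a bounded function on $\sigma(A)$, and $h(A)$, defined through \eqref{def-h}, is a bounded operator. It coincides with $(I+\Delta tA)^{-n}$: indeed $h(A)$ maps into $D(A^n)$ and inverts $(I+\Delta tA)^n$ mode by mode, and the inverse is unique because $A$ is skew-adjoint (equivalently, $A$ is $m$-dissipative by \cite[Proposition~2.6.9]{cazenave1998semi}). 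This gives
\[
\widetilde\Phi^n=\sum_{k}(1+i\omega_k\Delta t)^{-n}\langle\Phi(0,\cdot),\phi_k\rangle_{\mathcal E}\,\phi_k .
\]

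Next I apply Parseval's identity. Using $|1+i\omega_k\Delta t|^2=1+\omega_k^2\Delta t^2$, this yields the stated series for $\|\widetilde\Phi^n\|_{\mathcal E}^2$. Taking the inner product with $\phi_k$ gives the modal identity in (ii). The bound $(1+\omega_k^2\Delta t^2)^{n/2}\ge(|\omega_k|\Delta t)^n$ is immediate, since $1+x^2\ge x^2$.

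For the operator norm I will use \eqref{def-norm-h}:
\[
\|(I+\Delta tA)^{-n}\|=\sup_k(1+\omega_k^2\Delta t^2)^{-n/2}.
\]
This supremum is attained where $|\omega_k|$ is smallest. By Lemma~\ref{lem:spectrum_A}, $|\omega_k|^2=\sum_j k_j^2\pi^2+\alpha$ with each $k_j\ge1$, so the minimum is $N\pi^2+\alpha$, attained at $k=(1,\dots,1)$ for either sign. Because the infimum is attained, the supremum equals $(1+\omega_{\min}^2\Delta t^2)^{-n/2}$, and this is strictly less than $1$ since $\omega_{\min}\Delta t>0$.

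Finally, for (i) I compare the two series term by term. Every coefficient satisfies $(1+\omega_k^2\Delta t^2)^{-n}<1$. If $\Phi(0,\cdot)\neq0$, some coefficient $\langle\Phi(0,\cdot),\phi_{k_0}\rangle_{\mathcal E}$ is nonzero, so the inequality is strict for that term. Summing then gives $\|\widetilde\Phi^n\|_{\mathcal E}^2<\|\Phi(0,\cdot)\|_{\mathcal E}^2$; a cruder alternative is $\|\widetilde\Phi^n\|\le(1+\omega_{\min}^2\Delta t^2)^{-n/2}\|\Phi(0,\cdot)\|<\|\Phi(0,\cdot)\|$.

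The only delicate point is identifying the functional-calculus operator $h(A)$ with the genuine inverse $(I+\Delta tA)^{-n}$. I would handle it by checking $(1+\Delta t\lambda_k)\,h$-type identities on the basis $\{\phi_k\}$ and then extending by boundedness and density; everything else is routine.
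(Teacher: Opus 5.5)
Your proposal is correct and follows essentially the same route as the paper. Both arguments expand $\widetilde\Phi^n$ via the functional calculus \eqref{def-h}, apply Parseval's identity, use the operator-norm formula \eqref{def-norm-h} with the supremum attained at the lowest mode $k=(1,\dots,1)$, and compare term by term for (i) and (ii). Your extra step identifying $h(A)$ with the genuine inverse $(I+\Delta tA)^{-n}$, which the paper takes for granted, also goes through cleanly: both are bounded operators that agree on the eigenbasis $\{\phi_k\}$.
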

\begin{proof}
    For each \(n \in \mathbb{N}^*\) and \(\Delta{t}>0\), applying \eqref{def-h} to \(h:=\left(I+\Delta{t}A\right)^{-n}\) and \(\Psi:=\Phi(0,\cdot)\), we have
\begin{align}\label{eq:implicit_expansion}
    \widetilde{\Phi}^n = (I+\Delta t A)^{-n}\widetilde{\Phi}^0 = (I + \Delta{t} A)^{-n} \Phi(0,\cdot) &= \sum\limits_{k=1}^{\infty}{\left(1+\lambda_k\Delta{t}\right)^{-n}\left\langle \Phi(0,\cdot), \phi_k \right\rangle_{\mathcal E} \phi_k} \notag \\ &=\sum\limits_{k=1}^{\infty}{\left(1+i\omega_k\Delta{t}\right)^{-n}\left\langle \Phi(0,\cdot), \phi_k \right\rangle_{\mathcal E} \phi_k}.
\end{align}
By Parseval's identity, it follows that 
\begin{equation}\label{eq:implicit_energy}
  \left\|\widetilde{\Phi}^n\right\|_{\mathcal E}^2
  = \sum\limits_{k=1}^{\infty}{|1 + i\omega_k\Delta{t}|^{-2n}
    \left|\left\langle\Phi(0,\cdot),\phi_k\right\rangle_{\mathcal E}\right|^2}
  = \sum\limits_{k=1}^{\infty}{\dfrac{    \left|\left\langle\Phi(0,\cdot),\phi_k\right\rangle_{\mathcal E}\right|^2}{\left(1+\omega_k^2\Delta{t}^2\right)^{n}}}.
\end{equation}
Furthermore, applying \eqref{def-norm-h} to \(h:=\left(I+\Delta{t}A\right)^{-n}\) and \(\Psi:=\Phi(0,\cdot)\), it holds that 
\begin{align*}
    \left\|\left(I+\Delta{t}A\right)^{-n}\right\|=\sup\limits_{k\in\mathbb{N}^*}{\left|1+\lambda_k\Delta{t}\right|^{-n}}=\sup\limits_{k\in\mathbb{N}^*}{\left|1+i\omega_k\Delta{t}\right|^{-n}}=\sup\limits_{k\in\mathbb{N}^*}{\left(1+\omega_k^2\Delta{t}^2\right)^{-n/2}}<1.
\end{align*}
Since
	\[
	\omega_{\min}:=\inf_{k\in\mathbb N^*}|\omega_k|
	=
	\sqrt{N\pi^2+\alpha}>0,
	\]
	we have
	\[
	\|(I+\Delta tA)^{-n}\|
	=
	\sup_{k\in\mathbb N^*}
	\left(1+\omega_k^2\Delta t^2\right)^{-n/2}
	=
	\left(1+\omega_{\min}^2\Delta t^2\right)^{-n/2}
	<1.
	\]
\noindent\textit{Proof of (i).} For each \(k \in \mathbb{N}^*\), since \(1 + \omega_k^2\Delta{t}^2 > 1\), every term in the right-hand side of~\eqref{eq:implicit_energy} satisfies \(\dfrac{\left|\left\langle\Phi(0,\cdot),\phi_k\right\rangle_{\mathcal E}\right|^2}{\left(1 + \omega_k^2\Delta{t}^2\right)^n} \le \left|\left\langle\Phi(0,\cdot),\phi_k\right\rangle_{\mathcal E}\right|^2\) with equality if and only if \(\left\langle\Phi(0,\cdot),\phi_k\right\rangle_{\mathcal E}=0\). If \(\Phi(0,\cdot) \ne 0\), there exists at least one index \(k_0 \in \mathbb{N}^*\) such that \(\left\langle\Phi(0,\cdot),\phi_{k_0}\right\rangle_{\mathcal E}\ne0\). For this index, the inequality is strict
\begin{equation*}
    \dfrac{\left|\left\langle\Phi(0,\cdot),\phi_{k_0}\right\rangle_{\mathcal E}\right|^2}{\left(1 + \omega_{k_0}^2\Delta{t}^2\right)^n} < \left|\left\langle\Phi(0,\cdot),\phi_{k_0}\right\rangle_{\mathcal E}\right|^2.
\end{equation*}
Summing over \(k\) and invoking Parseval's identity yields
\begin{equation*}
    \left\|\widetilde{\Phi}^n\right\|_{\mathcal E}^2 = \sum\limits_{k=1}^{\infty}{\dfrac{    \left|\left\langle\Phi(0,\cdot),\phi_k\right\rangle_{\mathcal E}\right|^2}{\left(1+\omega_k^2\Delta{t}^2\right)^{n}}} < \sum\limits_{k=1}^{\infty}{    \left|\left\langle\Phi(0,\cdot),\phi_k\right\rangle_{\mathcal E}\right|^2} = \|\Phi(0,\cdot)\|_{\mathcal E}^2.
\end{equation*}
\\ \noindent\textit{Proof of (ii).} From \eqref{eq:implicit_expansion}, for every \(k \in \mathbb{N}^*\), we have
\begin{equation*}
      \left|\left\langle\widetilde{\Phi}^n,\phi_k\right\rangle_{\mathcal E}\right| = \dfrac{\left|\left\langle\Phi(0,\cdot),\phi_k\right\rangle_{\mathcal E}\right|}{\left|1+i\omega_k\Delta{t}\right|^n}
      = \dfrac{\left|\left\langle\Phi(0,\cdot),\phi_k\right\rangle_{\mathcal E}\right|}{\left(1 + \omega_k^2\Delta{t}^2\right)^{n/2}}
      \leq \dfrac{\left|\left\langle\Phi(0,\cdot),\phi_k\right\rangle_{\mathcal E}\right|}{(|\omega_k|\Delta{t})^n}.
    \end{equation*}
For high-frequency modes satisfying \(|\omega_k|\Delta t\gg1\), this estimate shows that the amplitude is suppressed by a factor of order \((|\omega_k|\Delta t)^{-n}\).
\end{proof}
\begin{remark}
    Proposition~\ref{prop:implicit_euler} establishes a quantitatively explicit energy deficit: energy is absorbed at every time step across the entire spectrum. The exact semi-group, by contrast, satisfies \(\left\|e^{-tA}\Phi(0,\cdot)\right\| = \|\Phi(0,\cdot)\|\) for all \(t \ge 0\). Thus, the implicit Euler scheme models intrinsically dissipative dynamics, which differ qualitatively from the conservative wave equation. Since exact controllability for wave equations is fundamentally grounded in the unitarity of the underlying semi-group, any scheme that breaks unitarity cannot be expected to reproduce uniform discrete controllability under mesh refinement.
\end{remark}

\subsubsection{Lack of uniform convergence for Crank--Nicolson}
Applying a Crank-Nicolson scheme for time discretization to \(\Phi'+A\Phi=0\), we obtain
\begin{equation*}
    \dfrac{\widetilde{\Phi}^n-\widetilde{\Phi}^{n-1}}{\Delta{t}}+A\dfrac{\widetilde{\Phi}^{n}+\widetilde{\Phi}^{n-1}}{2}=0, \quad\text{with }\widetilde{\Phi}^0=\Phi(0,\cdot),
\end{equation*}
or equivalently, 
\begin{equation*}
    \widetilde{\Phi}^n=\left(I+\dfrac{\Delta{t}}{2}A\right)^{-1}\left(I-\dfrac{\Delta{t}}{2}A\right)\widetilde{\Phi}^{n-1}.
\end{equation*}
Consequently, 
\begin{equation}\label{CN-discrete}
    \widetilde{\Phi}^n=\left(I+\dfrac{\Delta{t}}{2}A\right)^{-n}\left(I-\dfrac{\Delta{t}}{2}A\right)^n\widetilde{\Phi}^0=\left(I+\dfrac{\Delta{t}}{2}A\right)^{-n}\left(I-\dfrac{\Delta{t}}{2}A\right)^n\Phi(0,\cdot).
\end{equation}
For each \(k \in \mathbb{N}^*\) and \(\Delta{t}>0\), applying \eqref{def-h} to \(h:=\left(I + \dfrac{\Delta{t}}{2}A\right)^{-1}
\left(I - \dfrac{\Delta{t}}{2}A\right)^1\) and \(\Psi:=\phi_k\), we have
\begin{equation*}
    \left(I + \dfrac{\Delta{t}}{2}A\right)^{-1}
    \left(I - \dfrac{\Delta{t}}{2}A\right)\phi_k = \sum\limits_{j=1}^{\infty}{\dfrac{1 - \dfrac{\lambda_j\Delta{t}}{2}}{1 + \dfrac{\lambda_j\Delta{t}}{2}}\left\langle \phi_k, \phi_j\right \rangle_{\mathcal E} \phi_j}= \dfrac{1 - \dfrac{\lambda_k\Delta{t}}{2}}{1 + \dfrac{\lambda_k\Delta{t}}{2}}\phi_k = \dfrac{1 - \dfrac{i\omega_k\Delta{t}}{2}}{1 + \dfrac{i\omega_k\Delta{t}}{2}}\phi_k
\end{equation*}
Thus, the amplification factor of mode \(\phi_k\) under one step of the scheme is the following \emph{Cayley transform}
\begin{equation*}
  \mu_k
  := r_{\mathrm{CN}}(i\omega_k\Delta{t})
  := \dfrac{1 - \dfrac{i\omega_k\Delta{t}}{2}}{1 + \dfrac{i\omega_k\Delta{t}}{2}}.
\end{equation*}
Since \(\omega_k \in \mathbb{R}\), we have \(1-\dfrac{i\omega_k\Delta{t}}{2}\) and \(1+\dfrac{i\omega_k\Delta{t}}{2}\) are complex conjugates, which leads to \(|\mu_k| = 1\). By~\eqref{def-norm-h}, it follows that
\begin{equation*}
  \left\|r_{\mathrm{CN}}(\Delta{t} A)^n\right\|
  = \sup_{k \in \mathbb{N}^*}|\mu_k|^n = 1,
  \qquad \text{for all } n \geq 1,
\end{equation*}
so the Crank--Nicolson operator is an isometry on \(\mathcal E\), which is compatible with \eqref{eq:unitary_group}. Furthermore,
\begin{equation*}
    \arg(\mu_k)=\arg\left(1-\dfrac{i\omega_k\Delta{t}}{2}\right)-\arg\left(1+\dfrac{i\omega_k\Delta{t}}{2}\right)=-2\arctan\left(\dfrac{\omega_k\Delta{t}}{2}\right).
\end{equation*} 
We write \(\mu_k = e^{-i\omega_k^h\Delta{t}}\), where
\begin{equation}\label{eq:num_freq}
  \omega_k^h
  := -\frac{1}{\Delta{t}}\arg(\mu_k)
  = \frac{2}{\Delta{t}}\arctan\!\left(\dfrac{\omega_k\Delta{t}}{2}\right).
\end{equation}
Thus, Crank--Nicolson preserves amplitudes exactly, but replaces the exact frequency \(\omega_k\) by the modified frequency \(\omega_k^h\). Recall that \(A\) is \(m\)-dissipative with dense domains. By \cite[Lemma~4.1.1]{cazenave1998semi}, we obtain the formula for the exact solution \(\Phi\) of the linear system \(\Phi' + A\Phi = 0\) as follows
    \begin{equation*}
        \Phi(t,\cdot)=e^{-tA}\Phi(0,\cdot),
    \end{equation*}
    where the semi-group \(e^{-tA}\) is defined by
    \begin{equation*}
        e^{-tA}\Psi=\sum_{j=1}^{\infty}{e^{-\lambda_jt}\left\langle \Psi, \phi_j \right\rangle}\phi_j.
    \end{equation*}
Consequently, it follows that 
\begin{equation*}
    \Phi(T,\cdot) = \Phi(N\Delta{t}) = e^{-TA}\Phi(0,\cdot),
\end{equation*}
and from \eqref{CN-discrete},
\begin{equation*}
    \widetilde{\Phi}^N = \left(I+\dfrac{\Delta{t}}{2}A\right)^{-N}\left(I-\dfrac{\Delta{t}}{2}A\right)^N\Phi(0,\cdot).
\end{equation*}
Despite being an isometry, Crank--Nicolson fails for a more subtle reason:
\begin{proposition}[Crank--Nicolson preserves amplitudes but compresses high frequencies]
    For each \(\Delta{t}>0\) and \(k \in \mathbb{N}^*\), let \(\omega_k^h\) be defined by~\eqref{eq:num_freq}. The following properties hold.
    \begin{enumerate}
        \item[(i)] Spectral pile-up: All numerical frequencies are confined to the bounded interval, i.e., for each \(\Delta{t}>0\),
        \begin{equation*}
        \omega_k^h \in \left(-\dfrac{\pi}{\Delta{t}},\dfrac{\pi}{\Delta{t}}\right), \quad \text{for all } k \in \mathbb{N}^*,
        \end{equation*}
        while \(|\omega_k| \to \infty\) as \(k \to \infty\).
        Thus, infinitely many high-frequency continuous modes are mapped into a bounded numerical frequency band, producing severe high-frequency phase compression. 
        \item[(ii)] Unbounded phase error: The phase error diverges to infinity, i.e.,
        \begin{equation*}
        \left|\omega_k - \omega_k^h\right| \to \infty \quad \text{as } k \to \infty.
        \end{equation*}
        \item[(iii)] Lack of operator-norm convergence: The difference between the discrete propagator and the exact semi-group does not converge to zero in the operator norm as \(\Delta t \to 0\), i.e., 
        \begin{equation*}
        \left\|
        \left(I + \frac{\Delta t}{2}A\right)^{-\frac{T}{\Delta{t}}}
        \!\!\left(I - \frac{\Delta t}{2}A\right)^{\frac{T}{\Delta{t}}}- e^{-TA}\right\|\not\to 0 \qquad \text{as } \Delta t \to 0.
        \end{equation*}
    \end{enumerate}
\end{proposition}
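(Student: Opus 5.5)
Everything reduces to scalar facts about the numerical frequency \(\omega_k^h=\frac{2}{\Delta t}\arctan\!\left(\frac{\omega_k\Delta t}{2}\right)\) from \eqref{eq:num_freq}, together with the spectral functional calculus \eqref{def-h}--\eqref{def-norm-h}. Part (i) is immediate: \(\arctan\) maps \(\mathbb R\) into \((-\pi/2,\pi/2)\), so \(\omega_k^h\in(-\pi/\Delta t,\pi/\Delta t)\) for every \(k\). Meanwhile \(|\omega_k|=\sqrt{\mu_k+\alpha}\to\infty\) by Lemma~\ref{lem:spectrum_A}, since only finitely many multi-indices satisfy \(\sum_j k_j^2\le R\). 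Hence infinitely many modes are squeezed into a bounded band.

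For part (ii), I fix \(\Delta t>0\) and let \(k\to\infty\) along the sequence. Then \(|\omega_k^h|<\pi/\Delta t\) while \(|\omega_k|\to\infty\), and the reverse triangle inequality gives
\[
|\omega_k-\omega_k^h|\ge|\omega_k|-\pi/\Delta t\to\infty .
\]

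Part (iii) is the main point. By \eqref{def-norm-h}, with \(N=T/\Delta t\) steps (assume \(T/\Delta t\in\mathbb N\)),
\[
\left\|r_{\mathrm{CN}}(\Delta tA)^{N}-e^{-TA}\right\|=\sup_k\left|e^{-iN\omega_k^h\Delta t}-e^{-i\omega_kT}\right|=\sup_k\left|e^{-iT(\omega_k-\omega_k^h)}-1\right|=\sup_k 2\left|\sin\!\left(\tfrac{T(\omega_k-\omega_k^h)}{2}\right)\right|.
\]
The plan is to show that for each fixed \(\Delta t\) this supremum is bounded below by a constant independent of \(\Delta t\), in fact equal to \(2\).

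For fixed \(\Delta t\), the function
\[
\psi(\omega):=T\left(\omega-\tfrac{2}{\Delta t}\arctan\tfrac{\omega\Delta t}{2}\right)
\]
is continuous and increasing on \([0,\infty)\) (its derivative is nonnegative), vanishes at \(0\), and tends to \(\infty\). The spacing of the discrete values also matters:
\[
|\psi(\omega_{k+1})-\psi(\omega_k)|\le T|\omega_{k+1}-\omega_k|.
\]
This is the obstacle, because the gaps \(\omega_{k+1}-\omega_k\) between consecutive eigenfrequencies need not be small, and we need \(\psi(\omega_k)\) to come close to an odd multiple of \(\pi\).

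I would handle it using a single coordinate direction, \(k=(n,1,\dots,1)\). For these indices \(\omega=\sqrt{n^2\pi^2+c}\), with \(c=(N-1)\pi^2+\alpha\) and \(N\) here denoting the space dimension. Consecutive gaps tend to \(\pi\), so the increments of \(\psi\) tend to \(T\pi\). That alone does not give density modulo \(2\pi\). Instead I will use the asymptotics
\[
\psi(\omega)=T\omega-\tfrac{\pi T}{\Delta t}+O\!\left(\tfrac{1}{\omega\Delta t^2}\right),\qquad \omega\to\infty,
\]
so that \(\psi(\omega_k)\) modulo \(2\pi\) behaves like \(T\omega_k\) plus a constant. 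A weaker but sufficient target is a bound \(\ge\delta>0\) independent of \(\Delta t\).

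A cleaner route is to pick the mode adapted to \(\Delta t\): take \(\omega_k\approx 2/\Delta t\). There, \(\psi=T(\omega-\pi/(2\Delta t))\approx T(2-\pi/2)/\Delta t\). Varying over modes near this frequency, the values of \(\psi\) move in steps of size about \(T\pi\cdot(\text{derivative}\approx1-\tfrac{1}{2})\). If \(T\pi/2<2\pi\), i.e.\ the steps are below \(2\pi\), the values cannot all avoid a window of length greater than the step around \(\pi\) modulo \(2\pi\). This gives \(|\sin(\psi/2)|\ge\sin(\pi/2-T\pi/8)\) or a similar explicit positive constant.

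If \(T\) is large, I will instead use frequencies \(\omega\le\varepsilon/\Delta t\). There \(\psi'(\omega)=T\frac{(\omega\Delta t)^2/4}{1+(\omega\Delta t)^2/4}\) is small, so the step sizes \(\approx T\pi\psi'\) are below \(\pi\). Meanwhile the range of \(\psi\) over \([0,\varepsilon/\Delta t]\) is \(\approx T\varepsilon^3/(12\Delta t)\to\infty\). So \(\psi(\omega_k)\) passes within distance \(\pi/2\) of \(\pi\) modulo \(2\pi\), giving \(\sup_k|\ldots|\ge 2\sin(\pi/4)=\sqrt2\) for all small \(\Delta t\). This argument works for every \(T\), so I would commit to it.
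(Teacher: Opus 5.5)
Your proof is correct. Parts (i) and (ii) are the paper's own arguments, but for (iii) you take a genuinely different route.

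The paper fixes the number of steps $N$ and lets $k\to\infty$. The one-step Cayley factor tends to $-1$, so its $N$-th power tends to $(-1)^N$. Compactness of the unit circle gives a subsequence with $e^{-i\omega_{k_l}T}\to c$, chosen independently of $N$. Hence $E(\Delta t_N)\ge|(-1)^N-c|$, and since $|1-c|+|1+c|\ge 2$, one gets $E\ge 1$ along all even or all odd $N$.

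You instead write the error as $\sup_k 2|\sin(\psi(\omega_k)/2)|$ and work in the band $\omega\le\varepsilon/\Delta t$ with $T\varepsilon^2<4$. Three pieces of bookkeeping should be made explicit:
\begin{itemize}
\item Along $k=(n,1,\dots,1)$ the gaps satisfy $\omega_{n+1}-\omega_n\le\pi$ for every $n$, not just asymptotically, since $\omega_{n+1}+\omega_n\ge(2n+1)\pi$. So consecutive values of $\psi$ in the band differ by at most $\pi T\varepsilon^2/4<\pi$. Note the step is $\pi\psi'$, not $T\pi\psi'$, because $\psi'$ already carries the factor $T$.
\item The sequence starts low: $\psi(\omega_1)\le T\omega_1^3\Delta t^2/12<\pi/2$ for small $\Delta t$, using $x-\arctan x\le x^3/3$.
\item The last mode in the band lies within $\pi$ of $\varepsilon/\Delta t$, so its $\psi$-value is at least $\frac{T}{\Delta t}\bigl(\varepsilon-2\arctan\frac{\varepsilon}{2}\bigr)-\pi$, which exceeds $3\pi/2$ for small $\Delta t$.
\end{itemize}
With these, the increasing sequence $\psi(\omega_n)$ cannot jump over $[\pi/2,3\pi/2]$, and $E(\Delta t)\ge\sqrt2$ follows.

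Your approach buys two things. First, it gives a uniform lower bound for every sufficiently small admissible $\Delta t$, rather than only along a parity subsequence. Second, it shows the defect is not merely an artifact of the limit $k\to\infty$. It already appears at frequencies of order $\Delta t^{-2/3}$, where $\psi\approx T\omega^3\Delta t^2/12$ reaches $\pi$, well below the grid frequency $1/\Delta t$.

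The paper's compactness argument is shorter and needs no information about the spacing of the eigenfrequencies. Your exploratory variants (the large-$\omega$ expansion, and the mode near $2/\Delta t$, which needs $T<4$) can be dropped once you commit to the band argument.
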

\begin{proof}
\( \)\newline
\noindent\textit{Proof of (i).} Since the range of \(\arctan\) is \(\left(-\dfrac{\pi}{2},\dfrac{\pi}{2}\right)\), for each \(\Delta{t}>0\) and \(k \in  \mathbb{N}^*\), we have
\begin{equation*}
    \left|\omega_k^h\right| = \dfrac{2}{\Delta{t}}\left|\arctan\left(\dfrac{\omega_k\Delta{t}}{2}\right)\right| < \dfrac{2}{\Delta{t}}\cdot\dfrac{\pi}{2} = \dfrac{\pi}{\Delta{t}},
\end{equation*}
which means \(\omega_k^h \in \left(-\dfrac{\pi}{\Delta{t}},\dfrac{\pi}{\Delta{t}}\right)\) whereas \(|\omega_k| \to \infty\) as \(k \to \infty\). \\
\noindent\textit{Proof of (ii).}
For each \(\Delta{t}>0\) and \(k \in \mathbb{N}^*\), using \(\left|\omega_k^h\right|<\dfrac{\pi}{\Delta{t}}\), we obtain
\begin{equation*}
    \left|\omega_k - \omega_k^h\right| \ge \left|\omega_k\right| -  \left|\omega_k^h\right| > \left|\omega_k\right|-\dfrac{\pi}{\Delta{t}}.
\end{equation*}
Thus, letting \(k \to \infty\) and noting that \(|\omega_k|\to \infty\), we have \(\left|\omega_k - \omega_k^h\right| \to \infty\). \\
\textit{Proof of (iii).} Consider the operator norm of the error between the discrete propagator and the exact semi-group at the final time \(T\)
\begin{equation*} 
E(\Delta{t}) = \left\| \left(I + \dfrac{\Delta t}{2}A\right)^{-\frac{T}{\Delta{t}}} \left(I - \dfrac{\Delta t}{2}A\right)^{\frac{T}{\Delta{t}}} - e^{-TA} \right\|_{\mathcal{L}(\mathcal{E})} = \sup_{k \in \mathbb{N}^*} \left| \left(\dfrac{1 - \dfrac{i\omega_k\Delta{t}}{2}}{1 + \dfrac{i\omega_k\Delta{t}}{2}}\right)^{\frac{T}{\Delta{t}}} - e^{-i\omega_kT} \right|.
\end{equation*}
For each \(N \in \mathbb{N}^*\), let \(\Delta t_N=T/N\). We prove that
	\[
	E(\Delta{t_N})=\left\|
	\left(I+\frac{\Delta t_N}{2}A\right)^{-N}
	\left(I-\frac{\Delta t_N}{2}A\right)^N
	-
	e^{-TA}
	\right\|_{\mathcal L(\mathcal E)}
	\not\to0
	\qquad\text{as }N\to\infty.
	\]
We begin by analyzing the behavior of the terms inside the absolute value as \(k \to \infty\) (which yields \(\omega_k \to \infty\)). 
First, the numerical amplification factor approaches \((-1)^N\), which follows from \(\lim\limits_{\omega_k \to \infty}{\dfrac{1 - \dfrac{i\omega_k\Delta{t_N}}{2}}{1 + \dfrac{i\omega_k\Delta{t_N}}{2}}} = -1\).
Second, the exact amplification factor  \(e^{-i\omega_kT}=\cos(\omega_kT) - i\sin(\omega_kT)\) oscillates continuously on the unit circle in the complex plane as \(\omega_k \to \infty\), and therefore does not possess a limit. As a consequence, 
\begin{equation*}
    E(\Delta{t}_N) = \sup_{k \in \mathbb{N}^*} \left| \left(\dfrac{1 - \dfrac{i\omega_k\Delta{t_N}}{2}}{1 + \dfrac{i\omega_k\Delta{t_N}}{2}}\right)^{N} - e^{-i\omega_kT} \right| \ge \limsup_{k \to \infty}\left| \left(\dfrac{1 - \dfrac{i\omega_k\Delta{t_N}}{2}}{1 + \dfrac{i\omega_k\Delta{t_N}}{2}}\right)^{N} - e^{-i\omega_kT} \right| > 0.
\end{equation*}
Moreover, since \(\left(e^{-i\omega_kT}\right)_{k \in \mathbb{N}^*} \subset \{z \in \mathbb{C} : |z|=1\}\) and \(\{z \in \mathbb{C} : |z|=1\}\) is a compact set, there exists a subsequence \(\left(e^{-i\omega_{k_l}T}\right)_{l \in \mathbb{N}^*}\) of \(\left(e^{-i\omega_kT}\right)_{k \in \mathbb{N}^*}\) that converges to some \(c \in \mathbb{C}\) not depending on \(N\). In particular, we have \(|c| = \lim\limits_{l \to \infty}{\left|e^{-i\omega_{k_l}T}\right|} = 1\). Thus, it holds that
\begin{equation*}
    E(\Delta{t}_N) \ge \lim_{l \to \infty}\left| \left(\dfrac{1 - \dfrac{i\omega_{k_l}\Delta{t_N}}{2}}{1 + \dfrac{i\omega_{k_l}\Delta{t_N}}{2}}\right)^{N} - e^{-i\omega_{k_l}T} \right| = \left|(-1)^{N}-c\right|, \quad\text{for all } N \in \mathbb{N}^*.
\end{equation*}
It follows that 
\begin{align*}
    E\left(\Delta{t_{2N}}\right) &\ge |1-c|, \quad\text{for all } N \in \mathbb{N}^*.  \\
    E\left(\Delta{t_{2N+1}}\right) &\ge |-1-c|, \quad\text{for all } N \in \mathbb{N}^*. 
\end{align*}
Since \(\max\{|1-c|, |-1-c|\} \ge \dfrac{|1-c| + |-1-c|}{2} \ge \dfrac{|1-c-(-1-c)|}{2}=1\), either \(|1-c| \ge 1\) or \(|-1-c| \ge 1\). Consequently, at least one of the following holds: \(E\left(\Delta{t_{2N}}\right) \ge 1\) for all \(N \in \mathbb{N}^*\) or \(E\left(\Delta{t_{2N+1}}\right) \ge 1\) for all \(N \in \mathbb{N}^*\), which implies \(E\left(\Delta{t_{2N}}\right) \not\to 0\) or \(E\left(\Delta{t_{2N+1}}\right) \not\to 0\) as \(N \to \infty\). Therefore, \(E\left(\Delta{t_{N}}\right) \not\to 0\) as \(N \to \infty\), which yields \(E(\Delta{t}) \not\to 0\) as \(\Delta{t} \to 0\).
\end{proof}

\begin{remark}
		The preceding proposition gives a precise interpretation of the Crank--Nicolson discretization for the wave equation.		
		\begin{itemize}
			\item Property~(i) shows that the modified numerical frequencies \(\omega_k^h\) are confined to a bounded interval depending on \(\Delta t\), whereas the true frequencies satisfy \(|\omega_k|\to\infty\). Hence, high frequencies are compressed into a bounded numerical band.
			
			\item Property~(ii) quantifies the resulting phase dispersion: the frequency error \(\left|\omega_k-\omega_k^h\right|\) is unbounded as \(k\to\infty\). Over a fixed time horizon \(T\), the accumulated phase discrepancy is
			\[
			T\left|\omega_k-\omega_k^h\right|,
			\]
			which diverges for high-frequency modes.
			
			\item Property~(iii) shows that Crank--Nicolson operator does not converge to the exact unitary group in operator norm. Thus, although the scheme is energy-preserving and strongly convergent for each fixed initial datum, it is not uniformly accurate over the full spectrum. This lack of uniform spectral accuracy is precisely the obstruction relevant for uniform controllability estimates.
		\end{itemize}
	\end{remark}

\begin{remark}
The preceding computations should be interpreted as a spectral diagnostic. They do not imply that all classical discretizations of wave equations are unsuitable for control computations. Rather, they show that standard unfiltered one-step schemes may distort the unitary wave group when viewed uniformly over the full unbounded spectrum. Explicit Euler amplifies high frequencies, implicit Euler damps them, and Crank--Nicolson preserves amplitudes but compresses high-frequency phases. Since the controllability of wave equations is sensitive to the propagation and observation of all relevant modes, such distortions can interfere with uniform discrete controllability mechanisms. This motivates the structure-preserving modal formulation adopted in this paper.
\end{remark}

\section{Numerical Tests}\label{sec:numerical_tests}

\subsection{A numerical experiment with strong semilinear nonlinearity}

The purpose of this section is to test the Neural-Spectral framework in a strongly nonlinear regime in which the Galerkin--Schauder HUM fixed-point implementation considered here no longer converges. The experiment is not intended as a comparison with an exact semilinear HUM control, nor as a statement about all possible HUM-based numerical realizations. Rather, it compares the proposed residual-based Neural-Spectral method with one natural fixed-point HUM implementation in a regime where this implementation loses its effective contraction property.

\subsubsection*{Problem setup}

We consider \eqref{Wave1} with
\[
N=1, \qquad
\Omega=(0,1), \qquad
\alpha=0,
\]
and the globally Lipschitz nonlinearity
\[
g(u)=\beta\sin(u), \qquad \beta=20.
\]
Thus, \(g\) satisfies the Lipschitz hypothesis of Theorem~\ref{thm:res-bound} with constant
\[
L_g=\beta=20.
\]

The control acts on the boundary collar
\[
\omega_\delta
=
\left\{
x\in\Omega:
\min\{x,1-x\}<\delta
\right\},
\qquad
\delta=0.3,
\]
and we take \(T=2.5\). Equivalently,
\[
\omega_\delta
=
\Omega\setminus[\delta,1-\delta]=(0,\delta)\cup(1-\delta,\,1).
\]
For \(\delta=0.3\), this gives
\[
|\omega_\delta|
=
1-(1-2\delta)
=
1-0.4
=
0.6,
\]
so the controlled domain occupies \(60\%\) of \(\Omega\).
In addition, note that the collar \(\omega_\delta\) contains a neighbourhood of both endpoints \(0\) and \(1\). Hence, every unit-speed ray of geometric optics reflecting on \(\partial\Omega\) must meet \(\omega_\delta\) within a maximum time of
\[
1 - 2\delta = 0.4 < T,
\]
implying that \((\omega_\delta, T)\) satisfies the one-dimensional Geometric Control Condition (GCC).

The initial data are
\[
u^0(x)
=
\sin(\pi x)
+
\frac12\sin(2\pi x)
-
\frac{3}{10}\sin(3\pi x),
\qquad
u^1\equiv0,
\]
and the target state is
\[
\left(u_T^0,u_T^1\right)=(0,0).
\]
The energy norm of the initial state, which serves as the baseline for evaluating the performance of the control, is \(\left\|\left(u^0,u^1\right)\right\|_{\mathcal{E}}=3.72\).

We use the spectral truncation order \(P=8\). The \(L^2(\Omega)\)-orthonormal Dirichlet basis is
\[
e_j(x)=\sqrt{2}\sin(j\pi x),
\qquad j=1,\ldots,P,
\]
with frequencies
\[
\omega_j=j\pi .
\]

\subsubsection*{Galerkin--Schauder HUM iteration}

For the linear problem \(g\equiv0\), the Hilbert Uniqueness Method (HUM) introduced by J.-L. Lions in \cite{Lions1988} gives, via duality, a constructive minimum-energy control in the corresponding Galerkin space. For the semilinear problem, there is no analogous closed-form formula. The controllability proof in~\cite{Zuazua1993} is based on a Schauder/Leray--Schauder fixed-point argument, and a direct numerical implementation leads naturally to a Picard-type iteration on a sequence of linearized HUM subproblems. We use this implementation as a reference and refer to it as the \emph{Galerkin--Schauder HUM} iteration, abbreviated HUM-S.

By HUM duality for the linearized problem, the adjoint state \(\phi\) satisfies the homogeneous wave equation \(\partial_{tt}\phi - \Delta\phi = 0\). Using the same truncation space \(V_P\) as in (9), we project \(\phi\) onto \(E_P=\operatorname{span}\{e_j\}_{j=1}^P\), yielding the harmonic oscillator ODE \(p_k''(t) + \omega_k^2 p_k(t) = 0\) for each temporal mode. Thus, we can parameterize the adjoint state by
\[
\phi(t,x)
=
\sum_{k=1}^{P}p_k(t)e_k(x),
\qquad
p_k(t)
=
A_k\cos(\omega_k t)+B_k\sin(\omega_k t),
\]
where \((A_k,B_k)_{k=1}^P\in\mathbb R^{2P}\) are the unknown coefficients.
By HUM duality, the candidate control is
\[
f(t,x)=\phi(t,x)\mathbf 1_\omega(x).
\]
Its projection onto \(E_P=\operatorname{span}\{e_j\}_{j=1}^P\) has modal
coefficients
\[
c_j(t)
=
\sum_{k=1}^{P}M_{jk}p_k(t),
\qquad
M_{jk}
:=
\int_\omega e_j(x)e_k(x)\,dx .
\]
The matrix \(M\in\mathbb R^{P\times P}\) is computed explicitly from
elementary trigonometric identities.

Let
\[
a_j(t):=\langle u(t,\cdot),e_j\rangle_{L^2(\Omega)} .
\]
The projected state satisfies the nonlinear ODE system
\begin{equation}\label{eq:hum_s_ode}
a_j''(t)+\omega_j^2a_j(t)+S_j(t)+c_j(t)=0,
\qquad j=1,\ldots,P,
\end{equation}
where
\[
S_j(t)
:=
\int_\Omega
g\left(\sum_{\ell=1}^{P}a_\ell(t)e_\ell(x)\right)e_j(x)\,dx .
\]
The HUM-S iteration proceeds by freezing the nonlinear source \(S\) from the
current iterate. Specifically, at iteration \(n+1\), we seek a control \(c^{(n+1)}\) and state coefficients \(a^{(n+1)}\) satisfying the linearized ODE system
\begin{equation}\label{eq:HUM_iteration}
    \frac{d^2}{dt^2} a_j^{(n+1)}(t) + \omega_j^2 a_j^{(n+1)}(t) + S_j^{(n)}(t) + c_j^{(n+1)}(t) = 0, \quad j = 1, \dots, P,
\end{equation}
where \(S_j^{(n)}(t) := \displaystyle\int_{\Omega} g\left(\sum_{\ell=1}^P a_\ell^{(n)}(t)e_\ell(x)\right)e_j(x) \, dx\). The system \eqref{eq:HUM_iteration} is a linear control problem, which we solve for \(\left(A^{(n+1)}, B^{(n+1)}\right)\) by least-squares inversion of the control-to-final-state map. The state is then updated by forward integration of \eqref{eq:hum_s_ode}. All
ODE integrations are performed with the eighth-order Dormand--Prince method DOP853 with relative tolerance \(10^{-11}\).

For small Lipschitz constants \(L_g\), this fixed-point procedure is observed to converge geometrically. At \(\beta=20\), however, the iteration is no longer contractive in practice. Figure~\ref{fig:humS_history} shows the successive change
\[
\left\|u^{(n+1)}-u^{(n)}\right\|_{L^2((0,T);L^2(\Omega))}
\]
as a function of the outer iteration index \(n\). After a short transient, the difference grows rapidly and then settles into a persistent oscillatory regime around \(3.66\). Thus, the HUM-S iteration does not converge for this test. Moreover, when the final iterate's control is inserted into a high-accuracy DOP853 integration of the full nonlinear semi-discrete system, the terminal error in the energy norm is \(2.90\), which is comparable to the size of the initial energy. Hence, this nonconverged HUM-S iterate does not provide a useful steering control in this regime.

\begin{figure}[ht]
\centering
\includegraphics[width=0.70\textwidth]{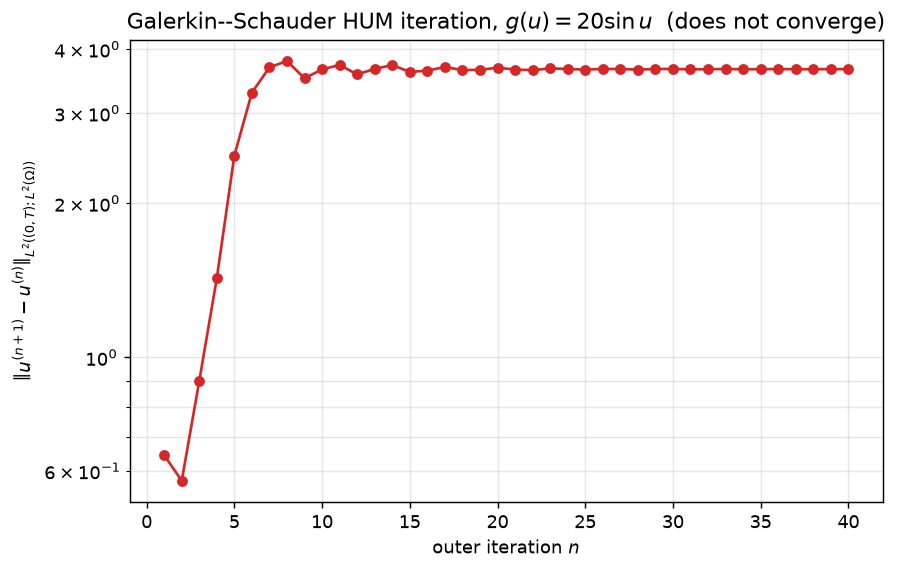}
\caption{Successive change of the Galerkin--Schauder HUM iterates at \(\beta=20\). After a short transient, the difference grows rapidly and then settles into a persistent oscillatory regime around \(3.66\). The iteration does not converge.}
\label{fig:humS_history}
\end{figure} 

\subsubsection*{Neural-Spectral controller}

The Neural-Spectral method of Section~\ref{sub:neural_spectral_framework} applies without modification. We use the ansatz
\[
\widetilde u_{\theta_a}(t,x)
=
\sum_{j=1}^{P}a_j(t;\theta_a)e_j(x),
\qquad
\widetilde f_{\theta_c}(t,x)\,\mathbf{1}_\omega
=
\sum_{j=1}^{P}c_j(t;\theta_c)e_j(x)\,\mathbf{1}_\omega,
\]
with shallow per-mode networks of widths
\[
m_a=m_c=64.
\]
The residual \eqref{Wave9} becomes
\[
R_j(t;\theta)
=
a_j''(t;\theta_a)
+
\omega_j^2a_j(t;\theta_a)
+
Q_j(t;\theta_a)
+\sum_{\ell=1}^P M_{j\ell}\,c_\ell(t;\theta_c),
\qquad j=1,\ldots,P,
\]
where
\[
Q_j(t;\theta_a)
:=
\int_\Omega
g\left(
\sum_{\ell=1}^{P}a_\ell(t;\theta_a)e_\ell(x)
\right)e_j(x)\,dx .
\]
The nonlinear Galerkin terms \(Q_j\) are evaluated by composite trapezoidal quadrature on \(257\) spatial nodes. Training uses \(250\) collocation points, endpoint weights
\[
\lambda_{\mathrm{IC}}=\lambda_{\mathrm{TC}}=500,
\]
\(3000\) Adam steps, followed by \(12\) blocks of \(400\) L-BFGS iterations. The total wall time is approximately \(90\) seconds on a single CPU.

\subsubsection*{Result at \texorpdfstring{\(\beta=20\)}{beta=20}}
A crucial point of interpretation is that exact controllability is fundamentally a feasibility statement. Even in a finite-dimensional Galerkin space, there may be many valid state-control pairs satisfying both the projected dynamics and the required endpoint conditions. Indeed, note that the overlap matrix \(M_P = \left(M_{j\ell}\right)_{j,\ell=1}^P\) is positive definite. For any nonzero vector \(\xi\in\mathbb{R}^P\), we have
\[
    \xi^T M_P \xi = \int_{\omega_\delta} \left|\sum_{j=1}^P \xi_j e_j(x,y)\right|^2 \,dx\,dy > 0.
\]
The strict inequality holds because a nontrivial finite linear combination of Dirichlet eigenfunctions cannot vanish identically on the open set \(\omega_\delta\). Consequently, \(M_P\) is invertible. Thus, given a sufficiently smooth modal path
\[
\widetilde u(t,x)
=
\sum_{j=1}^{P}a_j(t)e_j(x)
\]
satisfying the prescribed initial and terminal conditions, the equation
\[
\sum_{\ell=1}^P M_{j\ell}\,c_\ell(t;\theta_c)
=
-\left(a_j''(t)+\omega_j^2a_j(t)+Q_j(t)\right),
\qquad j=1,\ldots,P,
\]
uniquely defines a control vector that forces the projected residual to vanish. Different choices of the path \(\left\{a_j(t)\right\}_{j=1}^P\) therefore generate different admissible controls. The loss \eqref{Wave10} does not contain a minimum-energy selection principle; it seeks a dynamically consistent control-state pair satisfying the endpoint constraints. Thus, the learned control should be judged by whether it actually steers the nonlinear dynamics to the target, rather than by its proximity to the nonconverged HUM-S iterate.

In the reported run, the Neural-Spectral optimization converges to a small residual and yields a forward-validated steering control. The final diagnostics are reported in Table~\ref{tab:nn_metrics}. The self-reported terminal error from the network is exceptionally small \(\left(3.13 \times 10^{-3}\right)\). More importantly, the forward-validation error, obtained by independently integrating the full nonlinear semi-discrete system using the recovered control with a high-accuracy DOP853 solver, is \(4.33 \times 10^{-1}\). Combined with a low accumulated residual sum \(\left(5.89 \times 10^{-1}\right)\), this demonstrates that the recovered control genuinely steers the physical system. This forward-validation error is nearly one order of magnitude smaller than the terminal error \(2.90\) obtained from the nonconverged HUM-S iterate.

\begin{table}[ht]
\centering
\begin{tabular}{l|c}
\hline
Quantity & Value \\
\hline
\(\left\|\widetilde\Phi(T,\cdot)-\Phi_T\right\|_{\mathcal E}\)  (network output)
&
\(3.13 \times 10^{-3}\) \\[2pt]
\(\left\|\Phi(T,\cdot)-\Phi_T\right\|_{\mathcal E}\)  (forward validation)
&
\(4.33 \times 10^{-1}\) \\[2pt]
\(\displaystyle \sum_{j=1}^{P}\|R_j(\cdot;\theta)\|_{L^2(0,T)}^2\)
&
\(5.89\times10^{-1}\) \\
\hline
\end{tabular}
\caption{Diagnostics for the trained Neural-Spectral method at \(\beta=20\), \(P=8\). The forward-validation error is obtained by driving the full nonlinear semi-discrete system with the recovered control. The same metric for the nonconverged HUM-S iterate is \(2.90\).}
\label{tab:nn_metrics}
\end{table}

The learned control is shown in Figure~\ref{fig:nn_control}. Panel (a) displays the spectral coefficients \(c_j(t)\), \(j=1,\ldots,P\). Panel (b) shows the corresponding spatial profile
\[
\widetilde f_{\theta_c}(t,x)
=
\sum_{j=1}^{P}c_j(t)e_j(x)
\]
at three intermediate times. The shaded band marks the actuator region
\(\omega_{\delta}=(0,0.3) \cup (0.3,0.7)\).

\begin{figure}[ht]
\centering
\includegraphics[width=0.97\textwidth]{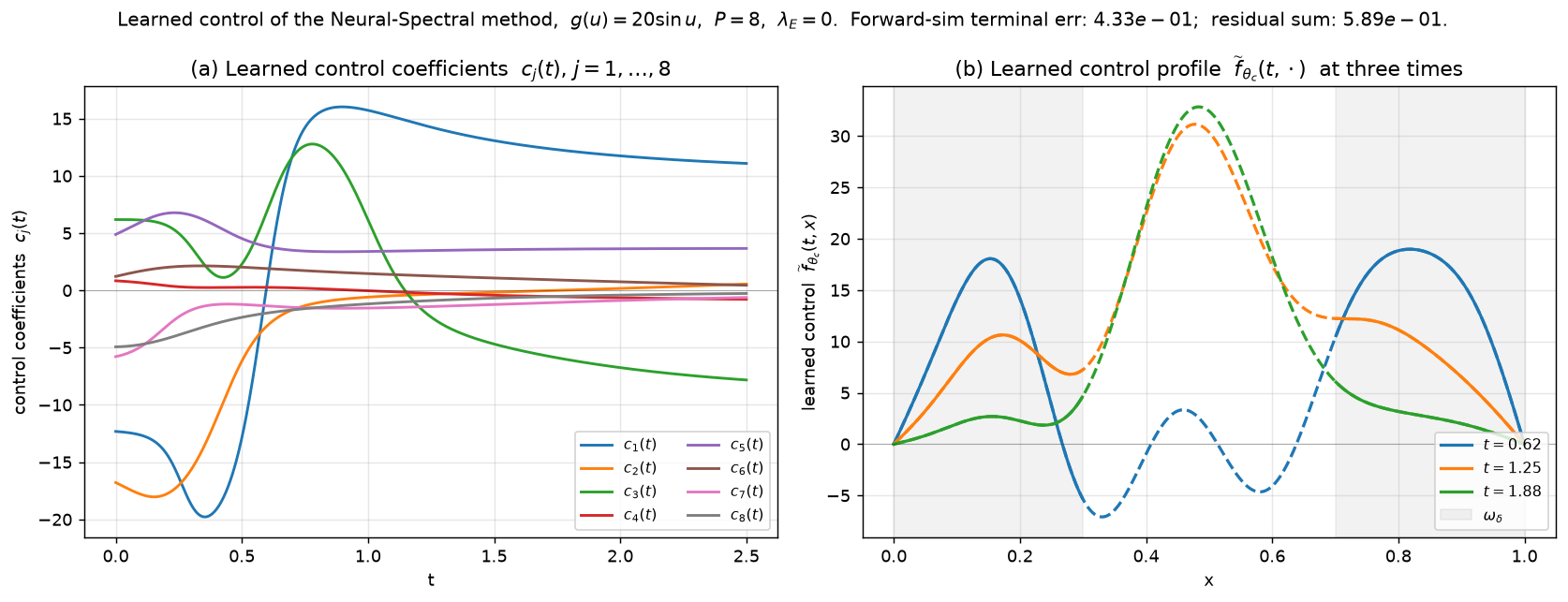}
\caption{A control found by the Neural-Spectral method at \(\beta=20\), \(P=8\). \\ \emph{(a):} Spectral coefficients \(c_j(t)\). \\
\emph{(b):} Spatial profile \(\widetilde f_{\theta_c}(t,\cdot)\) at \(t=0.62,1.25,1.88\); the shaded band denotes the controlled region \(\omega_{\delta}=(0,0.3) \cup (0.3,0.7)\)}.
\label{fig:nn_control}
\end{figure}

\begin{remark}[Interpretation of the comparison]
The failure observed above is not a failure of HUM theory itself. It is the failure of a particular Galerkin--Schauder fixed-point implementation in a strongly nonlinear regime. This distinction is important. For small Lipschitz constants, the frozen-source HUM map behaves contractively, and the iteration converges. For \(\beta=20\), the effective contraction property is lost, and the iterates fail to stabilize. Our Neural-Spectral method uses a different mechanism. It does not iterate a linearized HUM map. Instead, it directly minimizes the residual and endpoint mismatch over a joint state-control parameterization. The Lipschitz constant \(L_g\) affects the conditioning and nonconvexity of this optimization problem, but it does not enter through a global contraction requirement. This explains why the Neural-Spectral formulation can still find a useful steering control in a regime where the tested HUM-S implementation does not converge, in agreement with the approximation results of Theorems~\ref{thm:approx_con} and~\ref{thm:err-sol}. 
\end{remark}

\subsection{2D wave equation: Neural-Spectral method vs.\ standard PINN}

In the previous sections, the Neural-Spectral method was compared against the classical Galerkin--Schauder HUM iteration, which is the natural reference within the spectral framework. Here we step outside the spectral framework and compare against a different, widely used family of PDE-constrained learning methods: \emph{physics-informed neural networks} (PINNs) in the joint \((t,x,y)\) variables. The question is operational: at comparable parameter count and wall time, how does a standard soft-constrained PINN perform on the controlled wave equation in a genuinely two-dimensional spatial domain?

\subsubsection*{Problem}

We consider \eqref{Wave1} on
\[
\Omega=(0,1)^2
\]
with the boundary-collar actuator domain
\[
\omega_\delta
=
\left\{
(x,y)\in\Omega:
\min\{x,1-x,y,1-y\}<\delta
\right\},
\qquad
\delta=0.3.
\]
Equivalently,
\[
\omega_\delta
=
\Omega\setminus[\delta,1-\delta]^2
=
\Omega\setminus[0.3,0.7]^2.
\]
Thus, \(\omega_\delta\) is the shell of width \(0.3\) adjacent to
\(\partial\Omega\), and
\[
|\omega_\delta|
=
1-(1-2\delta)^2
=
1-0.4^2
=
0.84.
\]
Hence, the actuator domain occupies \(84\%\) of \(\Omega\). We take
\[
T=7,\qquad \alpha=0,
\]
and no nonlinearity,
\[
g\equiv0.
\]
The controlled equation is therefore
\[
u_{tt}-\Delta u+f\,\mathbf 1_{\omega_\delta}=0
\quad\text{on }(0,T)\times\Omega,
\qquad
u|_{\partial\Omega}=0.
\]

The actuator \(\omega_\delta\) is compatible with the Geometric Control Condition (GCC). Indeed, a unit-speed ray that has not entered \(\omega_\delta\) must remain inside the central square \([\delta,1-\delta]^2\). 
Let \(v = (v_1, v_2)\) be the direction vector of this unit-speed ray, which implies its Euclidean norm is \(\|v\|_{\mathbb{R}^2} = \sqrt{v_1^2 + v_2^2} = 1\). Since \(v_1^2 + v_2^2 \le 2\max\left\{|v_1|^2, |v_2|^2\right\}\), it immediately follows that
\[
\max\{|v_1|,|v_2|\}\ge \frac1{\sqrt2}.
\]
Moreover, the maximum distance the ray can travel along any coordinate axis while remaining inside the central square is exactly its side length, \(1-2\delta\). Therefore, the total time \(T_\delta\) the ray can spend inside the central square is bounded by the time it takes to exit along the fastest coordinate direction, that is,
\[
T_\delta
\le \min\left\{ \frac{1-2\delta}{|v_1|}, \frac{1-2\delta}{|v_2|} \right\} = 
\frac{1-2\delta}{\max\{|v_1|,|v_2|\}}
\le
\sqrt2(1-2\delta).
\]
For \(\delta=0.3\), this gives
\[
T_\delta\le0.4\sqrt2\approx0.566.
\]
Thus, the choice \(T=7\) is safely above this sufficient GCC time.

The initial data are
\[
u^0(x,y)
=
\sin(\pi x)\sin(\pi y)
+
\frac12\sin(\pi x)\sin(2\pi y),
\qquad
u^1\equiv0,
\]
and the target state is
\[
\left(u_T^0,u_T^1\right) \equiv (0,0).
\]
The \(L^2(\Omega)\)-orthonormal Dirichlet sine basis is
\[
e_{j,k}(x,y)
=
2\sin(j\pi x)\sin(k\pi y),
\qquad j,k\in\mathbb N^*,
\]
with eigenvalues
\[
\mu_{j,k}
=
\left(j^2+k^2\right)\pi^2.
\]
We use the spectral truncation
\[
j,k\in\{1,2,3,4\},
\]
that is, \(P_{\max}^2=16\) retained modes for the spectral method. The PINN itself is not spectrally truncated during training.

Let \(m=(j,k)\) and \(n=(j',k')\) denote multi-indices in \(\{1,2,3,4\}^2\). If the lifted control field is written as
\[
\widetilde f(t,x,y)
=
\sum_n c_n(t)e_n(x,y),
\]
then the actual applied control is
\[
\mathbf 1_{\omega_\delta}\widetilde f.
\]
The projected dynamics are
\begin{equation}\label{eq:test6-spec-ode}
\ddot a_m(t)+\mu_m a_m(t)+\sum_n M_{mn}c_n(t)=0,
\qquad m\in\{1,2,3,4\}^2,
\end{equation}
where
\[
M_{mn}
=
\int_{\omega_\delta}
e_m(x,y)e_n(x,y)\,dx\,dy.
\]
For \(\delta=0.3\), the overlap matrix \(M\) has eigenvalues in
\[
[0.085,1.000]
\]
and condition number approximately \(11.7\), so the retained finite-dimensional control problem is well-conditioned.

The energy norm of the initial data is
\[
\left\|\left(u^0,u^1\right)\right\|_{\mathcal E}^2
=
2\pi^2\left(\frac12\right)^2
+
5\pi^2\left(\frac14\right)^2
\approx 8.02,
\]
and therefore
\[
\left\|\left(u^0,u^1\right)\right\|_{\mathcal E}\approx2.83.
\]
Since the target is zero and the uncontrolled linear wave equation conserves energy, any control that has a negligible effect on the dynamics will produce a forward-simulation terminal error close to this value.

\subsubsection*{Method 1: Neural-Spectral method}

For each retained mode \((j,k)\in\{1,\ldots,4\}^2\), we use two independent shallow \(\tanh\) networks of width \(64\),
\[
a_{j,k}(t;\theta_a),
\qquad
c_{j,k}(t;\theta_c),
\]
for the state coefficient and the lifted-control coefficient. The parameters are trained by minimizing the unsupervised projected residual loss
\begin{align*}
&\mathcal L_{\mathrm{spec}}(\theta_a,\theta_c)
=
\int_0^T
\sum_m
\left|
\ddot a_m(t;\theta_a)
+
\mu_m a_m(t;\theta_a)
+
\sum_n M_{mn}c_n(t;\theta_c)
\right|^2
\,dt \\
&+\lambda\left(\left\|
\sum_{m} \left(a_m(0;\theta_a), \dot a_m(0;\theta_a)\right)e_j-\left(u^0, u^1\right)
\right\|_{\mathcal E}^2
+
\left\|
\sum_{m}\left(a_m(T;\theta_a),\dot a_m(T;\theta_a)\right)e_j-\left(u_T^0, u_T^1\right)
\right\|_{\mathcal E}^2\right),
\end{align*}
with \(\lambda=100\). The initial and terminal mismatch terms are measured in the energy norm. The residual is the projected PDE residual of \eqref{eq:test6-spec-ode}; consequently, driving this residual to zero together with the endpoint constraints yields a true solution of the retained truncated system.

In the present experiment,
\[
u^1\equiv0,
\qquad
(u_T^0,u_T^1)=(0,0),
\]
so this loss reduces to
\begin{align*}
\mathcal L_{\mathrm{spec}}(\theta_a,\theta_c)
=&{}
\int_0^T
\sum_m
\left|
\ddot a_m(t;\theta_a)
+
\mu_m a_m(t;\theta_a)
+
\sum_n M_{mn}c_n(t;\theta_c)
\right|^2
\,dt \\
&+\lambda\left(\left\|
\sum_{m} \left(a_m(0;\theta_a), \dot a_m(0;\theta_a)\right)e_j-\left(u^0, 0\right)
\right\|_{\mathcal E}^2
+
\left\|
\sum_{m}\left(a_m(T;\theta_a),\dot a_m(T;\theta_a)\right)e_j
\right\|_{\mathcal E}^2\right).
\end{align*}

The training schedule uses \(250\) time-collocation points, \(2000\) Adam
steps, followed by \(25\) blocks of \(300\) L-BFGS iterations. The total
number of trainable parameters is \(6144\).

\begin{remark}
The choice of penalty weight \(\lambda=100\) is motivated by the need to mitigate gradient pathologies during the optimization process. Empirically, the gradients with respect to the PDE residual often exhibit significantly larger magnitudes than those associated with the initial and terminal conditions. By assigning heavier weights to the latter, we ensure that the network prioritizes the given initial and terminal constraints. This prevents the optimizer from converging to trivial, physically meaningless solutions that satisfy the interior PDE dynamics but violate the prescribed data.
\end{remark}

\subsubsection*{Method 2: PINN in \texorpdfstring{\((t,x,y)\)}{(t,x,y)}}

The PINN baseline uses two fully connected feedforward networks with \(\tanh\) activations, width \(40\), and depth \(4\):
\[
u_\theta:\mathbb R^3\to\mathbb R,
\qquad
f_\theta:\mathbb R^3\to\mathbb R.
\]
The first network represents the state, and the second represents the control field. The actuator is imposed through the indicator
\[
\chi_{\omega_\delta}(x,y)
=
\mathbf 1_{\{\min\{x,\,1-x,\,y,\,1-y\}<\delta\}}.
\]

Let
\[
\mathcal G_{\mathrm{int}}
=
\{(t_q,x_q,y_q)\}_{q=1}^{N_{\mathrm{int}}}
\subset (0,T)\times\Omega
\]
be the interior collocation set for the PDE residual,
\[
\mathcal G_{\mathrm{bc}}
=
\{(t_q,x_q,y_q)\}_{q=1}^{N_{\mathrm{bc}}}
\subset (0,T)\times\partial\Omega
\]
be the boundary collocation set for the homogeneous Dirichlet condition, and
\[
\mathcal G_{\mathrm{ep}}
=
\{(x_q,y_q)\}_{q=1}^{N_{\mathrm{ep}}}
\subset \Omega
\]
be the endpoint grid used at \(t=0\) and \(t=T\). 
To solve this control problem using the standard PINN framework (as originally introduced by Raissi et al. \cite{Raissi2019}), we reformulate the constrained PDE system into an unconstrained multi-objective optimization problem. The physical laws and boundary constraints are enforced as soft penalty terms in the empirical loss function. The explicit PINN loss is     
\[
\begin{aligned}
\mathcal L_{\mathrm{PINN}}(\theta)
={}&
\Lambda_{\mathrm{PDE}}
\frac{1}{N_{\mathrm{int}}}
\sum_{q=1}^{N_{\mathrm{int}}}
\left|
\partial_{tt}u_\theta(t_q,x_q,y_q)
-
\partial_{xx}u_\theta(t_q,x_q,y_q)
-
\partial_{yy}u_\theta(t_q,x_q,y_q)
+
f_\theta(t_q,x_q,y_q)\chi_{\omega_\delta}(x_q,y_q)
\right|^2
\\
&+
\Lambda_{\mathrm{BC}}
\frac{1}{N_{\mathrm{bc}}}
\sum_{q=1}^{N_{\mathrm{bc}}}
\left|
u_\theta(t_q,x_q,y_q)
\right|^2
\\
&+
\Lambda_{\mathrm{IC}}
\frac{1}{N_{\mathrm{ep}}}
\sum_{q=1}^{N_{\mathrm{ep}}}
\left[
\left|
u_\theta(0,x_q,y_q)-u^0(x_q,y_q)
\right|^2
+
\left|
\partial_t u_\theta(0,x_q,y_q)-u^1(x_q,y_q)
\right|^2
\right]
\\
&+
\Lambda_{\mathrm{TC}}
\frac{1}{N_{\mathrm{ep}}}
\sum_{q=1}^{N_{\mathrm{ep}}}
\left[
\left|
u_\theta(T,x_q,y_q)-u_T^0(x_q,y_q)
\right|^2
+
\left|
\partial_t u_\theta(T,x_q,y_q)-u_T^1(x_q,y_q)
\right|^2
\right].
\end{aligned}
\]
In the present experiment,
\[
u^1\equiv0,
\qquad
(u_T^0,u_T^1)=(0,0),
\]
so this loss reduces to
\[
\begin{aligned}
\mathcal L_{\mathrm{PINN}}(\theta)
={}&
\Lambda_{\mathrm{PDE}}
\frac{1}{N_{\mathrm{int}}}
\sum_{q=1}^{N_{\mathrm{int}}}
\left|
\partial_{tt}u_\theta(t_q,x_q,y_q)
-
\partial_{xx}u_\theta(t_q,x_q,y_q)
-
\partial_{yy}u_\theta(t_q,x_q,y_q)
+
f_\theta(t_q,x_q,y_q)\chi_{\omega_\delta}(x_q,y_q)
\right|^2
\\
&+
\Lambda_{\mathrm{BC}}
\frac{1}{N_{\mathrm{bc}}}
\sum_{q=1}^{N_{\mathrm{bc}}}
\left|
u_\theta(t_q,x_q,y_q)
\right|^2
\\
&+
\Lambda_{\mathrm{IC}}
\frac{1}{N_{\mathrm{ep}}}
\sum_{q=1}^{N_{\mathrm{ep}}}
\left[
\left|
u_\theta(0,x_q,y_q)-u^0(x_q,y_q)
\right|^2
+
\left|
\partial_t u_\theta(0,x_q,y_q)
\right|^2
\right]
\\
&+
\Lambda_{\mathrm{TC}}
\frac{1}{N_{\mathrm{ep}}}
\sum_{q=1}^{N_{\mathrm{ep}}}
\left[
\left|
u_\theta(T,x_q,y_q)
\right|^2
+
\left|
\partial_t u_\theta(T,x_q,y_q)
\right|^2
\right].
\end{aligned}
\]
We use
\[
\Lambda_{\mathrm{PDE}}=50,
\qquad
\Lambda_{\mathrm{BC}}
=
\Lambda_{\mathrm{IC}}
=
\Lambda_{\mathrm{TC}}
=
100.
\]
The total number of trainable parameters is \(6962\). 

\begin{remark}
Consistent with the weighting strategy employed in the Neural-Spectral method above, we assign heavier penalties to the data constraints (\(\Lambda_{\mathrm{BC}} = \Lambda_{\mathrm{IC}} = \Lambda_{\mathrm{TC}} = 100\)) compared to the interior residual (\(\Lambda_{\mathrm{PDE}} = 50\)). This ensures the model strictly respects the boundary, initial, and terminal data, steering the optimization process away from physically meaningless, trivial solutions.
\end{remark}

After training, the recovered PINN control \(f_\theta(t,x,y)\) is evaluated by forward validation. For this purpose, we project the applied control \(f_\theta\mathbf 1_{\omega_\delta}\) onto the retained sine basis:
\[
d_{j,k}(t)
=
\int_{\omega_\delta}
f_\theta(t,x,y)e_{j,k}(x,y)\,dx\,dy,
\qquad
j,k\in\{1,2,3,4\}.
\]
The validation dynamics are then
\[
\ddot a_{j,k}(t)+\mu_{j,k}a_{j,k}(t)+d_{j,k}(t)=0,
\qquad
j,k\in\{1,2,3,4\}.
\]
Thus, both methods are evaluated by inserting their recovered controls into the corresponding projected wave dynamics and measuring the terminal energy error.

\subsubsection*{Results}

The single-problem comparison is summarized in Table~\ref{tab:pinn_vs_spectral_2d}, and the error-versus-wall-time trajectories are shown in Figure~\ref{fig:pinn_vs_spectral_2d}.

\begin{table}[ht]
\centering
\begin{tabular}{l|cccc}
\hline
method
&
forward-simulation error in \(\mathcal E\)
&
\(\left\|\mathbf 1_{\omega_\delta}\widetilde f\right\|_{L^2((0,T)\times\Omega)}\)
&
\(n_{\mathrm{params}}\)
&
wall-time \\
\hline
Neural-Spectral (Method 1)
&
\(\mathbf{{4.84\times10^{-1}}}\)
&
\(11.92\)
&
\(6144\)
&
\(107\)\,s \\
PINN \((t,x,y)\) (Method 2)
&
\(2.78\times10^{0}\)
&
\(0.21\)
&
\(6962\)
&
\(71\)\,s \\
\hline
\end{tabular}
\caption{Single-problem two-dimensional comparison. For the standard soft-constrained PINN baseline used here, the terminal error remains close to the free-evolution baseline \(\|u^0\|_{\mathcal E}\approx2.83\). The Neural-Spectral method reduces the forward-simulation terminal error to \(4.84\times10^{-1}\) on this benchmark.}
\label{tab:pinn_vs_spectral_2d}
\end{table}

\begin{figure}[ht]
\centering
\includegraphics[width=0.97\textwidth]{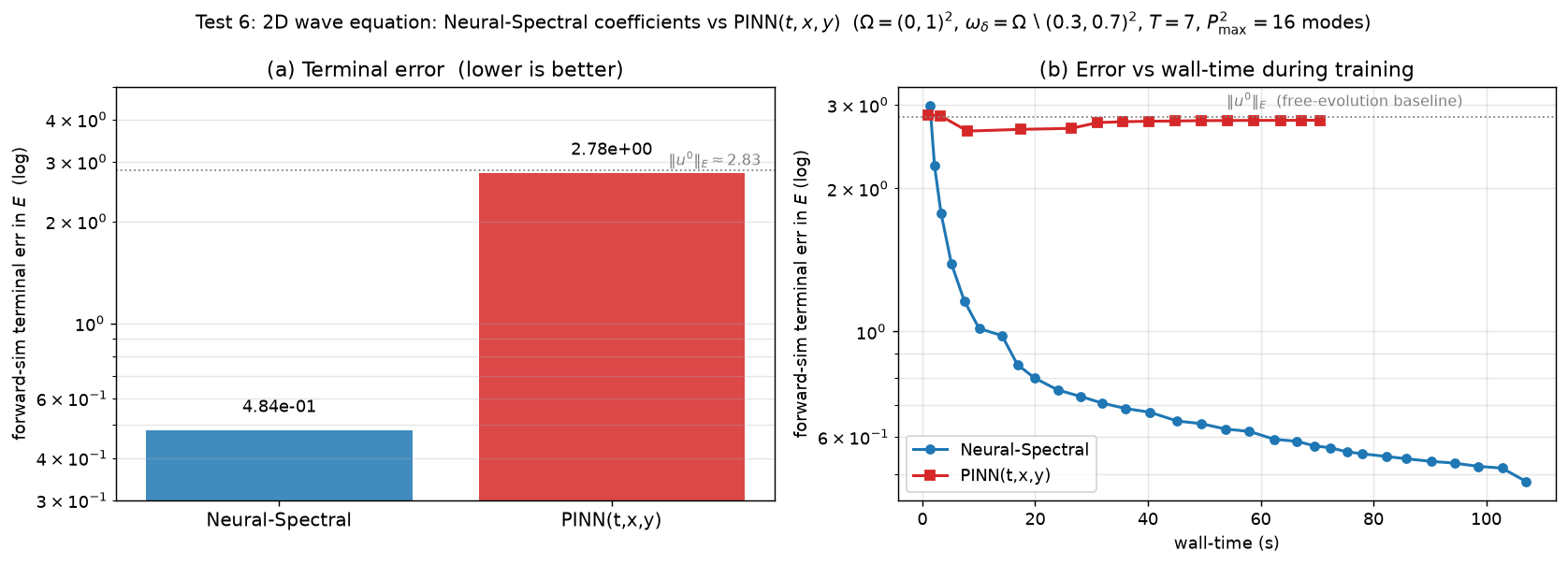}
\caption{Two-dimensional wave-equation comparison on the GCC-compatible boundary collar \(\omega_\delta=\Omega\setminus[0.3,0.7]^2\), with \(T=7\). \\
\emph{(a):} Final forward-simulation terminal error in \(\mathcal E\) for the two methods, shown on a logarithmic scale. The dotted line is the free-evolution baseline \(\|u^0\|_{\mathcal E}\approx2.83\). The Neural-Spectral method reaches \(4.84\times10^{-1}\), while the PINN remains close to the baseline. \\
\emph{(b):} Time-resolved checkpoints of the forward-simulation error during training. The Neural-Spectral error decreases steadily over the training run, whereas the PINN error remains close to the free-evolution baseline.}
\label{fig:pinn_vs_spectral_2d}
\end{figure}

\subsubsection*{What we found}\hfill \break
\indent\emph{(F1) The Neural-Spectral method is more accurate on this test.} It reduces the forward-simulation terminal error from the free-evolution baseline
\[
\left\|u^0\right\|_{\mathcal E}\approx2.83
\]
to
\[
4.84\times10^{-1}.
\]
The PINN ends at
\[
2.78,
\]
which is only a modest reduction from the free-evolution baseline. Thus, on this benchmark, the Neural-Spectral method improves the terminal error by a factor of approximately \(5.7\) relative to the PINN.

\emph{(F2) The standard PINN baseline returns a weak control in this test.} The recovered PINN control has applied \(L^2\)-norm \(0.21\), about \(57\) times smaller than that recovered by the Neural-Spectral method. The resulting learned pair behaves close to a free-evolution solution: it satisfies the wave equation, boundary condition, and initial condition reasonably well, but the recovered control is not strong enough to steer the state close to the terminal target.

\emph{(F3) The Neural-Spectral loss is aligned with the validated dynamics.} The Neural-Spectral method optimizes the projected residual of \eqref{eq:test6-spec-ode}, which is the same finite-dimensional dynamics used in the forward-validation step. Thus, for the retained modal system, a small projected residual together with a small endpoint mismatch is directly relevant to the reported forward-validation error. By contrast, the PINN baseline uses a soft multi-objective loss in the physical variables, where the PDE residual, boundary condition, initial condition, and terminal condition enter as competing penalty terms. The observed weak-control solution is consistent with known optimization difficulties for PINNs, including gradient imbalance among loss components \cite{WangTengPerdikaris2021}. We do not claim that all PINN variants would behave similarly; rather, this experiment shows that the standard soft-constrained PINN used here is less effective than the Neural-Spectral formulation on this benchmark.

\begin{remark}
Both methods are tested on the same controlled wave equation using a comparable number of trainable parameters. The Neural-Spectral method benefits from building the Dirichlet eigenbasis and the modal control structure into the approximation from the beginning. This reduces the optimization problem to the learning of time-dependent trajectories for finitely many modal coefficients. The PINN baseline, by contrast, learns the full space-time state and control fields directly from soft penalties. In this oscillatory benchmark, the difference in parameterization and loss alignment appears to be a significant factor in the observed accuracy gap.
\end{remark}

\subsection{3D wave equation: Neural-Spectral method vs. DeepONet}

The previous sections compared the Neural-Spectral method against either the classical Galerkin--Schauder HUM iteration in one-dimensional spatial domain or, in the two-dimensional case, a standard PINN. We now compare it against a different family of neural PDE solvers: \emph{operator-learning methods}, with DeepONet~\cite{Lu2021} as a representative example. The test is performed on the linear three-dimensional wave equation. The purpose is operational: we compare a per-problem solver, namely the Neural-Spectral method, with an operator-learning surrogate that is trained once and then reused. Thus, the two methods occupy different positions in the cost-accuracy trade-off. We first compare them on a single three-dimensional control problem, and then examine what changes when the same trained DeepONet is reused on a second related problem.

\subsubsection*{Problem}

We consider \eqref{Wave1} on the three-dimensional unit cube
\[
\Omega=(0,1)^3,
\]
with no nonlinearity, \(g\equiv0\), and with \(\alpha=0\). The initial
velocity is zero, \(u^1\equiv0\), and the target state is
\[
\left(u_T^0,u_T^1\right)=(0,0).
\]
The control acts on the boundary collar
\[
\omega_\delta
=
\left\{
(x,y,z)\in\Omega:
\min\{x,1-x,y,1-y,z,1-z\}<\delta
\right\},
\qquad
\delta=0.3,
\]
and we take \(T=5\). Equivalently,
\[
\omega_\delta
=
\Omega\setminus[\delta,1-\delta]^3.
\]
For \(\delta=0.3\), this gives
\[
|\omega_\delta|
=
1-(1-2\delta)^3
=
1-0.4^3
=
0.936,
\]
so the controlled domain occupies approximately \(93.6\%\) of the cube.

This choice is compatible with the Geometric Control Condition (GCC). Indeed, a unit-speed ray that has not yet entered \(\omega_\delta\) must remain inside the central cube \([\delta,1-\delta]^3\). If \(v=(v_1,v_2,v_3)\) is its direction, then \(\|v\|_{\mathbb{R}^3}=1\). As \(v_1^2+v_2^2+v_3^2 \le 3\max\limits_{1\le i\le3}|v_i|^2\), it holds that
\[
\max\limits_{1\le i\le3}|v_i|\ge \dfrac1{\sqrt3}.
\]
Moreover, the maximum distance the ray can travel along any coordinate axis while remaining inside the central square is exactly its side length, \(1-2\delta\). Therefore, the total time \(T_\delta\) the ray can spend inside the central square is strictly bounded by the time it takes to exit along the fastest coordinate direction, that is,
\[
T_\delta
\le \min\limits_{1\le i\le3}\frac{1-2\delta}{|v_i|} = 
\frac{1-2\delta}{\max\limits_{1\le i\le3}|v_i|}
\le
\sqrt3(1-2\delta).
\]
For \(\delta=0.3\), this gives
\[
T_\delta\le 0.4\sqrt3\approx0.693.
\]
Thus, the choice \(T=5\) is safely above this sufficient GCC time.

The \(L^2(\Omega)\)-orthonormal Dirichlet sine basis is
\[
e_{j,k,\ell}(x,y,z)
=
\left(\sqrt2\right)^3
\sin(j\pi x)\sin(k\pi y)\sin(\ell\pi z),
\qquad
j,k,\ell\in\mathbb N^*,
\]
with eigenvalues
\[
\mu_{j,k,\ell}
=
\left(j^2+k^2+\ell^2\right)\pi^2.
\]
We use the spectral truncation
\[
1\le j,k,\ell\le P_{\max}=3,
\]
which gives
\[
n_{\mathrm{mode}}=P_{\max}^3=27
\]
retained modes.

Let \(m=(j,k,\ell)\) and \(n=(j',k',\ell')\) denote multi-indices in \(\{1,2,3\}^3\). If the lifted control field is written as
\[
\widetilde f(t,x,y,z)
=
\sum_n c_n(t)e_n(x,y,z),
\]
then the actual applied control is
\[
\mathbf 1_{\omega_\delta}\widetilde f.
\]
The projected dynamics are
\begin{equation}\label{eq:test8-spec-ode}
\ddot a_m(t)+\mu_m a_m(t)+\sum_n M_{mn}c_n(t)=0,
\qquad m\in\{1,2,3\}^3,
\end{equation}
where
\[
M_{mn}
=
\int_{\omega_\delta}
e_m(x,y,z)e_n(x,y,z)\,dx\,dy\,dz.
\]
For this actuator, the overlap matrix \(M\) has eigenvalues in
\[
[0.125,0.999]
\]
and condition number approximately \(8.0\), so the retained finite-dimensional control problem is well-conditioned. We use two test problems, each with nonzero spectral content in four low modes:
\[
\text{Test \#1:}\quad \left\|u^0\right\|_{\mathcal E}=1.15,
\qquad
\text{Test \#2:}\quad \left\|u^0\right\|_{\mathcal E}=1.06.
\]

\subsubsection*{Method 1: Neural-Spectral method}

For each retained spectral mode \(m\), we represent both the state coefficient \(a_m(t)\) and the lifted-control coefficient \(c_m(t)\) by shallow \(\tanh\) networks of width \(64\). The parameters \(\theta_a\) and \(\theta_c\) are trained by minimizing the projected residual loss
\[
\begin{aligned}
\mathcal L_{\mathrm{spec}}(\theta_a,\theta_c)
={}&
\int_0^T
\sum_{j=1}^{3}\sum_{k=1}^{3}\sum_{\ell=1}^{3}
\left|
\ddot a_{j,k,\ell}(t;\theta_a)
+
\left(j^2+k^2+\ell^2\right)\pi^2 a_{j,k,\ell}(t;\theta_a)
\right.\\
&\hspace{4.5cm}\left.
+
\sum_{j'=1}^{3}\sum_{k'=1}^{3}\sum_{\ell'=1}^{3}
M_{(j,k,\ell),(j',k',\ell')}
c_{j',k',\ell'}(t;\theta_c)
\right|^2
\,dt
\\
&+
\Lambda
\sum_{j=1}^{3}\sum_{k=1}^{3}\sum_{\ell=1}^{3}
\left[
\left(j^2+k^2+\ell^2\right)\pi^2
\left|a_{j,k,\ell}(0;\theta_a)-u^0_{j,k,\ell}\right|^2
+
\left|\dot a_{j,k,\ell}(0;\theta_a)\right|^2
\right]
\\
&+
\Lambda
\sum_{j=1}^{3}\sum_{k=1}^{3}\sum_{\ell=1}^{3}
\left[
\left(j^2+k^2+\ell^2\right)\pi^2
\left|a_{j,k,\ell}(T;\theta_a)\right|^2
+
\left|\dot a_{j,k,\ell}(T;\theta_a)\right|^2
\right],
\end{aligned}
\]
where the initial and terminal mismatch terms are measured in the energy
norm and \(\Lambda=100\). We use \(250\) time-collocation points,
\(2000\) Adam steps, followed by \(18\) blocks of \(250\) L-BFGS
iterations. The total number of trainable parameters is \(10\,368\). This
method is trained from scratch for each test problem.

\subsubsection*{Method 2: DeepONet forward surrogate and gradient-descent control}

The DeepONet is trained to approximate the finite-dimensional control-to-terminal-state map
\[
\mathcal G:
c
\longmapsto
\left(u(T,\cdot),u_t(T,\cdot)\right),
\]
where \(c\) denotes the vector of piecewise-linear modal control coefficients and the initial data are zero. Since the wave equation is linear in this experiment, the terminal state for arbitrary initial data decomposes as
\[
(u(T,\cdot),u_t(T,\cdot))
=
\left(u_{\mathrm{free}}(T,\cdot),v_{\mathrm{free}}(T,\cdot)\right)
+
\mathcal G(c).
\]
Thus, the same trained DeepONet can be reused for different initial and target states on the same domain. No HUM solver is used to generate the training labels; all labels are obtained from forward simulations of random controls.

\paragraph{\textbf{Control parameterization.}}
The modal control is parameterized as a piecewise-linear interpolant on
\[
N_{t,\mathrm{sub}}=11
\]
time knots, with one curve per retained spectral mode. Thus, the branch input is
\[
b
=
\left(c_{j,k,\ell}(t_n)\right)_{
1\le j,k,\ell\le3,\;0\le n\le10}
\in\mathbb R^{297}.
\]

\paragraph{\textbf{Network architecture.}}
We use a two-channel DeepONet of the form
\[
\mathcal G_\theta(c)(x,y,z)
=
\left(
\beta_\theta(c)\cdot \tau_\theta^u(x,y,z),
\,
\beta_\theta(c)\cdot \tau_\theta^v(x,y,z)
\right).
\]
The branch network
\[
\beta_\theta:\mathbb R^{297}\to\mathbb R^q
\]
is a four-layer \(\tanh\) MLP of width \(128\), with \(q=160\). The trunk network
\[
\tau_\theta:\mathbb R^3\to\mathbb R^{2q}
\]
is a five-layer \(\tanh\) MLP of width \(128\). The total number of trainable parameters is \(183\,138\), about \(18\) times that of the Neural-Spectral method.

\paragraph{\textbf{Offline training phase.}}
We generate \(1500\) random control samples by drawing
\[
c_{j,k,\ell}(t_n)\sim\mathcal N\left(0,0.8^2\right)
\]
independently for each retained mode and time knot. Each sampled control is forward-simulated from zero initial data under \eqref{eq:test8-spec-ode} to obtain
\[
\left(a_m(T),\dot a_m(T)\right).
\]
The terminal fields
\[
u(T,\cdot),
\qquad
u_t(T,\cdot)
\]
are then reconstructed by spectral expansion at \(100\) random query points in \(\Omega\). Since the forcing is piecewise linear in time, the forced linear oscillator response can be computed in closed form; no ODE solver is needed for this data-generation step. Generating all \(1500\) training samples takes approximately \(0.4\) seconds.

The DeepONet is trained by minimizing the mean-squared error of the two terminal channels at the query points. The training schedule consists of \(10\,000\) Adam steps with batch size \(128\), initial learning rate \(2\cdot10^{-3}\), and decay factor \(0.5\) every \(1500\) steps. The offline training time is approximately \(17\) seconds.

\paragraph{\textbf{Online inversion phase.}}
Given a test problem
\[
\left(u^0,u^1,u_T^0,u_T^1\right),
\]
the free-evolution terminal state is computed in closed form and evaluated on a fixed \(9^3=729\)-point grid in \(\Omega\). The control parameter vector
\[
c\in\mathbb R^{297}
\]
is initialized at zero and optimized by \(1500\) Adam steps with initial learning rate \(0.05\), with learning-rate decay. The loss is the mean-squared mismatch between \(\mathcal G_\theta(c)\) and the required terminal offset on the evaluation grid. During this phase, the DeepONet weights are frozen, and gradients are taken only with respect to the control vector \(c\). The online inversion time per test problem is approximately \(6\)--\(8\) seconds.

\paragraph{\textbf{Forward validation.}}
For both methods, the recovered control is inserted into the true truncated modal system \eqref{eq:test8-spec-ode}, starting from the actual initial data. The system is then integrated with DOP853 to a relative tolerance \(10^{-12}\). We report the model-independent terminal error
\[
\left\|\Phi(T,\cdot)-\Phi_T\right\|_{\mathcal E}
=
\left(
\sum_m
\mu_m
\left(a_m(T)-u^0_{T,m}\right)^2
+
\left(\dot a_m(T)-u^1_{T,m}\right)^2
\right)^{1/2}.
\]
This is the same forward-validation metric used in the previous numerical tests.

\subsubsection*{Main result: single-problem accuracy}

The main comparison is summarized in Table~\ref{tab:deeponet_vs_spectral_3d} and in Figure~\ref{fig:deeponet_vs_spectral_3d}.

\begin{table}[ht]
\centering
\begin{tabular}{l|ccc}
\hline
method
&
forward-simulation error in \(\mathcal E\)
&
\(\left\|\mathbf 1_{\omega_\delta}\widetilde f\right\|_{L^2((0,T)\times\Omega)}\)
&
wall-time \\
\hline
Neural-Spectral (Method 1)
&
\(\mathbf{4.39\times10^{-1}}\)
&
\(7.67\)
&
\(81\)\,s \\
DeepONet (Method 2)
&
\(1.36\times10^{0}\)
&
\(8.44\)
&
\(23\)\,s \\
\hline
\end{tabular}
\caption{Single-problem comparison for Test~\#1. The forward-simulation terminal error of the Neural-Spectral method is smaller than that of the DeepONet by a factor of approximately \(3.1\). The two methods recover the controls of comparable applied \(L^2\)-amplitude.}
\label{tab:deeponet_vs_spectral_3d}
\end{table}

On Test~\#1, the Neural-Spectral method gives a terminal energy-norm error
\[
4.39\times10^{-1},
\]
whereas DeepONet gives
\[
1.36\times10^{0}.
\]
Thus, on this single problem, the Neural-Spectral method gives the smaller forward-validation terminal error, whereas DeepONet is faster in online wall time because part of the computation has been shifted into an offline-trained forward surrogate.

One possible source of the DeepONet error in this test is surrogate accuracy in the region of control space reached by the online inversion. During the online phase, the surrogate mismatch
\[
\|\mathcal G_\theta(c)-\text{target offset}\|^2
\]
on the evaluation grid is reduced to approximately \(10^{-2}\). However, when the resulting control is inserted into the true projected wave dynamics \eqref{eq:test8-spec-ode}, the forward-validation terminal error is \(1.36\times10^{0}\). This indicates that the optimized control is effective for the learned surrogate but less effective for the true finite-dimensional dynamics in this experiment. Such behavior is a known concern when learned forward operators are used inside inverse or control loops: the inversion step may move toward regions of input space where the surrogate is less accurate than its average training error suggests.

The Neural-Spectral method avoids this particular surrogate-error mechanism because it optimizes the residual of the same projected dynamics used in the forward-validation step. Its remaining errors come from finite spectral truncation, neural parameterization, numerical quadrature, and nonconvex optimization.

\subsubsection*{Additional observation: amortization across multiple problems}

The above comparison treats the two methods on the same footing: each is asked to solve a single control problem. The natural regime for operator-learning methods, however, is the many-query setting, where the same trained surrogate is reused across multiple problems on the same domain. We therefore ran a second test, Test~\#2, with different initial data satisfying
\[
\left\|u^0\right\|_{\mathcal E}=1.06.
\]
The initial data places mass on the cross modes
\[
(2,2,1),\qquad (1,2,2),\qquad (3,1,1),
\]
as well as on \((1,1,1)\).

\begin{table}[ht]
\centering
\begin{tabular}{l|cc|cc}
\hline
&
\multicolumn{2}{c|}{Test \#1}
&
\multicolumn{2}{c}{Test \#2} \\
method
&
fwd-err
&
wall-time
&
fwd-err
&
wall-time \\
\hline
Neural-Spectral (Method 1)
&
\(\mathbf{4.39\times10^{-1}}\)
&
\(81\)\,s
&
\(\mathbf{2.59\times10^{-1}}\)
&
\(291\)\,s \;\text{\footnotesize(full retrain)} \\
DeepONet (Method 2)
&
\(1.36\times10^{0}\)
&
\(23\)\,s \;\text{\footnotesize(incl.\ offline)}
&
\(1.22\times10^{0}\)
&
\(13\)\,s \;\text{\footnotesize(inference only)} \\
\hline
\multicolumn{1}{c|}{\emph{cumulative wall time over both tests}}
&
\multicolumn{2}{c|}{Neural-Spectral: \(372\)\,s}
&
\multicolumn{2}{c}{DeepONet: \(36\)\,s} \\
\hline
\end{tabular}
\caption{Additional amortization test. On Test~\#2, the trained DeepONet is reused, so only the online inversion cost is incurred, whereas the Neural-Spectral method is retrained from scratch. The cumulative wall time of DeepONet over the two tests is smaller than that of the Neural-Spectral method, but the Neural-Spectral method gives the smaller forward-validation terminal error on each individual test.}
\label{tab:deeponet_vs_spectral_3d_amortise}
\end{table}
\begin{figure}[ht]
\centering
\includegraphics[width=0.97\textwidth]{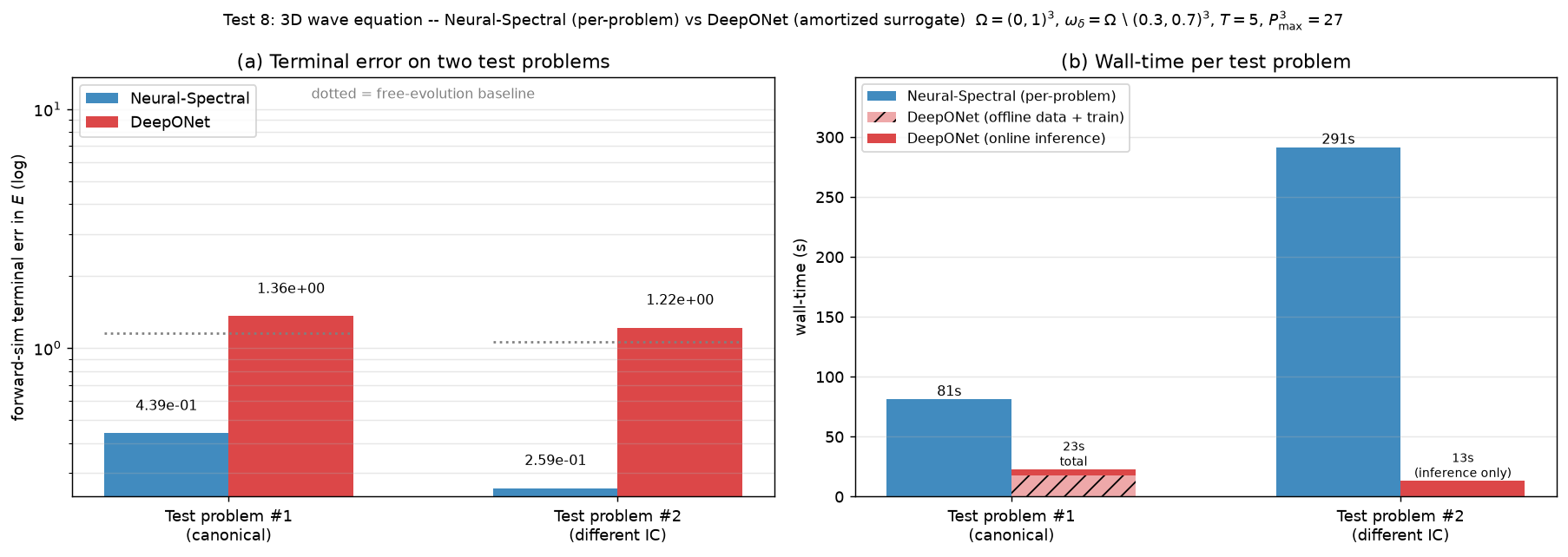}
\caption{3D wave-equation comparison: Neural-Spectral vs. DeepONet on the GCC-compatible boundary collar \(\omega_\delta=\Omega\setminus[0.3,0.7]^3\), with \(T=5\). \\
\emph{(a):} Forward-simulation terminal error in \(\mathcal E\) on the two test problems, shown on a logarithmic scale. On both problems, the Neural-Spectral method is more accurate: by a factor of about \(3.1\) on Test~\#1 and about \(4.7\) on Test~\#2. The dotted lines mark the corresponding free-evolution baselines \(\left\|u^0\right\|_{\mathcal E}\). \\
\emph{(b):} Wall-time breakdown. The DeepONet's one-time offline cost (data generation plus training, hatched) is incurred only on the first problem; subsequent problems incur only the online inversion cost. The Neural-Spectral method retrains from scratch for each problem.}
\label{fig:deeponet_vs_spectral_3d}
\end{figure}

With two test problems, the Neural-Spectral method gives the smaller forward-validation terminal error on each individual problem, while DeepONet has the smaller cumulative wall time because its offline training cost is amortized. In general, if \(W_{\mathrm{spec}}\) denotes the average wall time of a single Neural-Spectral solve, \(W_{\mathrm{off}}\) the one-time DeepONet offline training cost, and \(W_{\mathrm{inf}}\) the DeepONet online inference cost per new problem, then the two cumulative costs over \(N\) related problems are approximately
\[
N W_{\mathrm{spec}}
\qquad\text{and}\qquad
W_{\mathrm{off}}+N W_{\mathrm{inf}}.
\]
Thus, DeepONet becomes faster when
\[
N>
\frac{W_{\mathrm{off}}}{W_{\mathrm{spec}}-W_{\mathrm{inf}}}.
\]
With the present timings, this threshold is below one problem, so DeepONet is already faster in wall time on the first test. The observed trade-off in these experiments is therefore between per-instance forward-validation accuracy and amortized online speed.

\begin{remark}
The DeepONet accuracy could likely be improved by spending more compute in the offline phase, for instance, through a larger training set, a richer architecture, or an iterative-refinement loop in which the surrogate is fine-tuned near the controls returned by the inversion step. These modifications would increase the offline cost and may change the quantitative comparison. The present experiments, therefore, should be viewed as a comparison between one representative DeepONet surrogate and the proposed per-instance Neural-Spectral solver, rather than as a definitive comparison between spectral control reconstruction and operator learning in general. In the reported tests, the Neural-Spectral method gives terminal errors of order \(10^{-1}\), while DeepONet provides faster online inversion after offline training.
\end{remark}

\subsection{A two-dimensional linear Gramian reference benchmark}

In this section, we include a linear reference benchmark for which the finite-dimensional Galerkin control can be computed explicitly through a controllability Gramian. The purpose of this test is different from the previous PINN and DeepONet comparisons. Here, we compare the Neural-Spectral reconstruction with a classical \emph{Gramian/HUM reference control} for the same projected linear dynamics. This provides a \emph{non-machine-learning} benchmark against which the accuracy of the recovered control can be assessed.

We consider the linear wave equation
\begin{equation}
\label{eq:linear_2d_wave_test}
\begin{cases}
u_{tt}-\Delta u+f\,\mathbf 1_{\omega_\delta}=0,
& (t,x,y)\in (0,T)\times \Omega,\\
u=0,
& (t,x,y)\in (0,T)\times \partial\Omega,\\
u(0,x,y)=u^0(x,y),\qquad u_t(0,x,y)=u^1(x,y),
& (x,y)\in \Omega,
\end{cases}
\end{equation}
where
\[
\Omega=(0,1)^2,\qquad g\equiv 0,\qquad \alpha=0.
\]
The target is the null state
\[
u(T,\cdot)=0,\qquad u_t(T,\cdot)=0.
\]
We choose the boundary-collar actuator
\begin{equation*}
\omega_\delta
=
\Omega\setminus [\delta,1-\delta]^2
=
\{(x,y)\in \Omega:\min(x,1-x,y,1-y)<\delta\},
\end{equation*}
with
\[
\delta=0.2,\qquad T=4.
\]
The uncontrolled region is the central square
\[
[\delta,1-\delta]^2=[0.2,0.8]^2.
\]
For this collar geometry, any geometric ray traveling at unit speed that has not yet entered the control region must remain inside the central uncontrolled square. Thus, the distance such a ray can cover before hitting the collar is bounded by the diameter of this square, namely \((1-2\delta)\sqrt{2}\). Thus, the time before the unit-speed ray reaches the collar is also bounded by
\[
(1-2\delta)\sqrt 2=0.6\sqrt 2\approx 0.849.
\]
Since \(T=4\), the pair \((\omega_\delta,T)\) satisfies the Geometric Control Condition (GCC) with a large safety margin. This elementary bound is used only to justify the benchmark geometry; no claim of sharpness is needed here.

For the initial condition, we take a finite superposition of Dirichlet modes:
\begin{equation*}
u^0(x,y)
=
E_{1,2}(x,y)
+
0.7E_{3,1}(x,y)
+
0.4E_{2,3}(x,y),
\qquad
u^1(x,y)=0,
\end{equation*}
where
\[
E_{m,n}(x,y)=2\sin(m\pi x)\sin(n\pi y)
\]
is the \(L^2(\Omega)\)-normalized Dirichlet eigenfunction associated with the eigenvalue
\[
\mu_{m,n}=\pi^2\left(m^2+n^2\right).
\]
All three active modes are already included at the smallest truncation \(P=8\). Indeed, the largest active eigenvalue is \(\mu_{2,3}=13\pi^2\), and the first eight Dirichlet modes on the square contain all modes with \(m^2+n^2\le 13\). Hence, the initial data is represented exactly at each truncation level \(P\in\{8,16,32\}\) used below, and the projected initial energy norm \(\|Y_0\|_{\mathcal E_P}\approx 10.87\) is independent of \(P\).

Using the spectral re-indexing bijection \(q: \mathbb{N}^* \to \mathbb{N}^* \times \mathbb{N}^*\) established in the proof of Lemma ~\ref{lem:reindexing} (see Appendix~\ref{app:proof_lemma_2}), we map the two-dimensional eigenvalues \(\mu_{m,n}\) in ascending order. Let \(e_j(x,y) := E_{q(j)}(x,y)\) denote the corresponding sorted sequence of the first \(P\) Dirichlet eigenfunctions. We then write the Galerkin approximation as
\[
u_P(t,x,y)=\sum_{j=1}^P a_j(t)e_j(x,y),
\qquad
f_P(t,x,y)=\sum_{j=1}^P c_j(t)e_j(x,y).
\]
Projecting \eqref{eq:linear_2d_wave_test} onto \(\operatorname{span}\{e_1,\ldots,e_P\}\) gives
\begin{equation*}
a_j''(t)+\mu_j a_j(t)
+
\sum_{\ell=1}^P M_{j\ell}c_\ell(t)=0,
\qquad j=1,\ldots,P,
\end{equation*}
where
\begin{equation*}
M_{j\ell}
=
\int_{\omega_\delta} e_j(x,y)e_\ell(x,y)\,dx\,dy.
\end{equation*}
In vector form, with
\[
a(t)=(a_1(t),\ldots,a_P(t))^T,\qquad
c(t)=(c_1(t),\ldots,c_P(t))^T,
\]
and
\[
\Lambda_P=\operatorname{diag}(\mu_1,\ldots,\mu_P),
\qquad
M_P=\left(M_{j\ell}\right)_{j,\ell=1}^P,
\]
the projected system is
\begin{equation*}
a''(t)+\Lambda_P a(t)+M_Pc(t)=0.
\end{equation*}
Equivalently, setting
\[
Y(t)=
\begin{pmatrix}
a(t)\\
a'(t)
\end{pmatrix},
\]
we obtain the first-order controlled system
\begin{equation}
\label{eq:linear_2d_first_order_system}
Y'(t)=A_PY(t)+B_Pc(t),
\end{equation}
where
\[
A_P=
\begin{pmatrix}
0&I_P\\
-\Lambda_P&0
\end{pmatrix},
\qquad
B_P=
\begin{pmatrix}
0\\
-M_P
\end{pmatrix}.
\]
For this boundary-collar actuator, \(M_P\) is positive definite. Indeed, for any nonzero vector \(\xi\in\mathbb R^P\),
\[
\xi^TM_P\xi
=
\int_{\omega_\delta}
\left|\sum_{j=1}^P \xi_j e_j(x,y)\right|^2\,dx\,dy>0,
\]
because a nontrivial finite linear combination of Dirichlet eigenfunctions cannot vanish identically on the open set \(\omega_\delta\). Therefore, \(M_P\) is invertible. Consequently,
\[
\operatorname{rank}\,[B_P,A_PB_P]=2P,
\]
and by the Kalman's controllability rank condition (see, e.g., \cite[Section~9.6]{Ogata2010}), the finite-dimensional Galerkin system is controllable.

We compute \(M_P\) in closed form by exploiting orthonormality on the full square. Since \(\displaystyle\int_\Omega e_je_\ell = \delta_{j\ell}\) and \(\omega_\delta\) is the complement of the tensor-product square \([\delta,1-\delta]^2\),
\[
M_P = I_P - O_x\odot O_y,
\qquad
(O_x)_{j\ell}
=
\int_\delta^{1-\delta}
2\sin(m_j\pi x)\sin(m_\ell\pi x)\,dx,
\]
and similarly for \(O_y\) in the \(n\)-indices, with \(\odot\) denoting entry-wise multiplication. Numerically, the smallest eigenvalue of \(M_P\) decreases as \(P\) grows, reflecting the weaker interaction of some high modes with the collar. For \(P=8,16,32\), we find
\[
\lambda_{\min}(M_P)
\approx
2.5\times 10^{-2},\quad
3.5\times 10^{-3},\quad
6.2\times 10^{-5},
\]
respectively. Thus, \(M_P\) remains positive definite over the truncation levels used in this benchmark, although the increasing ill-conditioning is visible.

The finite-dimensional Gramian reference control is computed with respect to the modal coefficient norm
\[
\int_0^T |c(t)|_{\mathbb R^P}^2\,dt.
\]
Let
\[
Y_0=
\begin{pmatrix}
a(0)\\
a'(0)
\end{pmatrix},
\qquad
Y_T=0,
\]
and define
\begin{equation}
\label{eq:linear_2d_gramian}
W_T
=
\int_0^T
e^{A_P(T-s)}B_PB_P^T e^{A_P^T(T-s)}\,ds.
\end{equation}
Since the pair \((A_P,B_P)\) is controllable, \(W_T\) is symmetric positive definite. The corresponding minimum-energy control steering \(Y_0\) to
\(0\) is
\begin{equation}
\label{eq:linear_2d_hum_control}
c_P^{\textrm{HUM}}(t)
=
B_P^T e^{A_P^T(T-t)}W_T^{-1}
\left(
-Y_{\textrm{free}}(T)
\right),
\qquad
Y_{\textrm{free}}(T)=e^{A_PT}Y_0.
\end{equation}
Equivalently,
\[
Y(T)
=
e^{A_PT}Y_0
+
\int_0^T e^{A_P(T-s)}B_Pc_P^{\textrm{HUM}}(s)\,ds
=
0.
\]
In our implementation, \(W_T\) is assembled by Simpson quadrature applied to \eqref{eq:linear_2d_gramian}, using matrix exponentials for the homogeneous flow. The computed reference control \eqref{eq:linear_2d_hum_control} steers the Galerkin system to the origin to within
\[
\left\|Y^{\textrm{HUM}}(T)\right\|_{\mathcal E_P}\sim 10^{-8},
\]
so it provides a numerically accurate reference for the projected system.

\paragraph{\textbf{Neural-Spectral control.}}
We compare this reference with the Neural-Spectral control \(c_P^{\textrm{NS}}\). For the finite-dimensional linear system, the formula \eqref{eq:linear_2d_hum_control} implies that each component of \(c_P^{\textrm{HUM}}\) is a finite linear combination of temporal harmonics
\[
\cos(\sqrt{\mu_k}\,t),
\qquad
\sin(\sqrt{\mu_k}\,t),
\qquad k=1,\ldots,P.
\]
This motivates the following frequency-informed temporal representation:
\begin{equation}
\label{eq:linear_2d_ns_ansatz}
c_{\theta,j}(t)
=
\Theta_{j,0}
+
\sum_{k=1}^P
\Bigl[
\Theta_{j,2k-1}\cos(\sqrt{\mu_k}\,t)
+
\Theta_{j,2k}\sin(\sqrt{\mu_k}\,t)
\Bigr],
\end{equation}
with learnable weights \(\Theta\in\mathbb R^{P\times(2P+1)}\). This dictionary should be viewed as a frequency-informed specialization of the Neural-Spectral temporal representation for this linear benchmark: the modal spatial representation and the reduced control objective are the same, but the generic trainable temporal network is replaced by a fixed feature map matched to the known wave frequencies.

Unlike the fully unconstrained Neural-Spectral formulation in previous sections (where the PDE residual is penalized as a soft constraint), here we employ a \emph{hard-constrained} approach. Given \(c_\theta\), the state \(Y_\theta\) is propagated from \(Y_0\) through \eqref{eq:linear_2d_first_order_system} using the same matrix-exponential discretization used for the reference computation. Thus, the projected dynamics and the initial condition are enforced through the finite-dimensional propagation. The control is therefore obtained by solely minimizing the reduced objective
\begin{equation}
\label{eq:linear_2d_ns_loss}
\mathcal L_{\textrm{NS}}(\theta)
=
\bigl\|Y_\theta(T)\bigr\|_{\mathcal E_P}^2
+
\lambda_{\textrm{reg}}\,\|c_\theta\|_{L^2(0,T;\mathbb R^P)}^2 ,
\qquad
\lambda_{\textrm{reg}}=3\times 10^{-4}.
\end{equation}
The first term penalizes the terminal mismatch, while the second term biases the optimization toward lower-energy controls. For small terminal error and small regularization parameter, this penalized formulation approximates the constrained minimum-energy Galerkin control problem solved explicitly by the Gramian formula. Optimization uses Adam followed by L-BFGS. After training, the recovered control is validated by solving \eqref{eq:linear_2d_first_order_system} forward with \(c_P^{\textrm{NS}}\) using a high-order adaptive integrator.

We report the forward-validation terminal error
\[
\left\|Y^{\textrm{NS}}(T)\right\|_{\mathcal E_P}
=
\left(
\sum_{j=1}^P \mu_j \left|a_j^{\textrm{NS}}(T)\right|^2
+
\sum_{j=1}^P \left|\left(a_j^{\textrm{NS}}\right)'(T)\right|^2
\right)^{1/2},
\]
the distance to the finite-dimensional HUM reference control,
\[
\left\|c_P^{\textrm{NS}}-c_P^{\textrm{HUM}}\right\|_{L^2(0,T;\mathbb R^P)},
\]
and the relative control norm
\[
\dfrac{\left\|c_P^{\textrm{NS}}\right\|_{L^2(0,T;\mathbb R^P)}}
{\left\|c_P^{\textrm{HUM}}\right\|_{L^2(0,T;\mathbb R^P)}}.
\]

\begin{table}[ht]
\centering
\begin{tabular}{c|cccc}
\hline
\(P\)
&
\(\left\|Y^{\textrm{NS}}(T)\right\|_{\mathcal E_P}\)
&
\(\left\|c_P^{\textrm{NS}}-c_P^{\textrm{HUM}}\right\|_{L^2}\)
&
\(\left\|c_P^{\textrm{NS}}\right\|_{L^2}/\left\|c_P^{\textrm{HUM}}\right\|_{L^2}\)
&
wall-time
\\
\hline
\(8\)  & \(1.14\times 10^{-2}\) & \(0.014\) & \(1.0004\) & \(51\)\,s \\
\(16\) & \(1.17\times 10^{-2}\) & \(0.078\) & \(1.0008\) & \(65\)\,s \\
\(32\) & \(7.09\times 10^{-3}\) & \(0.077\) & \(1.0004\) & \(97\)\,s \\
\hline
\end{tabular}
\caption{Two-dimensional linear Gramian reference benchmark. The Neural-Spectral control is compared with the Gramian/HUM minimum-energy control of the same finite-dimensional Galerkin system, where the energy is measured in the modal coefficient norm. The relative control norm is within \(0.08\%\) of unity for the reported truncation levels, and the forward-validation terminal error is of order \(10^{-2}\). The \(L^2\) distance to the reference control remains small on these truncation levels, despite the increasing ill-conditioning of \(M_P\).}
\label{tab:linear_2d_gramian_reference}
\end{table}

\begin{figure}[ht]
\centering
\includegraphics[width=0.99\textwidth]{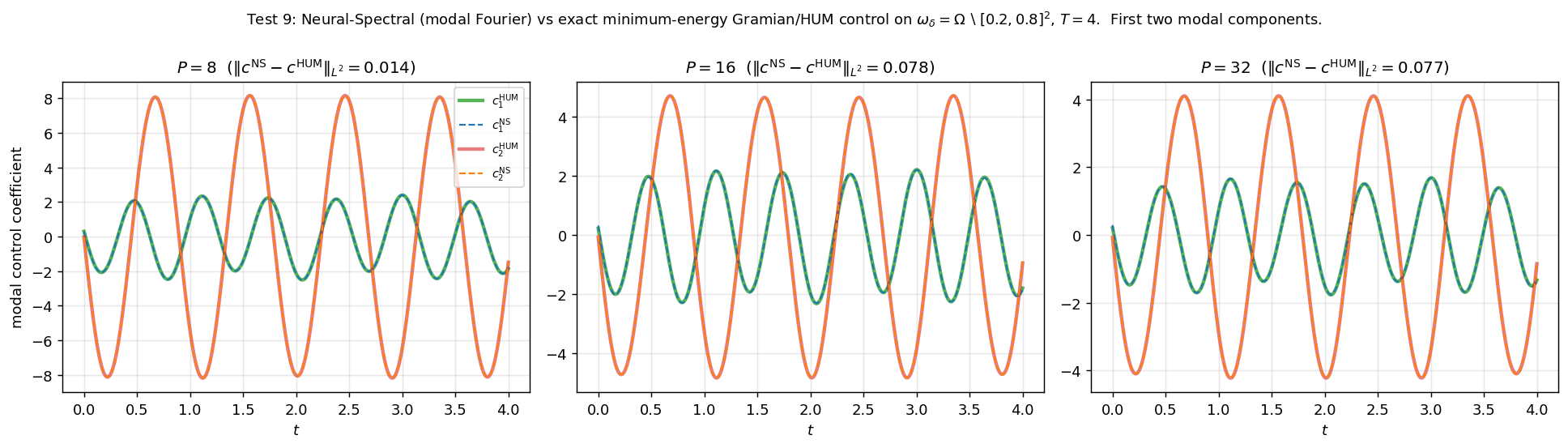}
\caption{First two modal control components for the Gramian reference benchmark at \(P=8,16,32\). The Neural-Spectral controls (dashed) and the Gramian/HUM reference controls (solid) nearly overlap at the plotted scale. The corresponding \(L^2\) control distances are \(0.014\), \(0.078\), and \(0.077\), respectively.}
\label{fig:linear_2d_gramian_reference}
\end{figure}

\paragraph{\textbf{Discussion.}}
The results in Table~\ref{tab:linear_2d_gramian_reference} and Figure~\ref{fig:linear_2d_gramian_reference} show that the frequency-informed Neural-Spectral reconstruction is close to the classical finite-dimensional Gramian/HUM control in this linear reference benchmark. The relative control norm matches that of \(c_P^{\textrm{HUM}}\) to within roughly one part in a thousand across the reported truncation levels, and the modal control trajectories are visually close to the Gramian reference. The forward-validated terminal error is reduced by approximately three orders of magnitude relative to the free evolution
\[
\|Y_0\|_{\mathcal E_P}\approx 10.87,
\]
which is independent of \(P\) because the initial data is captured exactly at every truncation level.

Two structural points should be noted. First, this benchmark is favorable to the modal Fourier ansatz \eqref{eq:linear_2d_ns_ansatz}. Since the finite-dimensional reference control is a linear combination of the temporal harmonics
\[
\left\{\cos\left(\sqrt{\mu_k}t\right),\sin\left(\sqrt{\mu_k}t\right):k=1,\ldots,P\right\},
\]
a feature map built from these frequencies has a small representation error for this linear problem. A generic time-network, by contrast, must learn the relevant oscillatory time scales during training. Second, the regularization in \eqref{eq:linear_2d_ns_loss} plays an important selection role. The projected system is controllable, so many admissible controls may steer \(Y_0\) to the origin. The regularized objective biases the optimization toward lower-energy controls, and in this benchmark, the resulting control is close to the Gramian/HUM reference.

Unlike the PINN and DeepONet comparisons, this benchmark does not depend on another learned solver. The comparison is made against the finite-dimensional Gramian/HUM control of the same projected linear dynamics. The close agreement with \(c_P^{\textrm{HUM}}\), together with a small forward-validation terminal error, provides a useful consistency check for the Neural-Spectral control formulation in a setting where a classical reference solution is available.

\paragraph{\textbf{Effect of the temporal ansatz.}}
The convergence study above uses the modal Fourier dictionary \eqref{eq:linear_2d_ns_ansatz} for the temporal representation, whereas the general Neural-Spectral formulation in this paper represents each modal coefficient with a shallow, trainable time-network. To quantify the effect of using frequency-informed temporal features, we repeat the benchmark at fixed \(P=16\) with both ansatzes. Both variants are trained with the same reduced objective \eqref{eq:linear_2d_ns_loss}, the same optimizer schedule, and the same finite-dimensional state propagation. For the tanh ansatz, each modal control coefficient is represented as
\[
c_{\theta,j}(t)
=
\sum_{r=1}^{m}
A_{j,r}\tanh\bigl(W_{j,r}\,t+B_{j,r}\bigr),
\qquad m=64,
\]
with all of \(A,W,B\) trained. The results are reported in
Table~\ref{tab:linear_2d_ansatz_comparison} and
Figure~\ref{fig:linear_2d_ansatz_comparison}, where the relative control
error is
\[
\dfrac{\left\|c_P^{\textrm{NS}}-c_P^{\textrm{HUM}}\right\|_{L^2}}
{\left\|c_P^{\textrm{HUM}}\right\|_{L^2}}.
\]

\begin{table}[ht]
\centering
\begin{tabular}{l|ccc}
\hline
method & terminal error & relative control error & wall-time\\
\hline
Neural-Spectral, tanh time networks & \(7.94\times 10^{-3}\) & \(1.185\) & \(132\)\,s \\
Neural-Spectral, modal Fourier features & \(1.17\times 10^{-2}\) & \(5.58\times 10^{-3}\) & \(94\)\,s \\
Gramian/HUM reference & \(\sim 10^{-8}\) & \(0\) & -- \\
\hline
\end{tabular}
\caption{Effect of the temporal ansatz at \(P=16\). Both Neural-Spectral variants reach comparable forward-validation terminal errors. The modal Fourier ansatz is much closer to the minimum-energy Gramian/HUM reference in relative control error, while using fewer trainable parameters, \(528\) versus \(3072\).}
\label{tab:linear_2d_ansatz_comparison}
\end{table}

\begin{figure}[ht]
\centering
\includegraphics[width=0.95\textwidth]{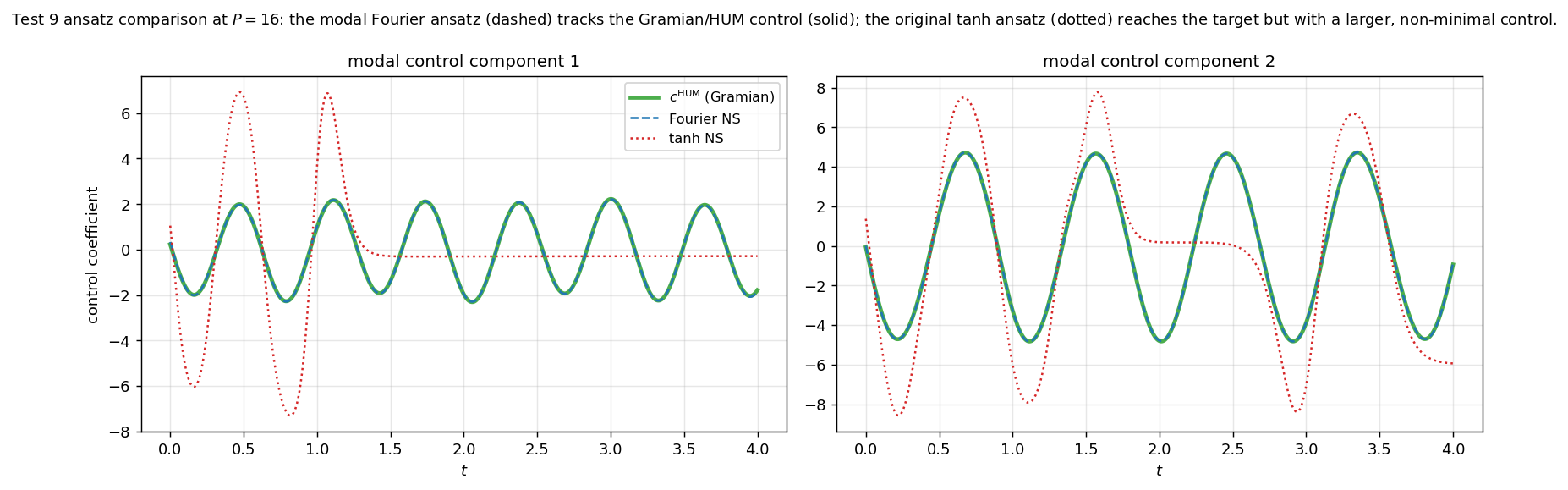}
\caption{Ansatz comparison at \(P=16\), first two modal control components. The modal Fourier control (dashed) nearly overlaps the Gramian/HUM control (solid) at the plotted scale. The tanh control (dotted) also steers the state close to the target, but follows a visibly different trajectory.}
\label{fig:linear_2d_ansatz_comparison}
\end{figure}

Both variants drive the projected state close to the target, as reflected by their comparable terminal errors. They differ, however, in the control selected by the optimization. In this experiment, the tanh time-network produces an admissible steering control whose trajectory differs substantially from the minimum-energy Gramian/HUM reference. The modal Fourier ansatz incorporates the frequencies \(\sqrt{\mu_k}\) appearing in the propagator \(e^{A_P^T(T-t)}\), and therefore has a much smaller representation error for the reference control in this linear setting. With the same reduced objective, the Fourier-feature version recovers a control much closer to \(c_P^{\textrm{HUM}}\), with a relative control error below \(10^{-2}\).

The Fourier dictionary should therefore be interpreted as a frequency-informed specialization of the Neural-Spectral strategy for the linear wave equation, where the modal time scales are known analytically. It uses the same modal representation of the dynamics and the same reduced control objective, but replaces the generic trainable tanh time-network by a fixed feature map matched to the spectral time scales of the problem.

\appendix\section{1D Re-indexing of Multi-dimensional Spectrum}\label{app:proof_lemma_2}   
    	As established in Lemma~\ref{lem:spectrum_A}, the spectrum of the operator
    	\(A\) is naturally indexed by pairs
    	\[
    	(k,\sigma)\in(\mathbb N^*)^N\times\{+,-\}.
    	\]
    	For the purposes of constructing neural operator approximations, it is convenient to replace this multi-index notation with a one-dimensional enumeration. This avoids notational and dimensional ambiguities when the spectral coefficients are represented as finite input and output vectors. The purpose of this appendix is to prove Lemma~\ref{lem:reindexing} by constructing such an explicit enumeration.
    	
    	We first enumerate \((\mathbb N^*)^N\). For each
    	\(k=(k_1,\dots,k_N)\in(\mathbb N^*)^N\), define
    	\[
    	s(k):=\sum_{j=1}^N k_j^2\in\mathbb N^*.
    	\]
    	For each \(m\in\mathbb N^*\), set
    	\[
    	S_m:=\left\{k\in(\mathbb N^*)^N:\ s(k)=m\right\}.
    	\]
    	Each set \(S_m\) is finite. Indeed, if \(k\in S_m\), then
    	\[
    	1\le k_j\le \sqrt m,
    	\qquad j=1,\ldots,N.
    	\]
    	Hence, each coordinate \(k_j\) can take at most \(\lfloor\sqrt m\rfloor\)
    	values, and therefore
    	\[
    	|S_m|\le \lfloor\sqrt m\rfloor^N<\infty.
    	\]
    	Moreover, since \(s(k)\in\mathbb N^*\) for every
    	\(k\in(\mathbb N^*)^N\), we have \(k\in S_{s(k)}\). Thus
    	\[
    	(\mathbb N^*)^N=\bigcup_{m=1}^\infty S_m.
    	\]
    	The union is disjoint, because if \(k\in S_m\cap S_\ell\), then
    	\(s(k)=m=\ell\).
    	
    	For each nonempty set \(S_m\), we order its elements lexicographically and
    	write
    	\[
    	S_m=\left\{k_1^m,k_2^m,\ldots,k_{|S_m|}^m\right\}.
    	\]
    	Define
    	\[
    	n_0:=0,
    	\qquad
    	n_m:=\sum_{\ell=1}^m |S_\ell|,
    	\qquad m\ge1.
    	\]
    	Then \((n_m)_{m\ge0}\) is nondecreasing, and \(n_m\to\infty\) as
    	\(m\to\infty\). To see this, for each \(r\in\mathbb N^*\), consider
    	\[
    	k^r:=(r,1,\ldots,1)\in(\mathbb N^*)^N.
    	\]
    	These vectors are pairwise distinct and satisfy
    	\[
    	s(k^r)=r^2+(N-1).
    	\]
    	Thus \(S_{r^2+(N-1)}\neq\emptyset\) for infinitely many values of \(r\).
    	Consequently, the partial sums
    	\[
    	n_m=\sum_{\ell=1}^m |S_\ell|
    	\]
    	cannot remain bounded, and hence \(n_m\to\infty\).
    	
    	We now define a map
    	\[
    	q:\mathbb N^*\to(\mathbb N^*)^N.
    	\]
    	Given \(j\in\mathbb N^*\), since \(n_m\to\infty\), there exists at least one
    	\(m\ge1\) such that \(j\le n_m\). Let \(m\) be the smallest such integer.
    	Then
    	\[
    	n_{m-1}<j\le n_m.
    	\]
    	In particular, \(S_m\neq\emptyset\), and we define
    	\begin{equation}\label{def:q}
    	q(j):=k^m_{\,j-n_{m-1}}.
    	\end{equation}
    	This gives a well-defined map
    	\[
    	q:\mathbb N^*\longrightarrow(\mathbb N^*)^N,
    	\qquad
    	j\longmapsto q(j).
    	\]
    	
    	We now prove that \(q\) is a bijection. Suppose first that
    	\(q(j_1)=q(j_2)\). Let \(m_1,m_2\) be the corresponding integers such that
    	\[
    	n_{m_1-1}<j_1\le n_{m_1},
    	\qquad
    	n_{m_2-1}<j_2\le n_{m_2}.
    	\]
    	Then
    	\[
    	q(j_1)\in S_{m_1},
    	\qquad
    	q(j_2)\in S_{m_2}.
    	\]
    	Since the sets \(S_m\) are pairwise disjoint and \(q(j_1)=q(j_2)\), we must
    	have \(m_1=m_2\). Within the ordered list of \(S_{m_1}\), the equality
    	\(q(j_1)=q(j_2)\) then implies that the two elements have the same position,
    	namely
    	\[
    	j_1-n_{m_1-1}=j_2-n_{m_1-1}.
    	\]
    	Hence \(j_1=j_2\), and \(q\) is injective.
    	
    	To prove surjectivity, let \(k\in(\mathbb N^*)^N\). Set \(m:=s(k)\). Then
    	\(k\in S_m\), so there exists an index \(r\), with
    	\(1\le r\le |S_m|\), such that
    	\[
    	k=k_r^m.
    	\]
    	By construction,
    	\[
    	q(n_{m-1}+r)=k_r^m=k.
    	\]
    	Therefore \(q\) is surjective. Hence \(q\) is a bijection.
    	
    	Finally, to incorporate the two branches \(\{+,-\}\), we define
    	\[
    	\kappa:\mathbb N^*\to(\mathbb N^*)^N\times\{+,-\}
    	\]
    	by
    	\[
    	\kappa(2j-1):=(q(j),+),
    	\qquad
    	\kappa(2j):=(q(j),-),
    	\qquad j\in\mathbb N^*.
    	\]
    	Since \(q\) is a bijection, it follows immediately that \(\kappa\) is also a
    	bijection. Indeed, every pair \((k,+)\) is attained by \(2q^{-1}(k)-1\), and
    	every pair \((k,-)\) is attained by \(2q^{-1}(k)\).
    	
    	Using \(\kappa\), we may re-index the eigenvalues and eigenfunctions of \(A\) by a single integer. For each \(n \in \mathbb{N}^*\), we define
    	\begin{align*}
    	\lambda_{2n-1}:=\lambda_{q(n)}^{+},
    	\qquad \lambda_{2n}:=\lambda_{q(n)}^{-} \\
    	\phi_{2n-1}:=\phi_{q(n)}^{+} \qquad \phi_{2n}:=\phi_{q(n)}^{-}.
    	\end{align*}
    	Thus, the full spectral family
    	\[
    	\left\{\left(\lambda_k^\pm,\phi_k^\pm\right)\right\}_{k\in(\mathbb N^*)^N}
    	\]
    	is equivalently represented as the one-dimensional sequence
    	\[
    	\{(\lambda_n,\phi_n)\}_{n\in\mathbb N^*}.
    	\]
        
\bibliographystyle{plain}
\bibliography{References}
\end{document}